\numberwithin{equation}{section}
\theoremstyle{plain}
\newtheorem{theorem}{Theorem}
\numberwithin{theorem}{section}
\newtheorem{lemma}[theorem]{Lemma}                              
\newtheorem{proposition}[theorem]{Proposition}
\newtheorem{keyestimate}[theorem]{Key Inequalities}
\newtheorem{corollary}[theorem]{Corollary}
\theoremstyle{definition}
\newtheorem{definition}[theorem]{Definition}
\newtheorem{notation}[theorem]{Notation}
\newtheorem{remark}[theorem]{Remark}
\def \sign { \varsigma}
\def \s {{\sigma}}
\def \b {{\beta}}
\def \R {\mathbb{R}}
\newcommand\Hc{\mathscr{H}}
\newcommand\Lc{\mathscr{L}}
\newcommand\eps{\varepsilon}
\newcommand\internal{f(\partial B)^{*}}
\newcommand\external{f(\partial B)_{*}}
\newcommand\dd{d}
\def \id {{\bf 1}_\mathbb{G}}
\def \R  {{\mathbb {R}}}
\def \x {{\xi}}
\def \eps {{\varepsilon}}
\def \b {{\beta}}
\def \It\^o {It\^o }
\def \s {{\sigma}}
\def \R {{\mathbb {R}}}
\def \N {{\mathbb {N}}}
\def \x {{\xi}}
\def \eps {{\varepsilon}}
\def \tilde {\widetilde}
\def \Ã  {{\`a }}
\def \Ã¨ {{\`e }}
\def \Ã² {{\`o }}
\def \Ã¹ {{\`u }}
\begin{document}

\title{A Yosida's parametrix approach to Varadhan's estimates for a degenerate diffusion under the weak H\"ormander condition
}

\author{
Stefano Pagliarani
\thanks{Dipartimento di Matematica, Universit\`a di Bologna, Bologna, Italy.
\textbf{e-mail}: stefano.pagliarani9@unibo.it.} \and Sergio Polidoro
\thanks{Dipartimento di Scienze Fisiche, Informariche e Matematiche, Universit\`a di Modena e Reggio Emilia, Modena, Italy. \textbf{e-mail}: sergio.polidoro@unimore.it.}}

\date{This version: \today}

\maketitle

\begin{abstract}
We adapt and extend Yosida's parametrix method, originally introduced for the construction of the fundamental solution to a parabolic operator on a Riemannian manifold, to derive Varadhan-type asymptotic estimates for the transition density of a degenerate diffusion under the weak H\"ormander condition. This diffusion process, widely studied by Yor in a series of papers, finds direct application in the study of a class of path-dependent financial derivatives known as Asian options. We obtain the Varadhan formula
\begin{equation}
\frac{-2 \log p(t,x;T,y)  }  { \Psi(t,x;T,y) }   \to 1, \qquad \text{as } \quad T-t \to 0^+,  
\end{equation}
where $p$ denotes the transition density and $\Psi$ denotes the optimal cost function of a deterministic control problem associated to the diffusion. We provide a partial proof of this formula, and present numerical evidence to support the validity of an intermediate inequality that is required to complete the proof. We also derive an asymptotic expansion of the cost function $\Psi$, expressed in terms of elementary functions, which is useful in order to design efficient approximation formulas for the transition density. 
\end{abstract}

\noindent \textbf{Keywords}:  weak H\"ormander condition, asymptotic estimates, parametrix, hypoelliptic diffusion, Asian options.   \\ \noindent

%
%

\section{Introduction}

We consider the system of It\^o stochastic differential equations (SDEs)

\begin{equation} \label{eq-model}
\begin{cases}
\dd X^1_s =  \s X^1_s \dd W_s,\qquad &X^1_t = x_1\\
\dd X^2_s = X^1_s \dd s,\qquad &X^2_t = x_2
\end{cases},
\end{equation}
where $\s$ is a positive constant. The study of this equation is motivated by the financial problem of pricing arithmetically averaged Asian options. Indeed, the process $\left(X_s^1\right)_{s \ge t}$ is a geometric Brownian motion starting from $x_1$ at $s=t$, and describes the evolution of the price of some financial asset in the Black-Scholes setting, while $\left(X_s^2\right)_{s \ge t}$ is the time-integral of $\left(X_s^1\right)_{s \ge t}$ and appears whenever the payoff of the option depends on the past average of the asset price. We refer to Section \ref{sec:asian} below for a more detailed discussion of the related applications.

Although the explicit expression of the solution to \eqref{eq-model} is well known, namely
\begin{equation}\label{eq:solution_exp}
\begin{cases}
X^1_s = x_1 e^{ \sigma (W_s-W_t) - \frac{\s^2}{2}(s-t) } \\
X^2_s = x_2 +  \int_t^s X^1_{\tau} \dd \tau 
\end{cases},\qquad s>t,
\end{equation}
the analytical study of its density is more involved. We recall that Yor provided us with the following closed-form expression for the joint density of the process $X=(X^1,X^2)$ with $\sigma=1$ starting from $(1,0)$ at time $0$:
\begin{equation}\label{e-Yor-density}
 p(s,y_1, y_2) = \frac{e^{\frac{\pi^2}{2s}}}{\pi y_2^2 \sqrt{2 \pi s}}\exp\left( -\frac{1+y_1^2}{2y_2}\right)
q\left(s , \frac{{y_1}}{y_2}\right), 
\end{equation}
for $s,y_1,y_2>0$, where
\begin{equation}\label{psi}
  q\left(s ,  \eta \right)=\int_0^{\infty}e^{-\frac{\xi^2}{2 s}}e^{-\eta\cosh(\xi)}
  \sinh{(\xi)}\sin\left( \frac{\pi \xi}{s} \right)d \xi . 
\end{equation}
This expression was first obtained in \cite{yor2001some}. We also refer to \cite[Theorem 4.1]{matsumoto2005exponential} for a more compact statement and a concise proof. 
Note that the density $p$ is trivially null for non-positive values of $y_1$ and $y_2$, as the process $X^1$ is strictly positive. Furthermore, an elementary change of variable (see \eqref{eq:fund_sol_inv}-\eqref{eq:change_sigma} below) allows to write the transition density $p(t,x_1,x_2;s,y_1, y_2)$ of the process $X=(X^1,X^2)$ for a general $\sigma>0$ starting from $(x_1,x_2)$ at time $t$, which is constantly null if $y_1\leq 0$ or $y_2\leq x_2$.

Although \eqref{e-Yor-density} provides a semi-closed-form expression for the density, the latter is notoriously hard to handle from the numerical and analytical point of view. On the one hand, the numerical integration of the density as written in \eqref{e-Yor-density} is not accurate when the time $s$ is close to $0$, and this results in relevant errors when computing the price of financial options; we refer to Section \ref{sec:asian} for more details and appropriate references about this problem. On the other hand, it is difficult to extract relevant analytical information, such as asymptotic properties of the density for extreme values of $(y_1,y_2)$ and small values of time $s$. We undertake the study of these issues by following a different approach, namely we study the transition density as the fundamental solution of the associated backward Kolmogorov ultra-parabolic operator

%
%
%
%
%
%
%
%
%
%

\begin{equation}\label{eq:operator_L}
\Lc ={\partial_t + x_1\partial_{x_2}}
 + \frac{\s^2 x_1^2}{2} \partial_{x_1 x_1}, \qquad (t,x_1,x_2)\in \R\times D,
\end{equation}
where we set $D: = \R^{+}\times\R$, with $\R^{+}:=]0,+\infty[$. 
The operator $\Lc$ is hypoelliptic in that it can be written in H\"ormander form as 
$\Lc = Y-  \frac{\sigma}{2} Z +  \frac{1}{2} Z^2$
with the vector fields 
\begin{equation}\label{eq:field_Y}
Y = \partial_t + x_1\partial_{x_2},\qquad Z = \s x_1 \partial_{x_1}
\end{equation}
satisfying the H\"ormander condition (see Section \ref{sec:varadhan_estimates}). Note that the commutator $[Z,Y]$ is necessary in order to span the space $\R^3$. In the Probability community this is often referred to as \emph{weak H\"ormander condition}.

In \cite{cibelli2019sharp} the authors showed that the transition density of $(X^1,X^2)$ coincides with the fundamental solution $p=p(t,x;T,y)$ for $\Lc$, and derived upper and lower bounds in terms of the optimal cost function $\Psi$, defined as the solution of the control problem given by:
\begin{equation}\label{eq:control1}
 \Psi(t,x;T,y) = \min_{\omega
}  \int_t^T |\omega(s)|^2 \dd s,
\end{equation}
where the minimum is taken over all controls $\omega\in L^2([t,T]) $ such that the problem
\begin{equation}\label{eq:optimal_curves}
\begin{cases}
\dot{\gamma}_1(s) =\sigma \omega(s) \gamma_1(s)\\
\dot{\gamma}_2(s) = \gamma_1(s)
\end{cases}
t<s<T,\qquad
\text{and}\quad
\begin{cases}
\gamma_1(t) = x_1, \ \gamma_1(T) = y_1\\
\gamma_2(t) = {x_2}, \ \gamma_2(T) = y_2
\end{cases}
\end{equation}
admits a solution. Such a control exists if and only if $x_2<y_2$. As usual in control theory, we agree to set $\Psi(z;w):=+\infty$ whenever $x_2>y_2$, in accordance with the fact that $p(t,x;T,y)$ is null. 

Roughly speaking, the main results in \cite{cibelli2019sharp} are an explicit representation for the cost function $\Psi$ and the following upper-lower bounds:
\begin{equation}\label{eq:cibellietal_simple}
\frac{c}{\s^2 y_1^2(T-t)^2} e^{-\frac{C}{ 2}\Psi(z;w)} \leq p(z;w)  \leq \frac{C}{\s^2 y_1^2(T-t)^2} e^{-\frac{c}{2}\Psi(z;w)}
\end{equation}
for some positive constants $C>>1$ and $c<<1$, and for $z=(t,x),w=(T,y)$ such that $t<T$ and $x_2<y_2$. 
We improve such bounds by proving that the constants appearing at the exponent in both sides of \eqref{eq:cibellietal_simple} are equal to one, namely 
\begin{equation}\label{eq:sim}
\log p(z;w) \sim -\frac{\Psi(z;w)}{2}, \qquad \text{as } \quad T-t \to 0^+,  
\end{equation}
where we agree to let 
\begin{equation}\label{eq:sim_def}
f\sim g\qquad \text{if}\quad \frac{f}{g}\to 1.
\end{equation}

For clarity we state two separate results for the upper and lower upper bounds, respectively. We anticipate that one step of the proof of such estimates relies on some bounds, precisely the Key Inequalities \ref{prop:estimate_CK} below, which we prove numerically in Section \ref{sec:numerical_evidence}. We refer to Section \ref{sec:Yosida_intro} for a preliminary discussion about the interpretation of such inequalities.
\begin{theorem}[Upper bound]\label{th:main}
If the Key Inequalities \ref{prop:estimate_CK} hold, then for any $\tau>0$ there exists a positive constant $C>0$, only dependent on $\tau$ and 
$\sigma$, such that 
\begin{equation}\label{eq:main_estimate}
p(z;w)  \leq C \, \frac{\sqrt{ {\bf h}(z;w)}+{\bf h}(z;w)}{\s^2 y_1^2(T-t)^2} e^{-\frac{1}{2}\Psi(z;w)},
\end{equation}
for any $z=(t,x_1,x_2),w=(T,y_1,y_2)\in \R\times D$ such that $T-\tau<
t<T$ and $x_2<y_2$, where 
\begin{equation}\label{eq:h}
{\bf h}(z;w): = \frac{(T-t)\sqrt{x_1 y_1}}{y_2 - x_2},
\end{equation}
and
$\Psi$ is the cost function defined as the solution of the control problem above, whose explicit representation is given in \eqref{eq:Psi_explicit}.
\end{theorem}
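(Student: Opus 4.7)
The strategy is to implement a Yosida-type parametrix construction tailored to the sub-Riemannian geometry underlying the weak H\"ormander operator $\Lc$. Concretely, fix endpoints $z=(t,x)$ and $w=(T,y)$ with $x_2<y_2$, and let $\gamma^*$ denote the optimal curve for the control problem \eqref{eq:control1}, whose explicit form comes from \eqref{eq:Psi_explicit}. Rather than using a Gaussian centered at the Euclidean segment from $x$ to $y$ (as in Levi's original parametrix), the Yosida parametrix $\tp(z;w)$ is built from the fundamental solution of an operator whose coefficients are frozen along $\gamma^*$. By construction, the leading exponential factor of $\tp$ is $\exp\bigl(-\tfrac{1}{2}\Psi(z;w)\bigr)$ and the prefactor reproduces the degenerate normalization $(\sigma^2 y_1^2 (T-t)^2)^{-1}$ dictated by the two-scale structure of $\Lc$.

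The proof then follows the classical Levi scheme. Writing $p=\tp+\tp\ast\Phi$ with $\Phi=\sum_{k\geq1}(\Lc\tp)^{\ast k}$, where $\ast$ denotes space-time convolution on $\R\times D$, one reduces \eqref{eq:main_estimate} to two pieces: (i) a pointwise bound $\tp(z;w)\leq C(\sigma^2 y_1^2(T-t)^2)^{-1}\exp(-\Psi/2)$, which follows directly from the Yosida construction and the explicit representation of $\Psi$ already available from \cite{cibelli2019sharp}; and (ii) a matching bound on the correction series $\tp\ast\Phi$. The Key Inequalities \ref{prop:estimate_CK} are precisely what drives (ii): they control $\Lc\tp$ in terms of $\tp$ itself together with a polynomial in ${\bf h}(z;w)$, the dimensionless ratio measuring the discrepancy between the intrinsic horizontal scale $(T-t)\sqrt{x_1 y_1}$ and the vertical displacement $y_2-x_2$.

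Once the Key Inequalities are granted, each convolution in the series $\Phi$ will contribute at most a factor of $\sqrt{{\bf h}}+{\bf h}$ multiplied by a constant that decays sufficiently in $k$ (thanks to the small-time factor available when $T-t<\tau$) for the series to sum absolutely on the time interval $(T-\tau,T)$. The factor $\sqrt{{\bf h}}+{\bf h}$ in \eqref{eq:main_estimate} then emerges as the dominant term of this summation. The sharp exponential $e^{-\Psi/2}$ is preserved throughout the iteration because, for Gaussians whose exponents come from the optimal cost, convolution recombines via the dynamic programming identity satisfied by $\Psi$, so that the exponent between the extreme endpoints is exactly $\Psi(z;w)/2$ rather than some fraction of it; this is what promotes the rougher estimate \eqref{eq:cibellietal_simple} to the sharp Varadhan-type bound \eqref{eq:sim}.

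The main obstacle is of course the verification of the Key Inequalities \ref{prop:estimate_CK}, which the authors will justify only numerically. Analytically, the delicate point is that $\Lc\tp$ does not simply reduce to a lower-order perturbation with an explicit small-time factor, as happens in the uniformly parabolic setting: because of the degeneracy and the nonlinearity of the optimal flow \eqref{eq:optimal_curves}, the ratio $(\Lc\tp)/\tp$ depends on ${\bf h}$ in a nontrivial way, and the na\"ive Taylor-expansion argument fails along directions transverse to $\gamma^*$. Quantifying this dependence while keeping the sharp constant $1/2$ in the exponent, rather than some $c<1/2$ as in \cite{cibelli2019sharp}, is the technical heart of the argument and is what forces one to work with the Yosida parametrix anchored at the geodesic $\gamma^*$ instead of a simpler frozen-point parametrix.
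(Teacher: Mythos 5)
Your outline follows the same broad route as the paper --- a parametrix whose exponential factor is $e^{-\Psi/2}$ with the degenerate normalization $(\s^2y_1^2(T-t)^2)^{-1}$, a Levi/Picard iteration $p=\tp+\tp\ast\Phi$, the Key Inequalities to close the convolution estimates, and a small-time gain to sum the series --- but it omits the two mechanisms that make the iteration possible at all, and it misassigns the role of the Key Inequalities. If you apply $\Lc$ directly to a kernel of the form $H_1\propto (T-t)^{-2}y_1^{-2}e^{-\Psi/2}$ you obtain terms of order $(T-t)^{-2}$, namely $\big(\tfrac{x_1}{2}\partial_{x_1}\Psi\big)^2$ and $Y\Psi$ as in \eqref{eq:LH_bis}, plus a term of order $(T-t)^{-1}$; these singularities are not integrable in time, so the series $\sum_k(\Lc\tp)^{\ast k}$ cannot be summed by the na\"ive small-time argument you invoke. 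The paper cancels the $(T-t)^{-2}$ terms through the HJB identity $Y\Psi=\big(\tfrac{x_1}{2}\partial_{x_1}\Psi\big)^2$ (Lemma \ref{lem:HJB_eq}), and removes the residual $(T-t)^{-1}$ singularity by taking $H={\bf u}H_1$ with ${\bf u}$ solving the transport equation \eqref{eq:transport} along the optimal trajectories (Lemma \ref{lem:radial_field}), computed explicitly and estimated in Proposition \ref{th:u}; only then does one get the rigorous input $|\Lc H|\le \kappa\sqrt{{\bf h}}\,H_1$ for the iteration. Neither step appears in your proposal, and your description of the parametrix as the fundamental solution of an operator ``frozen along $\gamma^*$'' is not the paper's construction and would not, as stated, produce the exact exponent $\Psi/2$ nor the cancellation above.

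Moreover, the Key Inequalities \ref{prop:estimate_CK} do not ``control $\Lc\tp$ in terms of $\tp$'' --- that control is proved, as just described. They are Chapman--Kolmogorov-type reproducing bounds for the weighted kernel, schematically $\int \sqrt{{\bf h}}\,H_1\cdot\sqrt{{\bf h}}\,H_1\,\dd\xi\le C_\tau\sqrt{{\bf h}}\,H_1$ (and the analogue with $\sqrt{{\bf h}}+{\bf h}$), i.e.\ exactly the ``convolution recombines via dynamic programming'' step that you treat as automatic; treating it as automatic is precisely the gap the paper cannot close analytically and must assume. Relatedly, the factor $\sqrt{{\bf h}}+{\bf h}$ in \eqref{eq:main_estimate} comes primarily from the bound \eqref{eq:est_u} on the correction ${\bf u}$, hence from the parametrix $H={\bf u}H_1$ itself, and not only from summing the correction series as you claim; and to conclude one must also verify that the constructed $p$ is indeed the fundamental solution (Dirac-delta property via Proposition \ref{prop:delta_Dirac}, the distributional identity and hypoellipticity in Theorem \ref{th:convergence}), a step absent from your outline.
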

\begin{theorem}[Lower bound]\label{th:lower}
If the Key Inequalities \ref{prop:estimate_CK} hold, then for any $\eps,\kappa>0$ 
 there exists $\tau>0$, only dependent on $\eps$, $\kappa$ and 
$\sigma$, such that 
\begin{equation}\label{eq:main_estimate_lower}
p(z;w)  \geq (1-\eps) \, \frac{{\bf h}(z;w)}{\s^2 y_1^2(T-t)^2} e^{-\frac{1}{2}\Psi(z;w)},
\end{equation}
for any $z=(t,x_1,x_2),w=(T,y_1,y_2)\in \R\times D$ such that $T-\tau<
t<T$, $x_2<y_2$ and $\frac{1}{\kappa}\leq  \frac{\sqrt{x_1 y_1}}{y_2 - x_2} < \kappa $.
\end{theorem}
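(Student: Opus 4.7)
The strategy follows the Yosida parametrix framework underlying Theorem \ref{th:main}, but requires a finer control on the error in order to match the constant in the exponent. Throughout we work in the prescribed regime $\frac{1}{\kappa}\leq \frac{\sqrt{x_1 y_1}}{y_2-x_2}<\kappa$, so that $\mathbf{h}(z;w)$ is comparable to the elapsed time $T-t$. The starting point is the Volterra representation
\begin{equation}
p(z;w) \;=\; \tp(z;w) + (\tp \otimes \Phi)(z;w), \qquad \Phi = \sum_{k\geq 1}(\Lc\tp)^{\otimes k},
\end{equation}
where $\tp$ is the Yosida parametrix obtained by freezing the coefficients of $\Lc$ along the optimal deterministic trajectory of \eqref{eq:optimal_curves}, and $\otimes$ denotes the usual space--time convolution.

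The proof would split into two complementary estimates. First, a sharp lower bound on the parametrix: using the explicit expression of $\Psi$ in \eqref{eq:Psi_explicit} together with the fact that $\tp$ is essentially a local Gaussian obtained by linearizing $\Lc$ about the optimal path, one would prove
\begin{equation}
\tp(z;w) \;\geq\; (1-\tfrac{\eps}{2})\, \frac{\mathbf{h}(z;w)}{\s^2 y_1^2 (T-t)^2}\, e^{-\frac{1}{2}\Psi(z;w)},
\end{equation}
for $T-t<\tau_1=\tau_1(\eps,\kappa,\sigma)$. The prefactor $\mathbf{h}(z;w)$ emerges as the leading order of the Jacobian of the exponential map at the optimal control, and the two-sided constraint on $\sqrt{x_1 y_1}/(y_2-x_2)$ is precisely what prevents this Jacobian from degenerating or blowing up. Second, the Key Inequalities \ref{prop:estimate_CK} supply a pointwise upper bound on $\Lc \tp$ with a mild time-singularity; iterating as in the proof of Theorem \ref{th:main} and exploiting the subadditivity property $\Psi(z;w)\leq \Psi(z;z')+\Psi(z';w)$ (inherited from the concatenation of admissible controls) to factor out the exponential, one should produce
\begin{equation}
|(\tp \otimes \Phi)(z;w)| \;\leq\; C\,(T-t)^{\alpha}\, \frac{\mathbf{h}(z;w)}{\s^2 y_1^2 (T-t)^2}\, e^{-\frac{1}{2}\Psi(z;w)},
\end{equation}
for some $\alpha>0$. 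Choosing $\tau_2$ with $C\tau_2^{\alpha}<\eps/2$ and setting $\tau:=\min\{\tau_1,\tau_2\}$ then yields the claim via the Volterra identity, since $p\geq \tp - |\tp\otimes\Phi|$.

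The main obstacle lies in the second estimate: the convolution ranges over intermediate points $z'$ that may well leave the region in which the sharp parametrix asymptotics are available, so the bound above cannot be obtained by plugging in sharp estimates naively. My plan is to partition the integration domain into an inner region close to the optimal trajectory, on which one uses the sharp parametrix estimates, and an outer region, on which one falls back to the cruder exponential bounds from Theorem \ref{th:main} and \cite{cibelli2019sharp}. The concentration of $\tp$ along the optimal path, combined with the time-decay supplied by the Key Inequalities, is what forces the outer contribution to vanish at the required rate as $T-t\to 0^+$; this is the step where a precise quantitative form of the Key Inequalities is indispensable, and where most of the work of the proof will be spent.
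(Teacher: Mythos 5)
Your overall skeleton---the parametrix identity \eqref{eq:ansaz_p}, a sharp lower bound on the parametrix, and an error bound that is $O\big((T-t)^\alpha\big)$ relative to the main term, then choosing $\tau$ small---is exactly the paper's route (the paper deduces the theorem in a few lines from Theorem \ref{th:convergence}). But both of your key steps, as described, have genuine gaps. First, the parametrix cannot be ``obtained by freezing the coefficients of $\Lc$ along the optimal trajectory'': in the constrained region $\frac1\kappa\le\sqrt{x_1y_1}/(y_2-x_2)<\kappa$ the ratio $x_1/y_1$ is unconstrained, so $\Psi$ can be arbitrarily large, and a frozen (Gaussian) kernel has exponent $\Psi_{\sigma y_1}$, which matches $\Psi$ only near the optimal path (see \eqref{eq:taylor_Psi}); your claimed bound $\tp\ge(1-\eps/2)\,{\bf h}\,e^{-\Psi/2}/(\s^2y_1^2(T-t)^2)$ does not follow from it. The paper's parametrix is $H={\bf u}H_1$ with the \emph{true} cost $\Psi$ in the exponent, and the factor ${\bf h}$ in the lower bound is not a Jacobian heuristic but the explicit estimate on the correction ${\bf u}$ in Proposition \ref{th:u} (see \eqref{eq:est_u}), usable precisely because the constraint forces ${\bf h}(z;w)\le\kappa(T-t)\le1$ for small $T-t$.

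Second, you misstate the role of the Key Inequalities \ref{prop:estimate_CK} and of subadditivity. The Key Inequalities are not pointwise bounds on $\Lc\tp$; the pointwise bound $|\Lc H|\le \mathrm{const}\,\sqrt{{\bf h}}\,H_1$ comes from the HJB cancellation of the $1/(T-t)^2$ terms (Lemma \ref{lem:HJB_eq}) together with the transport equation \eqref{eq:transport} defining ${\bf u}$, which removes the residual $1/(T-t)$ singularity---without this construction $\Lc$ applied to a naive parametrix is genuinely singular. The Key Inequalities are Chapman--Kolmogorov-type \emph{convolution} bounds, and they are exactly what your proposed use of $\Psi(z;w)\le\Psi(z;z')+\Psi(z';w)$ cannot replace: subadditivity factors out the exponential but leaves the spatial integral of the singular prefactors, which diverges; this convolution estimate is precisely the step the paper cannot prove and must assume. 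Granting it, Theorem \ref{th:convergence} yields the \emph{global} bound \eqref{eq:bound_conv}, $|H\otimes\Phi|\le C(T-t)\big(\sqrt{{\bf h}}+{\bf h}\big)H_1$ for all $z\prec w$, which under the constraint is $O(\sqrt{T-t})\,{\bf h}H_1$---so no inner/outer decomposition is needed. Moreover your outer-region fallback to the non-sharp bounds of \cite{cibelli2019sharp} would not close the argument: an error of size $(T-t)^{\alpha}e^{-c\Psi/2}$ with $c<1$ dominates the main term $e^{-\Psi/2}$ as soon as $\Psi\gg\alpha\log\frac1{T-t}$, and such values of $\Psi$ do occur in the constrained region.
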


The results above are obtained by adapting to the strictly hypoelliptic setting the Yosida's parametrix method (\cite{yosida1953fundamental}) for the construction of the fundamental solution to parabolic-type operators. Specifically, we use the cost function $\Psi$ to define a parametrix function. 
We believe that this method might be of separate interest and we refer to Section \ref{sec:Yosida_intro} for further details and related references. 

The interest in the results above is duplex. From the theoretical point of view, they provide Varadhan-type estimates for a degenerate operator satisfying a weak H\"ormander condition. A detailed discussion about this aspect is deferred to Section \ref{sec:varadhan_estimates} below. Secondly, knowing the sharp choice of the constant in the exponent paves the way for developing numerically tractable asymptotic expansions of the transition density: we discuss this possible application in Section \ref{sec:asian}. 

With regard to the latter point, the computational aspects of the cost function $\Psi$ are also important. In \cite{cibelli2019sharp} the Authors solved the control problem \eqref{eq:control1}-\eqref{eq:optimal_curves} and provided an explicit representation of $\Psi$, which we report in Section \ref{sec:preliminaries}. Tough exact, such expression involves the inverse of hyperbolic trigonometric functions. In Section \ref{sec:novel_repres} we derive an expansion for the the cost function $\Psi$ whose terms only contain elementary functions, namely
\begin{align}
\Psi(z;\zeta) = \frac{4}{\s^2(T-t)}\bigg[ &{\bf h}(z,w)\bigg(\sqrt{\frac{y_1}{x_1}} + \sqrt{\frac{x_1}{y_1}} -2\bigg)  \\
& +  \sum_{n=2}^{\infty}   a_n {\bf 1}_{[1,\infty[} \big({\bf h}(z,w)\big)  \frac{ \big({\bf h}(z,w) -1\big)^n}{\big({\bf h}(z,w)\big)^{n-1} } +b_n {\bf 1}_{]0,1[}\big({\bf h}(z,w)\big)  \frac{ \big(-\log {\bf h}(z,w) \big)^n}{\big(1-\log {\bf h}(z,w)\big)^{n-2}}    \bigg],\\ \label{eq:rep_Psi_intro}
\end{align}
with ${\bf h}(z,w)$ as defined in \eqref{eq:h}, and where the coefficients $a_n,b_n$ can be computed recursively (see Proposition \ref{prop:coeff_a_b}). 
A remarkable feature of such expansion is that it seems both point-wise and asymptotically convergent, in the sense that it approximates the asymptotic behavior of $\Psi(z;\zeta)$ as ${\bf h}(z,w)$ approaches $0$ and $+\infty$. In Section \ref{sec:novel_repres} we managed to prove point-wise convergence for ${\bf h}(z;\zeta)>0$ and asymptotic convergence as ${\bf h}(z,w)\to +\infty$.

\vspace{4pt}
After Sections \ref{sec:varadhan_estimates}, \ref{sec:Yosida_intro} and \ref{sec:asian} below, the remainder of the paper unfolds as follows. Section \ref{sec:preliminaries} contains some preliminaries about the cost function $\Psi$ and the fundamental solution of $\Lc$. In Section \ref{sec:novel_repres} we derive the representation \eqref{eq:rep_Psi_intro} and provide a partial proof of convergence. In Section \ref{sec:yosida} we prove Theorems \ref{th:main} and \ref{th:lower} by extending Yosida's parametrix method. Section \ref{sec:numerical_evidence} contains the numerical evidence supporting the validity of the Key Inequalities \ref{prop:estimate_CK}. Appendices \ref{sec:app_top_lemma} and \ref{sec:proof_lemma_HJB} contain, respectively, a topological lemma needed to prove the results of Section \ref{sec:novel_repres} and the proof of Lemma \ref{lem:HJB_eq} appearing in Section \ref{sec:yosida}.

\subsection{Varadhan-type estimates}\label{sec:varadhan_estimates}

In order to firm our results within the existing literature about fundamental solution, and density, estimates, let us consider the non-divergence-form second-order differential operator 
\begin{equation}\label{eq:opH}
\Hc =  \frac{1}{2} \sum_{i,j=1}^{n} a_{ij} (t,x) \partial_{x_i x_j} + \sum_{i=1}^{n} \mu_{i} (t,x) \partial_{x_i} + \partial_t,\qquad (t,x) \in \R \times \tilde D,
\end{equation}
with $\tilde{D}$ being a domain of $\R^n$. Clearly, the operator $\Lc$ in \eqref{eq:operator_L} is a particular instance of \eqref{eq:opH}. From the probabilistic stand-point, under suitable assumptions, $\Hc $ is the extended generator of a solution to the following It\^o SDE:
\begin{equation} \label{eq-model_gen}
\dd X_s =  \mu(t,X_s) \dd s + \s(t,X_s)  \dd W_s,\qquad 
X_t = x , 
\end{equation}
with $W$ being a $n$-dimensional Brownian motion, and $\sigma$ a $n\times n$ matrix such that $\sigma \sigma^\top = a= (a_{i,j})_{i,j=1,\cdots,n}$. Also, the transition density of $X$, hereafter denoted by $p=p(t,x;T,y)$, coincides with the fundamental solution of $\Hc$.

In the uniformly parabolic case, a fairly general classical result  (see \cite{friedman-parabolic}) states that
\begin{equation}\label{eq:estimates_parametrix_parabolic}
\frac{c}{(T-t)^{n/2}} e^{-\frac{C|y-x|^2}{2 (T-t)}} \leq p(t,x;T,y)  \leq \frac{C}{(T-t)^{n/2}} e^{-\frac{c|y-x|^2}{2 (T-t)}}, \qquad t<T, \quad x,y\in \R^n,
\end{equation}
where $c,C$ are positive constants independent of $(t,x),(T,y)$. These estimates can be proved under boundedness and H\"older regularity assumptions on the coefficients of $\Hc$, and assuming a uniform ellipticity condition on the second-order coefficients $a_{ij}$. More precisely, the upper-bound above can be obtained by means of the so-called \emph{parametrix method}. With the same method it is possible to prove the lower bound in \eqref{eq:estimates_parametrix_parabolic}, but only locally in space: the global version can be achieved with the method of the Harnack chains, introduced by Aronson in \cite{aronson1967bounds}, which is based on the Harnack inequality.  Though very general, the bounds in \eqref{eq:estimates_parametrix_parabolic} are not sharp enough to characterize the asymptotic behavior for small times of the fundamental solution $p$ away from the pole. 
In the groundbreaking paper \cite{varadhan1967behavior} Varadhan proved, for $\mu\equiv 0$ and $a(t,x)= a(x)$, 
that 
\begin{equation}\label{eq:varadhan}
\lim_{T-t\to 0^+}2 (T-t)\, {\log p(t,x;T,y)} = - d^2 (x,y),
\end{equation}
uniformly with respect to $x,y$ over a compact subset of $\R^n$, where $d$ represents the geodesic distance with respect to the Riemann metric $a^{-1}(x) dx_i dx_j  $, namely
\begin{equation}\label{eq:riemann_distance}
d(x,y) = \inf_{\gamma: \gamma(0)=x, \, \gamma(1)=y} \int_{0}^{1}  \sqrt{ \big\langle a^{-1}\big(\gamma(s)\big) \dot\gamma(s) , \dot\gamma(s) \big\rangle }\, ds.
\end{equation}
The assumptions in \cite{varadhan1967behavior} are, again, uniform ellipticity and H\"older continuity for the second-order coefficients. Note that \eqref{eq:varadhan} is more precise than \eqref{eq:estimates_parametrix_parabolic}, in that it yields the exact asymptotic behavior of the logarithm of $p$ for small times. In the subsequent paper \cite{varadhan1967diffusion}, building upon \eqref{eq:varadhan}, Varadhan established a large-deviation principle for elliptic diffusions of the form \eqref{eq-model_gen}. The cornerstone contributions \cite{varadhan1967behavior}-\cite{varadhan1967diffusion} initiated a whole stream of literature, dealing with asymptotic expansions of the transition densities (or heat-kernels in the context of PDEs) of diffusion processes on Riemannian manifolds. Such expansions aim at characterizing the full asymptotic behavior of $p(t,x;T,y)$, under various assumptions on the underlying geometry, by adding up additional terms to the leading one given by \eqref{eq:varadhan}. For instance, following the WKB (Wentzel-Kramers-Brillouin) method, one seeks representations of the type
\begin{equation}\label{eq:full_exp}
p(t,x;T,y) = \frac{\exp\Big(\! -\frac{d^2(x,y)}{2(T-t)} \Big) }{(T-t)^{n/2}}
\Big(\alpha_0(x,y) + (T-t) \alpha_1(x,y) + (T-t)^2 \alpha_2(x,y) + \cdots \Big).
\end{equation}
We refer, for instance, to \cite{molchanov1975diffusion} and \cite{MR770974} for some relevant contributions to this field, which mainly developed in the years 1970s and 1980s. 
 We also mention the important contribution of Freidlin and Wentzell (e.g. \cite{ventsel1970small}) to the subject of large deviations for diffusion processes. In the last two decades, these asymptotic techniques were employed for the study of volatility models in mathematical finance (see \cite{friz2015large} and the references therein).

In the late 1980s these results were generalized to the hypoelliptic setting, under the so-called \emph{strong H\"ormander condition}. To explain these generalizations, it is useful to write the operator $\Hc$ above (with time-independent coefficients) in H\"ormander form, namely
\begin{equation} \label{e-heatkernel}
\Hc =\partial_t + Z_0 + \frac{1}{2} \sum_{j=1}^m Z_j^2,
\end{equation}
where $Z=(Z_0,Z_1, \dots, Z_m)$ is a system of 
vector fields defined on a domain $\tilde{D} \subset \R^n$ satisfying the strong H\"ormander's condition
\begin{equation}\label{strong_Hor}
 {\rm Lie} (Z_1, \ldots, Z_m)(x) = \R^n, \qquad x \in \tilde D,
\end{equation}
where ${\rm Lie} (Z_1, \ldots, Z_m)$ is the Lie algebra generated by the vector fields $Z_1, \ldots, Z_m$, namely the vector space spanned by $Z_1, \ldots, Z_m$ and their commutators. Note that, if $\Hc $ represents the generator of $X$ in \eqref{eq-model_gen}, then the vector field $Z_j$, $j=1,\cdots,m$, is exactly identified by the $j$-th column of the diffusion matrix $\sigma$. 
Under the assumption of smooth vector fields, Leandre (\cite{leandre1987majoration}, \cite{leandre1987minoration}) proved that \eqref{eq:varadhan} remains valid in this setting, with $d(x,t)$ being the Carnot-Caratheodory \emph{sub-Riemannian} metric induced by the vector fields $Z_1, \ldots, Z_m$. In \cite{ben1988developpement} also \eqref{eq:full_exp} was extended to this setting.
 A contribution in this direction was also given in \cite{de2018local}.
 
In order to discuss extensions to strictly hypoelliptic settings (when \eqref{strong_Hor} fails), it is crucial the following
\begin{remark}\label{rem:varadhan_cost_func}
Both in the Riemannian and sub-Riemannian case, \eqref{eq:varadhan} can also be written in the form of \eqref{eq:sim}, where $\Psi$ represents here the solution of the optimal control problem given by \eqref{eq:control1}, with the minimum taken over all controls $\omega\in L^2([t,T])$ such that the problem
\begin{equation}\label{eq:optimal_curves_gen}
\begin{cases}
\dot{\gamma}(s) = \omega(s) \sigma \big(\gamma(s)\big),\quad
t<s<T,\\
\gamma(t) = x, \ \gamma(T) = y
\end{cases}
\end{equation}
admits a solution. Indeed,
it is a standard result (see for instance \cite[Lemma 2.2]{varadhan1967diffusion} for the elliptic case) that 
\begin{equation}\label{eq:rel_distance_Psi}
\frac{d^2(x,y)}{T-t} = \Psi(t,x;T,y).
\end{equation}
\end{remark}
Under the so-called \emph{weak H\"ormander} condition, namely 
\begin{equation}\label{eq:weak_Hor}
{\rm Lie} (\partial_t + Z_0, Z_1, \ldots, Z_m)(x) = \R^{n+1}, \qquad x \in \tilde D,
\end{equation}
the hypoellipticity of the operator $\Hc$ is preserved, but 
the second-order vector fields $Z_1, \ldots, Z_m$ and their commutators are no longer enough to span the space $\R^n$. Therefore, there is no sub-Riemannian metric on $\R^n$ and an estimate in the form of  \eqref{eq:varadhan} can be no longer achieved. 
The cost function $\Psi$, however, remains well defined as the solution of the same control problem \eqref{eq:control1} where 
\begin{equation}\label{eq:optimal_curves_gen_bis}
\begin{cases}
\dot{\gamma}(s) = \mu\big(  \gamma(s)  \big) +  \omega(s) \sigma \big(\gamma(s)\big),\quad
t<s<T,\\
\gamma(t) = x, \ \gamma(T) = y
\end{cases}
\end{equation}
replaces \eqref{eq:optimal_curves_gen}.   Therefore, in light of Remark \ref{rem:varadhan_cost_func}, \eqref{eq:sim} appears as the natural generalization of \eqref{eq:varadhan} to strictly hypoelliptic settings. 
In this sense, Theorems \ref{th:main} and \ref{th:lower} above, which lead to \eqref{eq:sim}, can be viewed as Varadhan-type estimates for the degenerate parabolic operator $\Lc$. As already mentioned above, \eqref{eq:sim} sharpens the estimates proved in \cite{cibelli2019sharp} for the fundamental solution of $\Lc$. Furthermore, we are not aware of other Varadhan-type formulas in the context of hypoelliptic operators under the weak H\"ormander condition.

Note that the drift coefficient $\mu$ in \eqref{eq:optimal_curves_gen_bis}, which is not controlled, is needed to ensure the existence of a path connecting $x$ and $y$. 
Also, the fact that \eqref{eq:rel_distance_Psi} does not hold in general is evident as $\Psi(t,x;T,y)$ can exhibit different rates of explosion, as $T-t \to 0^+$, depending on the choice of $(x,y)$. This is a well-known phenomenom, which reflects the different time-scales of the single components of the underlying diffusion. A stylized example is given by the stochastic Langevin equation 
\begin{equation} \label{eq-model_Langevin}
\begin{cases}
\dd X^1_s = \sigma  \dd W_s,\qquad &X^1_t = x_1\\
\dd X^2_s = X^1_s \dd s,\qquad &X^2_t = x_2
\end{cases}
\end{equation}
with $\sigma$ positive constant,
whose generator is given by 
\begin{equation}\label{eq:kolmogorov_constant}
\Hc = {\partial_t + x_1\partial_{x_2}}
 + \frac{a}{2} \partial_{x_1 x_1}, \qquad a=\sigma^2.
\end{equation}
Its fundamental solution is given exactly by 
 \begin{align}\label{eq:density_langevin}
p_{\sigma}(t,x;T,y)  &=  \frac{1}{2\pi \sqrt{\text{det}\, {\bf C}(\sigma ,T-t)  }}\exp\bigg({ -\frac{1}{2}\Psi_{\sigma}(t,x;T,y)} \bigg),  \\
{\bf C}(\sigma,s) &:= \sigma^2 \begin{pmatrix}
    s & - \frac{s^2}{2} \\
    - \frac{s^2}{2} & \frac{s^3}{3} \
  \end{pmatrix}, \label{eq:matrix_C_intro}
\end{align}
where  
 \begin{align} \label{eq:quadratic_form}
\Psi_{\sigma}(t,x;T,y) & =   \big\langle {\bf C}^{-1}(\sigma,T-t) \big( x_1 - y_1, x_2 - y_2 + (T-t) y_1 \big)      ,  \big(  x_1 - y_1 , x_2 - y_2 + (T-t) y_1  \big)  \big\rangle   \\
 & = \frac{3 \big(2 (y_2 -  x_2) - (T-t) (x_1+y_1)\big)^2}{\sigma^2(T-t)^3}+\frac{(y_1-x_1)^2}{\sigma^2(T-t)}
\end{align}
is the cost function of the associated control problem (see for instance \cite[Example 9.53]{pascucci2011pde}).

%
%

\subsection{Yosida's parametrix}\label{sec:Yosida_intro}

In order to obtain the bounds in Theorems \ref{th:main} and \ref{th:lower}, we adapt and extend the method introduced by Yosida in his seminal paper \cite{yosida1953fundamental}, where he outlined a geometrical variation of the classical \emph{Levi's parametrix method} for the construction of the fundamental solution of a parabolic-type operator $\Hc$ of type \eqref{eq:opH} on a Riemannian manifold. Such method, which was published 14 years before Varadhan's paper \cite{varadhan1967behavior}, already contained the idea that an expansion like \eqref{eq:full_exp} should hold, and in particular that the logarithm of the fundamental solution $p$ should asymptotically behave like in \eqref{eq:varadhan}. In spite of this, Yosida's method did not become a standard tool in the study of the asymptotic properties of transition densities. We suspect this is partially due the fact that his construction was mainly heuristic: no precise assumptions on the coefficients were given to ensure convergence, nor upper/lower bounds leading to \eqref{eq:varadhan} were proved in \cite{yosida1953fundamental}. In \cite{gatheral2012asymptotics}, Gatheral et al. brought Yosida's method to the attention of the mathematical finance community emphasizing its versatility from the computational point of view, with a particular focus on its ability to deal with time-dependent coefficients. In this paper, inspired by \cite{gatheral2012asymptotics}, we push Yosida's method one step further and adapt it to the strictly hypoelliptic setting. In Section \ref{sec:yosida} we perform the construction of the fundamental solution $p$ for the operator $\Lc$ in \eqref{eq:operator_L}, and prove the estimates \eqref{eq:main_estimate}-\eqref{eq:main_estimate_lower} which lead to \eqref{eq:sim}. Although we treat here a special case, we claim that the principles of this approach can be employed in general for a wider class of degenerate parabolic operators under the weak H\"ormander condition, to establish full expansions analogous to \eqref{eq:full_exp}. The study of general assumptions on the coefficients, possibly time-dependent, under which this method yields 
rigorous results is subject of ongoing investigation. 

Consider the parabolic operator $\Hc$ in \eqref{eq:opH}, and assume for simplicity 
$\mu\equiv 0$ and $a_{ij}(t,x)=a(x)$. In the Yosida's parametrix, the key difference with respect to Levi's original method lies in the choice of the \emph{parametrix function} (the starting point of the iterative construction), which is dependent on the geodesic distance $d$ induced by the second-order coefficients of the differential operator. Precisely, in Yosida's method the kernel that defines the parametrix function is set as 
\begin{equation}\label{eq:param_yos}
\frac{(T-t)^{-n/2}}{ \sqrt{ (2\pi)^n\, \text{det}\, a(y)}}\exp\bigg( {-\frac{1}{2}\frac{d^2(x,y)}{T-t}} \bigg),
\end{equation}
where $d$ is the Riemannian distance \eqref{eq:riemann_distance} induced by the coefficients $a_{ij}$. By opposite, in Levi's method the parametrix function 
is the fundamental solution of the constant coefficients operator obtained from $\Hc$ by freezing the coefficients at $y$. In other words, the distance $d$ in \eqref{eq:param_yos} is replaced by the geodesic distance taken with respect to the constant metric tensor $a^{-1}(y) dx_i dx_j$, namely the kernel is
\begin{equation}
\frac{(T-t)^{-n/2}}{ \sqrt{(2\pi)^n\, \text{det}\, a(y)}}\exp\bigg( {-\frac{1}{2}\frac{d_y^2(x,y)}{T-t}} \bigg), \qquad \text{where }\ d^2_y(x,y) = \langle  a^{-1}(y)(y-x), y-x  \rangle. 
\end{equation}
The idea is that the choice \eqref{eq:param_yos} provides us with sharp asymptotic estimates of the fundamental solution for small times, while preserving the correct behavior of the kernel near the singularity (the starting point of the stochastic process). As already mentioned in the previous subsection, the second-order coefficients of the operator $\Lc$ do not induce a distance on $\R^2$. However, proceeding in analogy with \eqref{eq:param_yos}-\eqref{eq:rel_distance_Psi}, our idea is to consider the following kernel for the parametrix function:  
\begin{equation}\label{eq:H1_intro}
H_1(t,x;T,y):= \frac{1}{2\pi \sqrt{\text{det}\, {\bf C}(\s y_1,T-t)  }}\exp\bigg({ -\frac{1}{2}\Psi(t,x;T,y)} \bigg),
\end{equation}
where the matrix ${\bf C}$ is set as in \eqref{eq:matrix_C_intro} 
in order to yield the right behavior near the singularity. In particular, the choice for ${\bf C}$ can be explained as follows. First note that 
the integral curve of the vector field $Y$ in \eqref{eq:field_Y}, starting at $z=(t,x)\in \R\times D$, 
is given by
\begin{equation}
e^{\delta Y} z = (t + \delta, x_1, x_2+ \delta x_1),
\end{equation}
which yields (just set $\omega\equiv 0$ in \eqref{eq:optimal_curves})
\begin{equation}
\Psi\big(t, y_1 ,  y_2 - (T-t) y_1; T , y_1, y_2\big) = 0, \qquad T-t>0,\ y\in D.
\end{equation}
As it turns out, expanding now $\Psi(t,\cdot;T,y)$ around $(y_1 ,  y_2 - (T-t) y_1)$, at second order, one obtains 
\begin{equation}\label{eq:taylor_Psi}
\Psi(t,x;T,y) = 
\Psi_{\sigma  y_1}(t,x;T,y) 
+ o\big(   |x_1 - y_1|^2 + |x_2 - y_2 + (T-t) y_1|^2  \big), \qquad \text{as } (x_1,x_2) \to  (y_1 ,  y_2 - (T-t) y_1),
\end{equation}
where $\Psi_{\sigma y_1}(t,\cdot;T,y)$ is the quadratic form in \eqref{eq:quadratic_form}. Therefore, the term 
$$1/ \sqrt{\text{det}\, {\bf C}(\s y_1,T-t)  }$$ in \eqref{eq:H1_intro} ensures a Gaussian behavior for $H_1$ near the diagonal. In particular, one can show (see Proposition \ref{prop:delta_Dirac}) that $H(t,x;T,y)\to \delta_x$ as $t\to T^-$. Note that, replacing $\Psi$ with 
$\Psi_{\sigma y_1}$ in \eqref{eq:H1_intro}, the function $H_1$ becomes the fundamental solution of the operator $\Hc$ in \eqref{eq:kolmogorov_constant} with $a = \sigma^2 y^2_1$. Such a kernel can be seen as the counterpart of Levi's parametrix kernel for degenerate \eqref{eq:kolmogorov_constant}-like operators with variable coefficients, and allows to prove (see \cite{polidoro1994class}, \cite{francesco2005class}) Gaussian bounds for the fundamental solution when $a$ is a positive uniformly H\"older-continuous function, which is bonded and bounded away from zero.

The parametrix method is an iterative procedure to constructs the fundamental solution taking a parametrix function as a leading term, and then representing the remainder as a series whose terms can be recursively determined via successive approximations based on Duhamel's principle. A key element to prove the convergence of the series consists in estimating the convolution of the parametrix function with itself. In the original Levi's method for parabolic operators, the parametrix is a Gaussian function and thus one can rely on the Chapman-Kolmogorov equation. In the case of Yosida's parametrix, again in the parabolic case, a locally isometric change of coordinates can transform the kernel \eqref{eq:param_yos} into a Gaussian function and thus the convolutions can be estimated once more by means of the Chapman-Kolmogorov identity. In our case, the bounds we need in order to complete the parametrix construction are those in the Key Inequalities \ref{prop:estimate_CK}, which are estimates that look roughly like
\begin{equation}
\int_{\R^+} \int_{x_2}^{y_2} H_1(t,x;s,\xi) H_1(s,\xi;T,y)  \dd \xi  \leq C_{T-t} H_1(t,x;T,y),\qquad s\in ]t,T[,
\end{equation}
uniformly in $x,y\in ]0,+\infty[\times \R$ with $x_2<y_2$. The fact that this bound holds locally in $x,y$ is clear as $H_1$ tends to a Dirac delta for small times. At the moment we only have numerical evidence (reported in Section \ref{sec:numerical_evidence}) for the fact that it holds uniformly. The techniques used in the Riemannian and sub-Riemannian case to prove the Chapman-Kolmogorov identity seem to fail here. Therefore, although the numerical evidence reported in Section \ref{sec:numerical_evidence} strongly supports the validity of such estimates, the problem of providing a rigorous proof remains an open problem.


\subsection{Application to arithmetic Asian options}\label{sec:asian}

The It\^o process in \eqref{eq:solution_exp} finds a direct application in the problem of pricing and hedging of a class of path-dependent financial derivatives known as arithmetic Asian options. The first component $X^1$, which is a geometric Brownian motion, denotes the evolution price of a risky asset in the well-known Black-Scholes model under the risk-neutral probability measure. For simplicity, it is assumed here zero risk-free interest rate so that the risk-neutral dynamics of the asset is exactly given by \eqref{eq-model}, namely $X^1$ is an exponential Brownian martingale. The process $s\mapsto \frac{1}{s} X^2_s$ represents instead the continuous arithmetic time-average of the risky asset $X^1$. An arithmetic average Asian option is a random variable of the form
\begin{equation}
\varphi\Big( X^1_T, \frac{1}{T} X^2_T\Big),
\end{equation}
where $\varphi$ is a payoff function that determines the value of the financial claim at a given maturity $T>0$. A variety of payoff functions can be considered, some popular choices being for example
\begin{align}
\varphi(x_1,x_2) &= (x_1 - x_2)^+, && \text{(floating-strike Call)}      \\
\varphi(x_1,x_2) &= (x_2 - K)^+ , \quad K>0.  &&   \text{(fixed-strike Call)}
\end{align}
Within the theory of continuous-time arbitrage pricing, the no-arbitrage price at time $t<T$ of the such derivatives, given the initial values $x=(x_1,x_2)$ for $X=(X^1,X^2)$, are given by the risk-neutral evaluation 
\begin{equation}\label{eq:pricing}
\mathbb{E}_{t,x}\big[\varphi\big( X^1_T,  X^2_T / T\big)\big] = \int_{\R_{>0}} \int_{x_2}^{+\infty}  p(t,x_1,x_2;T,y_1,y_2) \varphi\big( y_1,  y_2 / T\big)    \dd y_2 \dd y_1 .
\end{equation}
While the expected value above determines the price of the claim, its derivatives with respect to the variables $t,x,T$ and the parameter $\sigma$, also known as sensitivities, are related to the hedging strategies of such claim. 
In the evaluation of \eqref{eq:pricing} one encounters computational difficulties due the involved expression of the transition density $p$. Intuitively, this is largely due to the fact that the problem is somehow \emph{ill-posed}, meaning that $X^2$ is an arithmetic average of a geometric Brownian motion. For instance, the integral representation \eqref{e-Yor-density} given by Yor is of limited practical use in the numerical computation of \eqref{eq:pricing}.

In the last decades, the pursue of the efficient methods to approximate the integral in \eqref{eq:pricing} has challenged many authors, of whom we give here an incomplete account. In \cite{GemanYor1992}, Geman and Yor gave an explicit representation of Asian option prices in terms of the Laplace transform of hypergeometric functions. However, several authors (see for instance \cite{FuMadanWang1998} and \cite{Dufresne2002}) pointed out the difficulty of pricing Asian options with short maturities or small volatilities using the analytical method in \cite{GemanYor1992}. This is also a disadvantage of the Laguerre expansion proposed in \cite{Dufresne2000}. 
In 
\cite{Shaw2003} a contour integral approach was employed to improve the accuracy in the case of low volatilities, though at a higher computational cost. In \cite{Linetsky2004} the problem was tackled using the spectral theory of singular Sturm-Liouville operators, yielding a series formula that gives very accurate results. However, again for low volatilities, the convergence might be slow and the method becomes computationally expensive. We also mention the Monte Carlo approach, which was taken by a number of authors to price efficiently Asian options under the Black-Scholes model (see \cite{GuasoniRobertson2008} among others).  In the Black-Scholes model
and for special homogeneous payoff functions, it is possible to reduce
the study of Asian options to a PDE with only one state variable. We refer to this approach as to PDE
reduction, which was followed in \cite{Ingersoll}, \cite{DewynneShaw2008} and \cite{CaisterOHaraGovinder2010} among other works.

The results of this paper pave the road for deriving sharp asymptotic expansions for prices and sensitivities through an expansion of the transition density $p$ of the form
\begin{equation}\label{eq:full_exp_2}
p(t,x;T,y) = 
H_1(t,x;T,y) {\bf u}(t,x;T,y)
\Big( 1 + (T-t) \alpha_1(t,x;T,y) + (T-t)^2 \alpha_2(t,x;T,y) + \cdots \Big),
\end{equation} 
with $H_1$ as in \eqref{eq:H1_intro}, ${\bf u}$ as determined in Proposition \ref{th:u}, and where the further correction terms $\alpha_1, \alpha_2, \cdots$ have to be determined through recursive procedure as in \cite{yosida1953fundamental}. In order to obtain approximations of \eqref{eq:pricing} that are computationally tractable, the expansion \eqref{eq:rep_Psi_intro} of $\Psi$ in terms of elementary functions, which we derive in Section \ref{sec:novel_repres}, plays an essential role as it allows to avoid the numerical inversion of the hyperbolic trigonometric functions that appear in the closed-form representation \eqref{eq:Psi_explicit}. The expansion \eqref{eq:full_exp_2} can be seen as a refinement of the asymptotic expansions derived in \cite{foschi2013approximations}, \cite{pagliarani2017intrinsic}, which are based on a perturbation procedure that approximates, at leading order, the Black-Scholes dynamics \eqref{eq-model} with some Langevin dynamics like in \eqref{eq-model_Langevin}. The result of this procedure is an expansion for $p$ of the form \eqref{eq:full_exp_2} whose leading term is 
a Gaussian transition density  as in \eqref{eq:density_langevin}. A similar approach was taken in \cite{gobet2014weak} to obtain analytical approximations by means of Malliavin calculus. 

In general, analytical approaches based on asymptotic expansions exhibit several advantages in that they provide approximations in closed form, which are fast to compute and display an explicit dependence on the parameters.  Other asymptotic methods for Asian options were studied in \cite{ShirayaTakahashi2010} and \cite{ShirayaTakahashiToda2009} by Malliavin calculus techniques. Within this stream of literature we also mention the results in \cite{pirjol2016short}, where sharp asymptotic expansions for \emph{fixed} and \emph{floating-strike} Asian options are derived. The latter approach is related to ours, as it exploits Varadhan's principle of large deviation (\cite{varadhan1967diffusion}), together with a contraction principle to deal with the arithmetic average.



\section{Preliminaries}\label{sec:preliminaries}
Throughout the paper we will use the following

\begin{notation}
Let $x=(x_1,x_2),y=(y_1,y_2)\in D$, we write
\begin{equation}
x\prec y
\end{equation}
if $x_2<y_2$. Furthermore, given $z=(t,x),w=(T,y)\in \R\times D$, we write 
\begin{equation}
z \prec w
\end{equation}
if $x\prec y$ and $t<T$.
\end{notation}

\begin{definition}\label{def:fund_sol}
A fundamental solution for the operator $\Lc$ is a function $p(z,w)$ defined for any $z,w\in\R\times D$ with $z\prec w$ such that:
\begin{itemize}
\item[i)] for any $w\in\R\times D$, the function $p(\cdot;w)$ solves 
\begin{equation}\label{eq:Lsolved}
\Lc u(z) = 0, \qquad z\in\R\times D,\ z\prec w,
\end{equation} 
in the sense of classical derivatives;
\item[ii)] for any $z=(T,x)\in\R\times D$, we have that $p(t,x;T,\cdot)\to \delta_{z}$ as $t\to T^-$ in the following sense:
\begin{equation}
\lim_{\substack{(t,x')\to z\\ t<T} } \int_{x'\prec y} p(t,x';T,y) \varphi(y) \dd y = \varphi(x),\qquad \varphi\in C_b(D).
\end{equation}
\end{itemize}
\end{definition}

\begin{remark}
Given a fundamental solution $p=p(z,w)$ for $\Lc$, one can consider its zero extension to the whole space $D\times D$ minus the diagonal $\{z=w\}$. This makes sense if we interpret the function $p(t,x;T,\cdot)$ as the transition density of the process $X_T$ starting at time $t$ from the point $x$. Indeed the second component $X^2$ of the process is strictly increasing due to the fact that the first component $X^{1}$ is a geometric Brownian motion, and as such it strictly positive. 
\end{remark}

It is useful to observe that the operator $\Lc$ is left-invariant with respect to the non-commutative group law 
\begin{equation}
z \circ w = (t+T,x_1y_1, x_2 + y_2 x_1), \qquad  z=(t,x), w=(T,y)\in \R\times D.
\end{equation}
Precisely:
\begin{equation}\label{eq:left_invariance}
\Lc u(z) = 0 \Leftrightarrow \Lc u^w(z)=0,\qquad \text{with }u^w(z) :=u( w\circ z ).
\end{equation}
As it turns out, $\mathbb{G}=(\R\times D,\circ)$ is a Lie Group with identity and inverse given by 
\begin{equation}
\id=(0,1,0),\qquad z^{-1} = \big(-t,x_1^{-1},-x_2 x_1^{-1}\big),
\end{equation}
respectively. By the left-invariance of $\Lc$, it is straightforward to see that  
\begin{equation}\label{eq:fund_sol_inv}
y_1^2 p(z;w) = 
 p(w^{-1}\circ z;\id) = p\Big( t-T, \frac{x_1}{y_1}, \frac{x_2-y_2}{y_1} ;  \id  \Big),  
\end{equation}
for any $z=(t,x),w=(T,y)\in\R\times D$ with $z\prec w$. Also, denoting by $\bar{p}$ the fundamental solution when $\sigma=1$, it is easy to check that 
\begin{equation}\label{eq:change_sigma}
p(z; \id) = \sigma^2 \bar{p}(\sigma^2 t, x_1,\sigma^2 x_2; \id),
\end{equation}
for any $z=(t,x)\in\R\times D$ with $z\prec \id$.

The following result was proved in \cite{cibelli2019sharp}. 
\begin{theorem}
The operator $\Lc$ has a unique fundamental solution, which is positive, smooth ($C^{\infty}$), and for which the following upper/lower bounds hold: for any arbitrary $\eps \in ]0, 1[$ and $T_0>0$, there exist two positive constants $c_{\eps}^-,
C_{\eps}^+$ depending on $\eps$ and $T_0$, and two positive universal constants $C^-,
c^+$ such that 
\begin{equation} \label{e-twosidedbounds1}
\begin{split}
  \frac{ c_{\eps}^-}{\s^2 y_1^2 (T-t)^2} \exp & \Big(- C^- \Psi\big(t+\eps(T-t),x_1,x_2+ y_1 \eps(T-t);T,y_1,y_2\big) \Big)
\le\\
   & \quad  \quad \Gamma(t,x_1,x_2;T,y_1,y_2)  \le \\
   & \quad \qquad \qquad \qquad
   \frac{ C_{\eps}^+}{\s^2 y_1^2 (T-t)^2} \exp \Big(- c^+\Psi\big(    t- \eps,x_1,x_2- y_1 \eps;T,y_1,y_2\big) \Big),
\end{split}
\end{equation}
for every $(t,x_1,x_2),(T,y_1,y_2)\in \R^+ \times D$ such that $(t+\eps(T-t),x_1,x_2+ y_1 \eps(T-t)) \prec (T,y_1,y_2)$ and $T-t<T_0$.
\end{theorem}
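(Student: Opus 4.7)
The plan is to exhibit a candidate fundamental solution $\Gamma$ directly from Yor's explicit formula \eqref{e-Yor-density}. By the left-invariance \eqref{eq:fund_sol_inv} and the scaling \eqref{eq:change_sigma}, it suffices to define $\Gamma$ at the group identity $\id$ and transport it by the group law. Smoothness would follow from the absolute convergence of the integral \eqref{psi} together with dominated convergence applied to successive derivatives; strict positivity can be read off from the probabilistic interpretation, since $X^1$ is a geometric Brownian motion and $X^2$ is its integral; the delta-convergence as $t\to T^-$ is obtained by extracting the leading Gaussian behaviour of \eqref{e-Yor-density} near the diagonal, which coincides with the Langevin kernel \eqref{eq:density_langevin}. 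Uniqueness would then follow either from the hypoellipticity of $\Lc$ combined with a Kolmogorov-type maximum principle, or from pathwise uniqueness for \eqref{eq-model}.

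\textbf{Upper bound.} I would employ a Levi-type parametrix based on the Gaussian kernel \eqref{eq:density_langevin} associated, for fixed $w=(T,y)$, with the constant-coefficient operator \eqref{eq:kolmogorov_constant} with $a=\sigma^2 y_1^2$. Calling this kernel $H_0(z;w)$ and writing $\Gamma=H_0+H_0\star\Phi$, the defect is $\Lc H_0=\tfrac{\sigma^2}{2}(x_1^2-y_1^2)\partial_{x_1 x_1}H_0$; the factor $(x_1-y_1)^2$ can be absorbed by the Gaussian at the cost of degrading the constant in front of $\Psi_{\sigma y_1}$ from $\tfrac{1}{2}$ to a universal $c^+<\tfrac{1}{2}$. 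Iterated convolutions are then controlled by the Chapman-Kolmogorov identity for $H_0$. Passing from $\Psi_{\sigma y_1}$ to the true cost function $\Psi$ evaluated at the backward-shifted endpoint $(t-\eps, x_1, x_2-y_1\eps)$ is achieved via the Taylor expansion \eqref{eq:taylor_Psi}: the shift absorbs the mismatch between the frozen quadratic form and the genuine geodesic cost, at the price of a prefactor $C_\eps^+$ which blows up as $\eps\to 0^+$.

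\textbf{Lower bound.} I would first establish a short-time local lower bound near $\id$ directly from \eqref{e-Yor-density}--\eqref{psi}, by isolating the dominant positive contribution of the oscillatory integral. Then I would propagate it globally by a Harnack chain of length proportional to $\Psi(z;w)/\delta$ along the optimal curve $\gamma$ of \eqref{eq:optimal_curves}. A parabolic Harnack inequality for operators in Kolmogorov-H\"ormander form, applied on subelliptic cylinders compatible with $\mathbb{G}$, produces a multiplicative factor at each step; their product yields the universal constant $C^-$ at the exponent, while the accumulated prefactor is $c_\eps^-$. The forward shifts $t+\eps(T-t)$ and $x_2+y_1\eps(T-t)$ in the argument of $\Psi$ represent the room needed to position the first cylinder of the chain while keeping $\gamma$ well inside $D$.

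\textbf{Main obstacle.} The principal difficulty is the lack of global uniform parabolicity: the diffusion coefficient $\sigma^2 x_1^2$ degenerates at $x_1\to 0^+$ and is unbounded as $x_1\to\infty$, so the parametrix iteration cannot be carried out with constants independent of $y_1$, and the Harnack-chain length grows with $|\log(y_1/x_1)|$. Reducing to the identity via \eqref{eq:fund_sol_inv}--\eqref{eq:change_sigma} converts this scale heterogeneity into a quantitative dependence on $\eps$ and $T_0$, which is exactly what \eqref{e-twosidedbounds1} records through $c_\eps^-$ and $C_\eps^+$; checking that a parabolic Harnack inequality of the required form is indeed available for the ultra-parabolic operator $\Lc$, with quantitative control along the optimal paths, is the technically most delicate step.
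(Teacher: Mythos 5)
You are proposing a proof for a statement that the paper itself does not prove: it is quoted verbatim from \cite{cibelli2019sharp}, so the only fair comparison is with that reference's strategy. Your lower-bound plan (a local short-time estimate near $\id$ propagated by Harnack chains along the optimal curves of \eqref{eq:control1}--\eqref{eq:optimal_curves}, with the $\eps$-shift of the pole providing the room needed to build the chain) is indeed, in spirit, how the cited work proceeds, and the reduction to $w=\id$, $\sigma=1$ via \eqref{eq:fund_sol_inv}--\eqref{eq:change_sigma} is legitimate and is also used in the present paper (Remark \ref{rem:invariance}).

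The genuine gap is in your upper bound. A Levi parametrix built on the frozen Gaussian kernel \eqref{eq:density_langevin} with $a=\sigma^2y_1^2$ can at best produce a bound of the form $C(T-t)^{-2}y_1^{-2}e^{-c\,\Psi_{\sigma y_1}}$, and this is \emph{not} upgradable to the bound $e^{-c^+\Psi(t-\eps,x_1,x_2-y_1\eps;T,y)}$ of \eqref{e-twosidedbounds1}: the expansion \eqref{eq:taylor_Psi} is purely local (it holds as $x\to(y_1,y_2-(T-t)y_1)$), whereas globally $\Psi$ and the quadratic form \eqref{eq:quadratic_form} have incomparable growth. Concretely, fix $(T,y)$ and let $x_1\to 0^+$: by \eqref{eq:represent_psi} one has ${\bf h}\to 0$ and $\Psi\sim \log^2({\bf h})\cdot 4/(\sigma^2(T-t))\to+\infty$ (and the same for the $\eps$-shifted argument), while $\Psi_{\sigma y_1}(t,x;T,y)$ stays \emph{bounded}; so no degradation of constants, and no $\eps$-shift, converts a frozen-Gaussian estimate into the super-Gaussian decay in $x_1$ that \eqref{e-twosidedbounds1} asserts. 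Moreover, the iteration itself does not close globally: the convolution step requires comparing kernels frozen at intermediate points $\xi_1$ with the kernel frozen at $y_1$, and these are not uniformly comparable since $\sigma^2\xi_1^2/\sigma^2y_1^2$ is unbounded; your suggestion that the group-invariance reduction turns this heterogeneity into a dependence on $\eps$ and $T_0$ alone cannot work, because after reduction the estimate must still hold uniformly over all $x\prec(1,0)$, and the constants $c_\eps^-,C_\eps^+$ in \eqref{e-twosidedbounds1} are not allowed to depend on $x,y$. This is precisely why the present paper replaces the frozen Gaussian by the Yosida-type kernel $H_1$ in \eqref{eq:H1_intro}, built from $\Psi$ itself, rather than attempting a Levi parametrix with \eqref{eq:density_langevin}; an upper bound of the type you want requires a genuinely global mechanism (in \cite{cibelli2019sharp}, a different argument than the frozen parametrix), not a local Taylor comparison.
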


We recall here the explicit representation for the cost function $\Psi$, as it was computed in \cite{cibelli2019sharp}.
For given $z=(t,x),w=(T,y)\in \R\times D$ with $z\prec w$, set
\begin{align}\label{def:E}
 E=E_{z,w}:  &= \frac{4}{(T-t)^2} g^{-1}\Big( \frac{1}{{\bf h}(z;w)}  \Big),\\
\sign=\sign_{z,w}:  &=\text{sgn} \bigg(E_{z,w}+ \frac{ \pi^2 }{ (T-t)^2 } \bigg) = \text{sgn} \bigg(4 g^{-1}\Big( \frac{1}{{\bf h}(z;w)}  \Big)+ \pi^2 \bigg),\\ 
\end{align}
with 
\begin{equation}\label{def:g}
g(r) =  \frac{ \sinh (\sqrt{r}) }{ \sqrt{r} }  = 
\begin{cases}
\frac{ \sinh (\sqrt{r}) }{ \sqrt{r} }     , &  r>0              \\  
1, & r=0\\
\frac{ \sin(\sqrt{-r}) }{ \sqrt{-r} }, & -\pi^2 < r<0 
\end{cases}
\end{equation}
and ${\bf h}$ as defined in \eqref{eq:h}.
Sometimes, here and throughout the paper, the notation $E$ will be preferred to $E_{z,w}$ when the dependence on $(z,w)$ is clear from the context. 
The optimal cost is then given by 
\begin{align}\label{eq:Psi_explicit}
\sigma^2 \Psi(z;w) & =    E_{z,w}(T-t) + \frac{ 4 (x_1 + y_1) }{  y_2 - x_2  }  - 4 \sign_{z,w} \sqrt{E + \frac{ 4 x_1 y_1 }{ ( y_2 - x_2)^2 } },
\end{align}
or equivalently
\begin{align}\label{eq:represent_psi}
\Psi(z;w) & =\frac{4}{\s^2(T-t)}\bigg[ g^{-1}\Big( \frac{1}{{\bf h}(z;w)}  \Big)+  {\bf h}(z;w) \bigg(\sqrt{\frac{x_1}{y_1}} + \sqrt{\frac{y_1}{x_1}}\,\bigg)  - 2 \sign_{z,w} \sqrt{g^{-1}\Big( \frac{1}{{\bf h}(z;w)}  \Big) + {\bf h}^{2}(z;w) }     \bigg].
\end{align}
\begin{remark}\label{rem:invariance}
It is straightforward to check that 
\begin{align}\label{eq:simmetry1}
{\bf h}(z;w) = {\bf h}( w^{-1}\circ z;\id) & =  {\bf h} \Big( t-T, \frac{x_1}{y_1}, \frac{x_2-y_2}{y_1} ;  0,1,0  \Big)\\
& =  {\bf h} \Big(  0,1,0 ;  T-t, \frac{y_1}{x_1}, \frac{y_2-x_2}{x_1}  \Big) = {\bf h}( \id; z^{-1}\circ w),
\end{align}
and thus
\begin{align}\label{eq:Psi_invariance}
\Psi(z,w) = \Psi ( w^{-1}\circ z; \id) &= \Psi \Big( t-T, \frac{x_1}{y_1}, \frac{x_2-y_2}{y_1} ;  0,1,0  \Big)\\
& =  \Psi\Big(  0,1,0 ;  T-t, \frac{y_1}{x_1}, \frac{y_2-x_2}{x_1}  \Big) = \Psi( \id; z^{-1}\circ w) .
\end{align}
This, together with \eqref{eq:fund_sol_inv} and \eqref{eq:change_sigma}, implies that the estimates of Theorems \ref{th:main} and \ref{th:lower} only need to be proved for $w=\id$ and $\sigma=1$. 
\end{remark}

\section{A novel representation for the cost function}\label{sec:novel_repres}

In this section we present a convergent expansion for the cost function $\Psi$, which allows to write the latter in terms of elementary functions, and preserves its asymptotic properties as ${\bf h}(z;w)$ tends to zero and infinity. We start off by re-writing the cost function as
\begin{equation}\label{eq:rep_psi}
\Psi(z,w) =\frac{4}{\s^2(T-t)}\bigg[ {\bf h}(z,w)\bigg(\sqrt{\frac{y_1}{x_1}} + \sqrt{\frac{x_1}{y_1}} -2\bigg) +    G\big( {\bf h}(z,w) \big) \bigg],
\end{equation}
where we set
\begin{align}\label{eq:G}
G(\eta)&:=2 \eta -2 \text{sgn} \big(4 g^{-1}( \eta^{-1} )+ \pi^2 \big) \sqrt{\eta ^2+g^{-1}\left(\eta^{-1}\right)}+g^{-1}\left(\eta^{-1}\right) \\
& = 2 \eta -2 \Big(\eta^{-1} - \frac{2}{\pi}\Big) 
\sqrt{\big(\eta ^2+g^{-1}\left(\eta^{-1}\right)\big)\Big( \eta^{-1}-\frac{2}{\pi}\Big)^{-2}}+g^{-1}\left(\eta^{-1}\right)
 , \qquad \eta>0.
\end{align}
The last equality above stems from the following relation 
\begin{equation}\label{eq:sgn_expl}
\text{sgn} \big(4 g^{-1}( \eta^{-1} )+ \pi^2 \big) = \begin{cases}
1,\qquad &\text{if } \eta < \frac{\pi}{2}\\
0,\qquad &\text{if } \eta = \frac{\pi}{2}\\
-1,\qquad &\text{if } \eta > \frac{\pi}{2}
\end{cases}.
\end{equation}

The representation \eqref{eq:rep_psi} for the cost function is particularly meaningful as the function $G$ is strictly convex on $\R^+$ and has global minimum  
\begin{equation}
\min G = G(1) = 0.
\end{equation}
In particular, $G(\eta)$ is strictly increasing for $\eta>1$ and strictly decreasing for $\eta\in ]0,1[$ (see Figure \ref{fig:ste2}).
\begin{figure}[htb]
\centering
\includegraphics[width=0.5\textwidth,height=0.25\textheight]{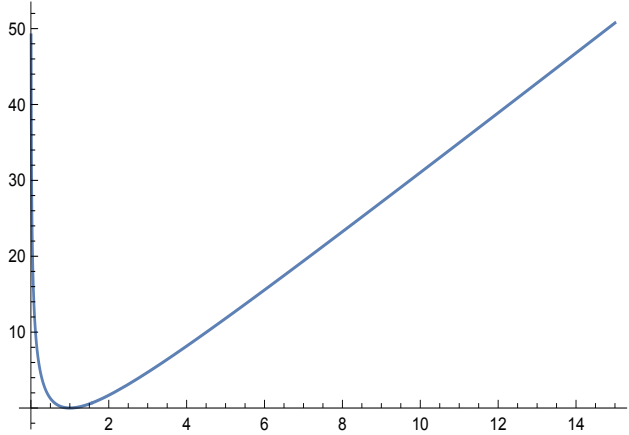} 
\caption{Plot of the function $G$ in \eqref{eq:G}.} 
\label{fig:ste2}
\end{figure} 
Therefore, the function $\Psi$ appears as a strictly increasing function of the non-negative quantities 
\begin{equation}
{\bf h}(z,w), \qquad \sqrt{\frac{y_1}{x_1}} + \sqrt{\frac{x_1}{y_1}} -2,
\end{equation}
with zero minimum at
\begin{equation}\label{eq:h_x}
\bigg( {\bf h}(z,w),  \sqrt{\frac{y_1}{x_1}} + \sqrt{\frac{x_1}{y_1}} -2 \bigg) = (1,0).
\end{equation}
\begin{remark}
Solving \eqref{eq:h_x} for $w=(T,y)$, we obtain
\begin{equation}
\begin{cases}
y_1 = x_1, \\
y_2 = x_2 + (T-t) x_1,
\end{cases}
\end{equation}
which is coherent with the control problem \eqref{eq:control1}-\eqref{eq:optimal_curves}. Indeed, a solution to \eqref{eq:optimal_curves} for $y=(y_1,y_2)$ as above is given by $\omega \equiv 0$, which clearly yields $\Psi(t,x;T,y)=0$. 
\end{remark}

The computation of the function $G$ in \eqref{eq:G} requires the numerical inversion of trigonometric functions. This makes the use of $\Psi$ numerically challenging, especially in regards to the computation of numerical integrals of the type
\begin{equation}
\int_{x\prec y} e^{-\frac{\Psi(t,x;T,y)}{2}} f(y) \dd y,
\end{equation}
which are in turn related to the evaluation of arithmetic Asian options (see \eqref{eq:pricing}).
For this reason, we look for an expansion of the function $G$ that makes the numerical computation of $\Psi$ more manageable. We would also like such expansion to preserve the asymptotic properties of $G$ as $\eta$ tends to $0^+$ and $+\infty$.

We introduce the following 
\begin{definition}\label{def:N_approx}
For any $N\in\N$ with $N\geq 2$, we define the \emph{$N$-th order expansion of $\Psi$} as
\begin{equation}
\Psi_N(z,w): =\frac{4}{\s^2(T-t)}\bigg[ {\bf h}(z,w)\bigg(\sqrt{\frac{y_1}{x_1}} + \sqrt{\frac{x_1}{y_1}} -2\bigg) +  \sum_{n=2}^{N}  G_n\big( {\bf h}(z,w) \big) \bigg],  \qquad N\geq 2,
\end{equation}
where
\begin{equation}\label{eq:def_Gn}
G_n(\eta): = a_n {\bf 1}_{]1,\infty[} (\eta)  \frac{ (\eta -1)^n}{\eta^{n-1} } +b_n {\bf 1}_{]0,1[}(\eta)  \frac{ (-\log \eta )^n}{\big(1-\log \eta\big)^{n-2}} ,
\end{equation}
and where the coefficients $a_n,b_n$ are recursively determined by solving the equations
\begin{equation}\label{eq:def_a_b}
\frac{\dd^n}{\dd \eta^n} G(\eta)\Big|_{\eta=1} = \frac{\dd^n}{\dd\eta^n} \sum_{k=2}^{n}  G_k (\eta ) \Big|_{\eta=1}, \qquad n\geq 1 .
\end{equation}
\end{definition} 
Before focusing on the convergence results and the asymptotic properties of the approximation $\Psi_N$, we discuss some aspects related to its computation. We first point out that the functions $G_n$ 
are written in terms of elementary functions. Therefore, the numerical evaluation of $\Psi_N$ is straightforward save computing the coefficients $a_n,b_n$ in \eqref{eq:def_Gn}. In regard to this matter, the characterization of the coefficients in Proposition \ref{prop:coeff_a_b} below comes in handy.  For the sake of completeness, the first $38$ coefficients $a_n,b_n$ are reported in Table \ref{tab:coeff}. 
\begin{table}
\centering
\begin{tabular}{c|c|c}
$n$  &  $a_n$  & $b_n$\\
\hline
2& 3  & 3
\\
3&0.6  & -0.6
\\
4&0.205714 & -0.444286
\\
5&0.0891429& -0.322
\\
6&0.044144&-0.227237
\\
7&0.0238419&-0.15493
\\
8&0.0136883&-0.100762
\\
9&0.00822413&-0.0610761
\\
10&0.00511786&-0.0328045
\\
11&0.00327527&-0.0133935
\\
12&0.00214449&-0.00074011
\\
13&0.00143104&0.00686845
\\
14&0.0009704&0.0108089
\\
15&0.000667139&0.0121717
\\
16&0.00046414&0.0118058
\\
17&0.000326287&0.0103592
\\
18&0.000231492&0.00831401
\\
19&0.000165581&0.00601859
\\
20&0.000119303&0.00371439
\\
21&0.0000865255&0.00155955
\\
22&0.0000631266&-0.000351322
\\
23&0.0000463045&-0.00197063
\\
24&0.0000341329&-0.00328434
\\
25&0.0000252745&-0.00430119
\\
26&0.000018793&-0.00504427
\\
27&0.0000140273&-0.00554463
\\
28&0.0000105073&-0.0058366
\\
29&$7.89662*10^-6$&-0.00595453
\\
30&$5.95282*10^-6$&-0.00593069
\\
31&$4.50037*10^-6$&-0.00579403
\\
32&$3.41143*10^-6$&-0.00556962
\\
33&$2.59248*10^-6$&-0.00527856
\\
34&$1.97478*10^-6$&-0.00493825
\\
35&$1.5076*10^-6$&-0.00456279
\\
36&$1.15335*10^-6$&-0.00416351
\\
37&$8.84092*10^-7$&-0.00374953
\\
38&$6.78961*10^-7$&-0.00332826
\end{tabular}
    \caption{Values of the coefficients $a_n,b_n$ up to $n=38$.}
\label{tab:coeff}
\end{table}

\begin{proposition}\label{prop:coeff_a_b}
For any $n\in\N$ with $n\geq 2$, the coefficients $a_n,b_n$ in \eqref{eq:def_Gn} equal to
\begin{align}\label{eq:an}
a_n &= (-1)^n \bigg( \beta_n + \beta_{n-1}  - \frac{2}{n!} \sum_{h=0}^n  \frac{(-1)^h (2h)!}{(1-2h) (h!) 4^h}
B_{n,h}\big( c_1, 2! c_2 , \cdots , (n-h+1)!  c_{n-h+1} \big)
 \bigg), \\ \label{eq:bn}
b_n &= d_{n-2}-2 d_{n-1}+d_{n}+2 (e_{n-2}-2 e_{n-1}+e_{n}-f_{n-2}+2 f_{n-1}-f_{n})   ,
\end{align}
with
\begin{align}
c_n &= \beta_n + 2 \beta_{n-1} + \beta_{n-2},\\
d_n & = \frac{1}{n!} \sum_{h=1}^{n}  L(n,h)  \sum_{k=1}^h \beta_k k!  {h\brace k}, \label{eq:dn} \\
e_n & = \frac{1}{n!}\sum _{h=1}^n (-1)^h L(n,h),\qquad \tilde{e}_n  = \frac{1}{n!}\sum _{h=1}^n (-2)^h L(n,h), \label{eq:en}\\
f_n & = \frac{1}{n!}\sum _{h=1}^n \frac{(-1)^h (2h)!}{(1-2 h) (h!) 4^h} B_{n,h}\big( d_1 + \tilde{e}_1, 2! (d_2 + \tilde{e}_2  ) , \cdots , (n-h+1)!  (d_{n-h+1} + \tilde{e}_{n-h+1}) \big),\\
\end{align}
and 
\begin{equation}\label{eq:betas}
\b_{-1}=\b_0=0,\qquad \b_1 =6 , \qquad 
 \b_k = - \frac{6^k}{k!} \sum_{h=1}^{k-1} h! b_h B_{k,h}\bigg( \frac{1}{3!} , \frac{2!}{5!},  \cdots, \frac{(k-h+1)!}{[2(k-h+1) + 1]!}  \bigg),\ k\geq 2.
\end{equation}
Above, the functions $B_{k,h}$ represent the exponential Bell polynomials, $L(n,h)$ denote the Lah numbers, and ${h\brace k}$ the Stirling numbers of the second kind. 
\end{proposition}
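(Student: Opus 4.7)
The approach is to exploit the fact that the indicator functions in \eqref{eq:def_Gn} split the single matching condition \eqref{eq:def_a_b} into two independent triangular recursive systems: matching one-sided derivatives at $\eta \to 1^+$ determines $\{a_n\}$, while matching at $\eta \to 1^-$ determines $\{b_n\}$. By \eqref{eq:sgn_expl} the sign factor equals $+1$ in a full neighborhood of $\eta = 1$ (its discontinuity is at $\eta = \pi/2$), so $G$ is smooth there and both one-sided Taylor expansions coincide with the two-sided Taylor series of $G$.

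For \eqref{eq:an}, I would first identify $\beta_k$ with the Taylor coefficient $(d^k/dt^k)[g^{-1}(e^t)]|_{t=0}$; this is consistent with $\beta_1 = (g^{-1})'(1) = 6$ since $g(r) = 1 + r/6 + O(r^2)$. The recursion \eqref{eq:betas} then follows by applying Fa\`a di Bruno's formula to $g \circ g^{-1} = \mathrm{id}$ in the variable $t$, with the expansion $g(r) - 1 = \sum_{j \geq 1} r^j/(2j+1)!$ producing the Bell polynomial arguments $(n-h+1)!/[2(n-h+1)+1]!$. Next I would assemble the Taylor series of $G(\eta)$ at $\eta = 1$ from the linear contribution $2\eta + g^{-1}(\eta^{-1})$ (producing the piece $\beta_n + \beta_{n-1}$ after transforming $\eta^{-1}$) and from the square-root term $\sqrt{\eta^2 + g^{-1}(\eta^{-1})}$, expanded via Fa\`a di Bruno with outer function $\sqrt{\cdot}$ (whose $h$-th derivative at $1$ contributes the factor $(-1)^h (2h)!/[(1-2h)(h!) 4^h]$) composed with an inner function whose $n$-th derivative at $\eta = 1$ is $c_n = \beta_n + 2\beta_{n-1} + \beta_{n-2}$. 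Matching these coefficients against $\sum a_n (\eta-1)^n/\eta^{n-1}$, whose triangular structure comes from the binomial expansion of $\eta^{-(n-1)}$ around $\eta = 1$, yields \eqref{eq:an}; the alternating sign $(-1)^n$ arises from this triangular inversion.

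For \eqref{eq:bn}, I would substitute $\eta = e^{-t}$, so the basis functions $(-\log\eta)^n/(1-\log\eta)^{n-2}$ become $t^n/(1+t)^{n-2}$, and then expand $G(e^{-t})$ as a Taylor series in $t$ around $t = 0$. Converting derivatives between the $\eta$- and $t$-pictures relies on the standard Lah-number identity expressing $\partial_t^n|_{t=0}$ as a combination of $\partial_\eta^h|_{\eta=1}$, together with the operator identity $(\eta\partial_\eta)^h = \sum_{k=1}^h \stirling{h}{k} \eta^k \partial_\eta^k$ (Stirling numbers of the second kind), which re-expresses the result in terms of the $\beta_k$. Applying this to each summand produces $\tilde e_n$ and $e_n$ from $e^{-2t}$ and $e^{-t}$, $d_n$ from $g^{-1}(e^t)$ (whose $h$-th $t$-derivative at $0$ is $\beta_h$, cf. \eqref{eq:dn}), and $f_n$ from the square root via one further Fa\`a di Bruno iteration, with inner function of $n$-th derivative $d_n + \tilde e_n$. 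The second-difference combination $q_{n-2} - 2 q_{n-1} + q_n$ visible in \eqref{eq:bn} is inherited from the expansion of $(1+t)^{-(n-2)}$, which couples three consecutive coefficients via the triangular inversion against the basis $t^n/(1+t)^{n-2}$.

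The main obstacle will be verifying that the interleaved recursion for $\beta_k$ through $b_h$ in \eqref{eq:betas} is self-consistent: since $\beta_k$ really represents the Taylor coefficients of the fixed analytic function $g^{-1}(e^t)$, its values are determined a priori, and one must show that \eqref{eq:betas} reproduces them using only the $b_h$ with $h < k$. This requires reorganizing the Fa\`a di Bruno expansion of $g \circ g^{-1} = \mathrm{id}$ by substituting the matching equations that define the $b_h$. Once this consistency is established, the remaining work reduces to careful combinatorial bookkeeping with Bell polynomials, Lah numbers, and Stirling numbers of the second kind, together with the triangular inversions against the two bases.
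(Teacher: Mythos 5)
Your overall strategy (Taylor-matching at $\eta=1$, Fa\`a di Bruno for $g^{-1}$ and for the square root) is in the right spirit, but two concrete gaps prevent it from yielding the stated formulas. First, you propose to match the Taylor series of $G$ at $\eta=1$ against the bases $(\eta-1)^n/\eta^{n-1}$ and $(-\log\eta)^n/(1-\log\eta)^{n-2}$ by a ``triangular inversion'' (binomially expanding $\eta^{-(n-1)}$, resp.\ $(1+t)^{-(n-2)}$ with $\eta=e^{-t}$). That inversion is an infinite lower-triangular system, and nothing in your argument shows it collapses to the clean structures claimed: the sign $(-1)^n$ in \eqref{eq:an} and the second difference $q_{n-2}-2q_{n-1}+q_n$ in \eqref{eq:bn} do not come out of such an inversion. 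The paper instead changes variables to $\xi=(\eta-1)/\eta$ (resp.\ $\xi=\frac{-\log\eta}{1-\log\eta}$), under which the $n$-th basis function becomes exactly $\xi^n$ up to the global factor $(1-\xi)^{-1}$ (resp.\ $(1-\xi)^{-2}$); then $a_n$ and $b_n$ are literally the Taylor coefficients at $\xi=0$ of $F(\xi)=(1-\xi)\,G\big(\tfrac{1}{1-\xi}\big)$ and $F(\xi)=(1-\xi)^2\,G\big(e^{\xi/(\xi-1)}\big)$, with no inversion needed; the $(-1)^n$ arises from substituting $1-\xi$ into the series of $g^{-1}$ at $1$, and the second-difference pattern arises from the polynomial prefactor $(1-\xi)^2$, not from the basis. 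Without this (or an equivalent) device, your matching produces a different, messier recursion, and you would still owe a proof that it coincides with \eqref{eq:an}--\eqref{eq:bn}.

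Second, your identification of $\beta_k$ is incorrect: in the proposition they are the coefficients of the power series of $g^{-1}$ centered at $1$ (in powers of $\eta-1$), not the $t$-derivatives of $g^{-1}(e^t)$ at $t=0$. The recursion \eqref{eq:betas} is exactly Fa\`a di Bruno inversion of $g$ at $0$ using $g^{(n)}(0)=n!/(2n+1)!$; the symbol $b_h$ appearing there is a typo for $\beta_h$ (as the paper's proof makes clear), so the ``self-consistency obstacle'' you flag is vacuous and the $\beta_k$ are determined autonomously, independently of the $b_n$. Moreover, with your identification the Stirling-number factor ${h\brace k}$ in \eqref{eq:dn}, which is precisely the conversion from the $(\eta-1)$-expansion of $g^{-1}$ to the composition with $e^{\xi/(1-\xi)}-1$, would be double-counted, so your route cannot reproduce \eqref{eq:dn}, and hence \eqref{eq:bn}, as stated.
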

\begin{proof}
We first prove \eqref{eq:an}. By the change of variable $\xi : = \frac{\eta-1}{\eta}$, we obtain that \eqref{eq:def_a_b} is equivalent to 
\begin{equation}\label{eq:def_a_b_bis}
\frac{1}{n!}\frac{\dd^n}{\dd \xi^n} F(\xi)\Big|_{\xi=0} = \frac{1}{n!} \frac{\dd^n}{\dd\xi^n} \sum_{k=2}^{n} a_k \xi ^k \Big|_{\xi=0} =  a_n , \qquad n\geq 2,
\end{equation}
where 
\begin{equation}\label{eq:change_var_def}
F(\xi):= (1-\xi)G\Big(  \frac{1}{1-\xi}  \Big) =  2 -2  \sqrt{1+(1-\xi)^2 g^{-1}\left(1-\xi\right)}+(1-\xi)g^{-1}\left(1-\xi\right) , \qquad |\xi|<<1  . 
\end{equation}
We now go on to compute the power series of $F$ at $\xi=0$. 
Denoting by $(\b_n)_{n\in\mathbb{N}_0}$ the coefficients of the powers series of $g^{-1}$ in $1$, a direct application of Fa\`a di Bruno formula yields
\begin{align}
\b_0 &=0 ,\\
\b_1 &=\big({g'(0)}\big)^{-1} ,\\
n! \b_n &= - \big({g'(0)}\big)^{-n} \sum_{h=1}^{n-1} h! \b_h B_{n,h}\big( g'(0) , g''(0), \cdots, g^{(n-h+1)}(0)  \big),\qquad n\geq 2,
\end{align}
where $B_{n,h}$ represent the exponential Bell polynomials. 
Note now that, by \eqref{eq:series_g}, we obtain
\begin{equation}
g^{(n)}(0) = \frac{n!}{(2n+1)!}, \qquad n\in \N_0,
\end{equation}
which yields \eqref{eq:betas}.
Thus we obtain 
\begin{align}
(1-\xi ) g^{-1}(1-\xi) &= \sum_{n=1}^\infty (-1)^n( \beta_n + \beta_{n-1} ) \xi^n ,\\
(1-\xi )^2 g^{-1}(1-\xi) &= \sum_{n=1}^\infty  (\beta_n + 2 \beta_{n-1} + \beta_{n-2}) \xi^n,
\end{align}
for any $\xi$ close to $0$. Eventually, applying once more Fa\`a di Bruno formula, together with
\begin{equation}\label{eq:square_root_tay}
\sqrt{1+x} = 1 + \sum_{n=1}^{\infty}   \frac{(-1)^n (2n)!}{(1-2n) (n!)^2 4^n} x^n, \qquad |x|<<1,
\end{equation}
yield \eqref{eq:def_a_b_bis} with $a_n$ as given by \eqref{eq:an}. 

To prove \eqref{eq:bn} we use an analogous argument. By the change of variable $\xi : = \frac{-\log \eta}{1 - \log \eta}$, we obtain that \eqref{eq:def_a_b} is equivalent to 
\begin{equation}\label{eq:def_a_b_ter}
\frac{1}{n!}\frac{\dd^n}{\dd \xi^n} F(\xi)\Big|_{\xi=0} = \frac{1}{n!} \frac{\dd^n}{\dd\xi^n} \sum_{k=2}^{n} b_k \xi ^k \Big|_{\xi=0} =  b_n , \qquad n\geq 2,
\end{equation}
where
\begin{equation}\label{eq:change_var_def_bis}
F(\xi):=(1-\xi)^2 G\Big(  e^{\frac{\xi}{\xi-1} }  \Big)= (1-\xi)^2 \Big( 2 e^{\frac{\xi}{\xi-1}}  -2  \sqrt{e^{\frac{2\xi}{\xi-1}}+g^{-1}\big( e^{\frac{\xi}{1-\xi}} \big)}+g^{-1}\big( e^{\frac{\xi}{1-\xi}} \big) \Big), \qquad |\xi|<<1  . 
\end{equation}
By applying now Fa\`a di Bruno formula we obtain
\begin{align}
e^{\frac{\xi}{\xi-1}} -1  & =  \sum_{n=1}^{\infty}  \frac{\xi^n}{n!} \sum _{h=1}^n  B_{n,h}\big(-1,-2!,\cdots,-(n-h+1)!\big)  =  \sum_{n=1}^{\infty}  \frac{\xi^n}{n!} \sum _{h=1}^n (-1)^h \binom{n-1}{h-1}\frac{n!}{h!} =  \sum_{n=1}^{\infty} e_n {\xi^n},\\ 
e^{\frac{2\xi}{\xi-1}} - 1 &  =   \sum_{n=1}^{\infty}  \frac{\xi^n}{n!} \sum _{h=1}^n  B_{n,h}\big(-2,-2\cdot 2!,\cdots,-2\cdot (n-h+1)!\big)  =  \sum_{n=1}^{\infty} \tilde{e}_n {\xi^n} , \\
g^{-1}\big( e^{\frac{\xi}{1-\xi}} \big)   & =     \sum_{n=1}^{\infty}  \frac{\xi^n}{n!} \sum _{h=1}^n  B_{n,h}\big(1, 2!,\cdots,  (n-h+1)!\big) \sum_{k=1}^h k! \beta_k B_{n,k}(1,\cdots,1)  =  \sum_{n=1}^{\infty}{d}_n {\xi^n}  ,
\end{align}
for any $\xi$ close to $0$, where $d_n,e_n,\tilde{e}_n$ are as in \eqref{eq:dn}-\eqref{eq:en}. Eventually, applying again Fa\`a di Bruno formula together with \eqref{eq:square_root_tay} yield \eqref{eq:def_a_b_ter} with $b_n$ as in \eqref{eq:bn}.
\end{proof}
We now address both point-wise and asymptotic convergence of $\Psi_N$ to $\Psi$. Concerning the former one, the desired result would be 
\begin{equation}\label{eq:conv_G}
\sum_{n=2}^N G_n(\eta) \to G(\eta)\qquad \text{as }N\to +\infty, 
\end{equation}
for any $\eta>0$, which would in turn imply
\begin{equation}
\Psi_N (z,w) \to \Psi(z,w)\qquad \text{as }N\to \infty,
\end{equation}
for any $z,w\in \R\times D$ with $z\prec w$.
However, unfortunately, we were able to provide a rigorous proof of \eqref{eq:conv_G} only for $\eta>1$ (see Theorem \ref{th:represent_Psi} below). Despite of this, we point out that strong numerical evidence suggests that \eqref{eq:conv_G} is satisfied also for $\eta\in]0,1[$. To support this claim, in Figure \ref{fig:ste1} we compare the plot of the truncated series $\sum_{n=2}^N G$ with that of $G$, for different values of $N$.
\begin{figure}[htb]
\centering
\includegraphics[width=1\textwidth,height=0.55\textheight]{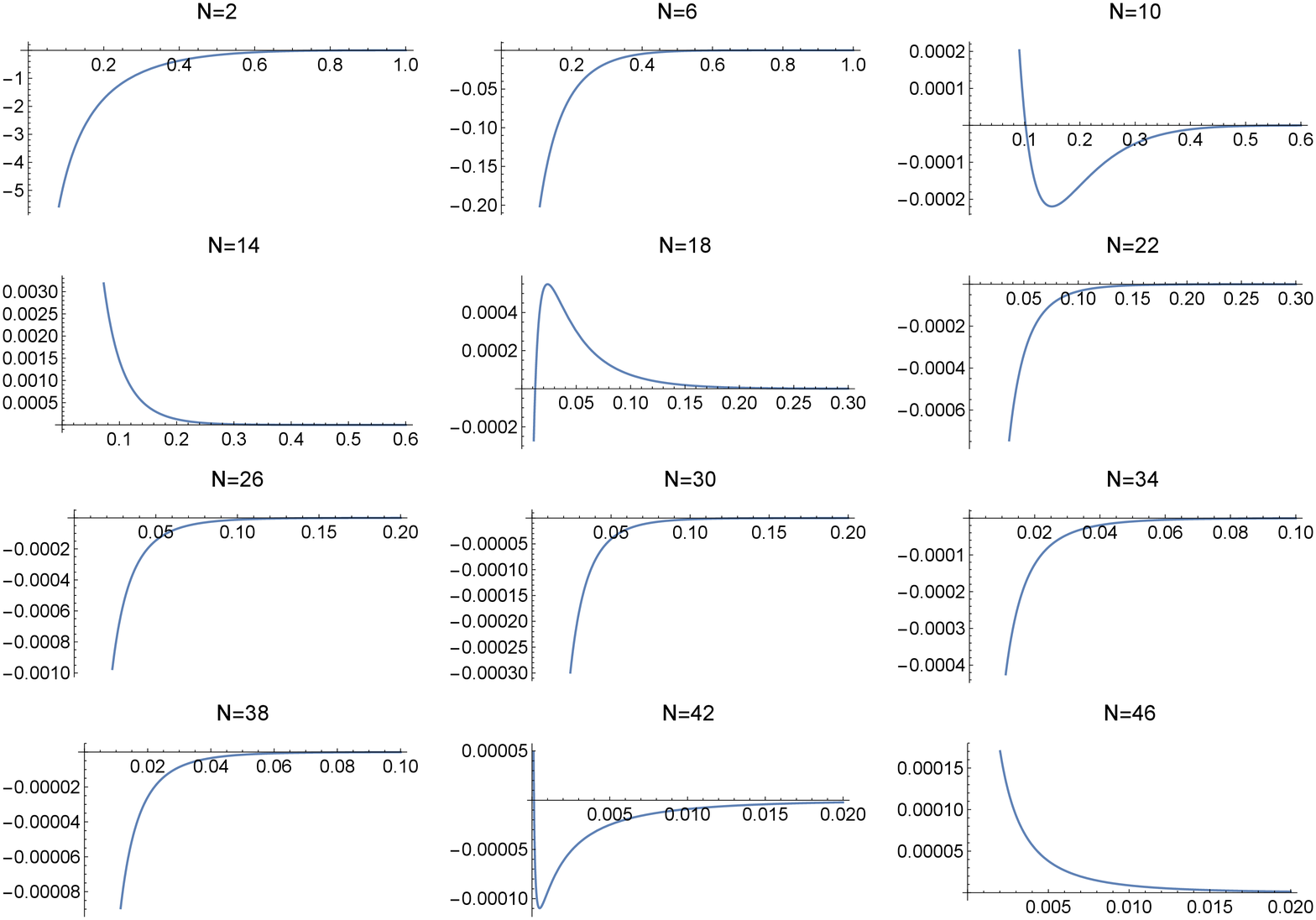} 
\caption{Plot of the difference $G(\eta)-\sum_{n=2}^N  G_n(\eta)$, with $G_n$ as in Definition \ref{def:N_approx}, for values of $\eta$ close to $0$.} 
\label{fig:ste1}
\end{figure} 
\begin{theorem}\label{th:represent_Psi}
For any $\eta>1$ the limit \eqref{eq:conv_G} holds true.
\end{theorem}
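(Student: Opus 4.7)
The plan is to reformulate the assertion as the convergence of a Taylor series and then establish the required radius of convergence by complex analysis. Substituting $\xi = (\eta-1)/\eta \in (0,1)$ (so $\eta = 1/(1-\xi) > 1$), I introduce $F(\xi) := (1-\xi)\, G\bigl(1/(1-\xi)\bigr)$, so that $G(\eta) = \eta F(\xi)$. By \eqref{eq:def_Gn}, one has $G_n(\eta) = a_n \eta \xi^n$ for $\eta > 1$, whence
\begin{equation*}
\sum_{n=2}^N G_n(\eta) = \eta \sum_{n=2}^N a_n \xi^n,
\end{equation*}
and by the change-of-variable argument already employed in the proof of Proposition \ref{prop:coeff_a_b} (see \eqref{eq:def_a_b_bis}), the sequence $(a_n)_{n \geq 2}$ consists precisely of the Taylor coefficients of $F$ at $\xi = 0$. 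The theorem thus reduces to proving that $F$ extends holomorphically to an open disk $\{|\xi| < R\}$ with $R > 1$; the conclusion then follows from the Cauchy--Hadamard formula.

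The next step is to rewrite $F$ in a manifestly analytic form. Setting $\xi_* := 1 - 2/\pi$, $H(\xi) := 1 + (1-\xi)^2 g^{-1}(1-\xi) = (1-\xi)^2 \phi(1/(1-\xi))$ with $\phi(\eta) := \eta^2 + g^{-1}(\eta^{-1})$, a direct computation yields $\phi(\pi/2) = \phi'(\pi/2) = 0$ and $\phi''(\pi/2) > 0$, so $H$ has a double zero at $\xi_*$; factor $H(\xi) = (\xi - \xi_*)^2 K(\xi)$ with $K$ analytic near $\xi_*$ and $K(\xi_*) > 0$. Combined with the identity $\sigma(\eta) := \text{sgn}(4 g^{-1}(\eta^{-1}) + \pi^2) = -\text{sgn}(\xi - \xi_*)$ from \eqref{eq:sgn_expl}, this produces the sgn-free representation
\begin{equation*}
F(\xi) = 2 + (1-\xi)\, g^{-1}(1-\xi) + 2(\xi - \xi_*) \sqrt{K(\xi)},
\end{equation*}
holomorphic on any simply connected domain where $g^{-1}(1-\xi)$ and $K(\xi)$ are analytic and $K$ does not vanish.

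To bound from below the radius of analyticity of the principal branch of $g^{-1}$, I locate its nearest branch point starting from $\zeta = 1$. The critical points of the entire function $g(r) = \sinh(\sqrt r)/\sqrt r$ are obtained by setting $w = \sqrt r$ and solving $w = \tanh(w)$; the nontrivial solutions are purely imaginary at $w = \pm i v_k$, where $(v_k)_{k \geq 1}$ is the increasing sequence of positive roots of $v = \tan(v)$, so the critical points of $g$ in $r$ are $r_k = -v_k^2$. Starting at $\zeta = 1$ with $r = 0$ and extending along the negative real axis, the first critical point actually met is $r_1 = -v_1^2$, at $\zeta = g(-v_1^2) = \sin(v_1)/v_1 = \cos(v_1)$ (using $v_1 = \tan v_1$). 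Since $v_1 \in (\pi, 3\pi/2)$, one has $\cos(v_1) \in (-1, 0)$ and hence $1 - \cos(v_1) > 1$, so $g^{-1}(1-\xi)$ is holomorphic on $\{|\xi| < R_0\}$ for some $R_0 > 1$.

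The final and, in my view, hardest step is to verify that $K$ is non-vanishing on some disk $\{|\xi| < R\}$ with $1 < R \le R_0$, equivalently that $H$ has no zeros other than $\xi_*$ in such a disk. On the real interval $(-R_0, R_0) \cap \mathbb{R}$ this is immediate from the strict convexity of $G$ shown in Figure \ref{fig:ste2}, which forces $\phi(\eta) > 0$ for every $\eta > 0$ with $\eta \neq \pi/2$. For complex $\xi$ I would argue by continuous deformation, tracking the zeros of $H$ inside $\{|\xi| \le r\}$ by the argument principle as $r$ grows from $0$; the topological lemma of Appendix \ref{sec:app_top_lemma} would presumably enter here to guarantee that no new zero crosses the expanding boundary before $r > 1$, for instance by combining a Rouch\'e-type lower bound on $|H|$ on $\partial\{|\xi| = r\}$ with the known zero multiplicity at $\xi_*$. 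Once $K$ is shown to be non-vanishing on $\{|\xi| < R\}$, the sgn-free representation yields the desired holomorphic extension of $F$, and Cauchy--Hadamard then delivers \eqref{eq:conv_G} for every $\eta > 1$.
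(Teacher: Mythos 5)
Your overall strategy coincides with the paper's: the substitution $\xi=(\eta-1)/\eta$, the identification of $(a_n)$ with the Taylor coefficients of $F$, the removal of the sign via the double zero of $1+(1-\xi)^2g^{-1}(1-\xi)$ at $\xi_*=1-2/\pi$, and the conclusion by holomorphy of $F$ on a disk. (Incidentally, radius $\geq 1$ already suffices, since $\eta>1$ corresponds to $\xi\in(0,1)$; insisting on $R>1$ only makes your task harder.) The first genuine gap is your justification that the principal branch of $g^{-1}(1-\xi)$ is holomorphic on a disk of radius $R_0>1$. Locating the critical value $\cos(v_1)\approx-0.217$ reached by continuing along the negative real axis does not control the radius of analyticity: holomorphy on a disk requires continuation along \emph{every} path in the disk (then monodromy), and the ``distance to the nearest critical value'' criterion is in fact unavailable here, because the critical values $\cos(v_k)=\pm(1+v_k^2)^{-1/2}$ accumulate at $0$ and the positive ones (e.g.\ $\cos(v_2)\approx 0.128$) lie \emph{inside} $B_1(1)$. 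What must be shown is that the branch you continue never reaches the corresponding critical points (which lie far outside $B_{\pi^2}(0)$) nor escapes to infinity; since $g$ is entire and non-proper, $g^{-1}$ may also have transcendental singularities over asymptotic values, and $0=\lim_{r\to-\infty}g(r)$ is such a value sitting at distance exactly $1$ from $\zeta=1$, which is precisely why your claim $R_0>1$ is delicate and unproved. This global univalence statement is the content of Lemma \ref{lemma:prelim} ($g'\neq0$ on $B_{\pi^2}(0)$, boundary homeomorphism, image surrounding $B_1(1)$) combined with the Hadamard--Caccioppoli-type Lemma \ref{lem:lem_topol}, and your proposal contains no substitute for it.

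The second gap is the non-vanishing of $K$ on the complex disk (equivalently, that $H(\xi)=1+(1-\xi)^2g^{-1}(1-\xi)$ has no zero besides the double zero at $\xi_*$, and that the quantity under the square root avoids the branch cut), which is exactly what makes $\sqrt{K}$ single-valued and holomorphic and hence $F$ holomorphic. You state yourself that this is the hardest step and only sketch a hoped-for Rouch\'e/argument-principle deformation (``I would argue\dots'', ``would presumably enter''), while the real case is justified by appeal to a plot, which is not a proof. Since both load-bearing steps — analyticity of the relevant branch of $g^{-1}$ on $B_1(1)$ and control of the square-root argument — are asserted rather than established, the proposal does not yet constitute a proof of \eqref{eq:conv_G}, even though its architecture is the same as the paper's.
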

The proof of Theorem \ref{th:represent_Psi} is deferred until Section \ref{subsec:proof}. 

We now turn our attention to the asymptotic convergence. By definition of $g$ we can easily obtain 
\begin{equation}\label{eq:asymp_ginv}
g^{-1}(h) \sim \log^2 h \qquad \text{ as } h\to +\infty, 
\end{equation}
where, as already stated in the introduction, we adopted the notation
\begin{equation}
f \sim g \Leftrightarrow \frac{f}{g} \longrightarrow 1.
\end{equation}
The definition of $G$ together with \eqref{eq:asymp_ginv} then yield
\begin{align}\label{eq:asymptotic_G_right}
  G(\eta) \sim 4 \eta \qquad &\text{as } \eta\to +\infty,\\ \label{eq:asymptotic_G_left}
  G(\eta) \sim \log^2 \eta  \qquad &\text{as } \eta\to 0^+ .
\end{align}
The idea behind the approximation $\Psi_N$ is to expand the function $G$ by means of suitable basis functions so as to obtain
\begin{align}\label{eq:asympt_sum_right}
{\sum_{n=2}^{N}  G_n (\eta )}\sim \eta \sum_{n=2}^N a_n\qquad &\text{as } \eta\to \infty, \\ \label{eq:asympt_sum_left}
{\sum_{n=2}^{N}  G_n (\eta )}\sim (\log^2 \eta)  \sum_{n=2}^N b_n   \qquad &\text{as } \eta\to 0^+.
\end{align}
Indeed, the latter easily stem from 
\begin{align}
\frac{ (\eta -1)^n}{\eta^{n-1} } &\sim \eta \qquad \text{as } \eta\to +\infty,\\
\frac{ (-\log \eta )^n}{\big(1-\log \eta\big)^{n-2}}  &\sim \log^2 \eta \qquad \text{as } \eta\to 0^+ .
\end{align}
Ideally, we would like to show that
\begin{align}\label{eq:asympt_behav}
\lim_{N\to\infty} \sum_{n=2}^N a_n =& 4, \\ \label{eq:asympt_behav_log}
  \lim_{N\to\infty}\sum_{n=2}^N b_n = &1,
\end{align}
which means the asymptotic behavior of $\sum_{n=2}^N G_n(\eta)$ converges to the asymptotic behavior of $G(\eta)$ as $\eta$ tends to $0^+$ and $+\infty$. Note that this property is in general not granted for convergent expansions. We start by considering the case $\eta>1$. 
In Corollary \ref{cor:asympt} below, we show that \eqref{eq:asympt_behav} is satisfied provided that the coefficients $a_n$ are all positive. Unfortunately, we were not able to rigorously prove the positiveness of the coefficients $a_n$, thought the numerical values reported in Table \ref{tab:coeff} strongly support this conjecture. 
\begin{corollary}\label{cor:asympt}
Under the assumption that the coefficients $(a_n)_{n\geq 2}$ are all positive, the limit \eqref{eq:asympt_behav} holds true.
\end{corollary}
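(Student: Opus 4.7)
The plan is to convert the asymptotic statement on $G$ into a statement about a power series evaluated at $u \to 1^-$, and then apply a standard Abelian-type theorem, for which the positivity of the $a_n$ is exactly what is needed.

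First I would rewrite the expansion in a form amenable to a power series argument. For $\eta > 1$ the expansion of Definition \ref{def:N_approx} reduces to $G_n(\eta) = a_n (\eta-1)^n/\eta^{n-1}$. Setting $u := 1 - 1/\eta \in (0,1)$, this becomes $G_n(\eta) = \eta \, a_n u^n$. By Theorem \ref{th:represent_Psi}, the series $\sum_{n=2}^\infty G_n(\eta)$ converges to $G(\eta)$ for every $\eta>1$, so
\begin{equation}
\sum_{n=2}^\infty a_n u^n = \frac{G(\eta)}{\eta}, \qquad \eta>1, \ u = 1 - 1/\eta.
\end{equation}

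Next I would read off the boundary behavior. As $\eta \to +\infty$ we have $u\to 1^-$, and the right-hand side tends to $4$ by the asymptotics \eqref{eq:asymptotic_G_right}. Thus
\begin{equation}
\lim_{u\to 1^-} \sum_{n=2}^\infty a_n u^n = 4.
\end{equation}
It remains only to commute this limit with the summation, which would give $\sum_{n=2}^\infty a_n = 4$.

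The passage to the limit is where positivity enters. Under the assumption $a_n \geq 0$ for all $n\geq 2$, the map $u \mapsto a_n u^n$ is non-decreasing on $(0,1)$ for each $n$, so the monotone convergence theorem (equivalently, Abel's theorem for power series with nonnegative coefficients) gives
\begin{equation}
\sum_{n=2}^\infty a_n = \sup_{u\in (0,1)} \sum_{n=2}^\infty a_n u^n = \lim_{u\to 1^-} \sum_{n=2}^\infty a_n u^n = 4,
\end{equation}
which is \eqref{eq:asympt_behav}. The only non-routine ingredient is the positivity hypothesis on the $a_n$: without it one would need a different (and delicate) uniform summability estimate to justify interchanging the limit and the sum, whereas with positivity the Abelian argument above closes the proof in one line.
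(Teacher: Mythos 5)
Your proof is correct, and it reaches the conclusion by a genuinely more streamlined route than the paper, while using the same three ingredients: the pointwise convergence $\sum_{n\geq 2}G_n(\eta)\to G(\eta)$ for $\eta>1$ (Theorem \ref{th:represent_Psi}), the asymptotics $G(\eta)\sim 4\eta$ in \eqref{eq:asymptotic_G_right}, and the positivity of the $a_n$. The paper argues by contradiction in two cases: assuming $\bar a<4$ it bounds $\sum_{n=2}^N G_n(\bar\eta)\leq \bar a\,\bar\eta$ (positivity plus $(\eta-1)^n/\eta^{n-1}\leq\eta$) and compares with $G(\bar\eta)$ at a suitably large $\bar\eta$; assuming $\bar a>4$ it discards the tail (positivity again) and compares a fixed partial sum with $G(\bar\eta)$ for $\bar\eta$ large. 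You instead pass to the variable $u=1-1/\eta$, so that $\sum_{n\geq2}a_nu^n=G(\eta)/\eta$ — which is exactly the function $F$ appearing in \eqref{eq:change_var} of the paper's own proofs — read off $\lim_{u\to 1^-}\sum_{n\geq 2}a_nu^n=4$ from \eqref{eq:asymptotic_G_right}, and interchange limit and sum by monotone convergence. This is shorter, direct rather than by contradiction, and makes transparent exactly where positivity enters: it is the Tauberian hypothesis permitting the converse of Abel's theorem (the forward Abel theorem alone would not suffice here, so ``monotone convergence'' is the accurate name for the step, as you in fact use it). The paper's two-case argument is essentially a hand-made proof of this same elementary Tauberian statement; your version buys brevity and a cleaner localization of the hypothesis, at the only cost of invoking the named theorem.
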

\begin{proof}
Let $\bar{a}:=\lim_{N\to\infty} \sum_{n=2}^N a_n$ and assume $\bar{a}< 4$. By definition \eqref{eq:def_Gn}, and by \eqref{eq:asymptotic_G_right}, there exists $\bar{\eta}>1$ such that
\begin{equation}\label{eq:bound001}
\sum_{n=2}^N G_n(\bar{\eta}) < \bar{a} \bar{\eta} < \frac{\bar{a}+4}{2}\bar{\eta} < G(\bar{\eta}) ,\qquad N\geq 2. 
\end{equation}
This violates Theorem \ref{th:represent_Psi} and 
yields a contradiction. Therefore, we have $\bar{a}\geq 4$. 

Assume now that $\bar{a}> 4$. Then there exits $\bar{N}$ such that 
\begin{equation}
\sum_{n=2}^{\bar{N}} a_n > \frac{\bar{a}+4}{2} .
\end{equation}
On the other hand, the positiveness of the coefficients $a_n$, together with \eqref{eq:asymptotic_G_right} and  \eqref{eq:asympt_sum_left}, implies that there exists $\bar{\eta}>1$ such that
\begin{equation}
\sum_{n=2}^N G_n(\bar{\eta}) > \sum_{n=2}^{\bar{N}} G_n(\bar{\eta}) > \frac{\bar{a}+4}{2} \bar\eta >  G( \bar\eta ), \qquad N>\bar{N},
\end{equation}
which again violates Theorem \ref{th:represent_Psi} and 
yields a contradiction. This proves that $\bar{a}\leq 4$ and concludes the proof. 

\end{proof}
For $\eta\in]0,1[$ the situation is less clear. On the one hand, the values in Table \ref{tab:coeff} suggests that the sign of the coefficients $b_n$ oscillates as $n$ grows, and thus we cannot rely on the positiveness (or negativeness) of the coefficients $b_n$ to infer \eqref{eq:asympt_behav_log}. On the other hand, the plot in Figure \ref{fig:ste2} shows that $\sum_{n=2}^N b_n$ is close to $1$ for (not too) large values of $N$. Unfortunately, the numerical complexity of the representation \eqref{eq:bn} makes it very difficult to compute the summation above for larger values of $N$. Therefore, the question whether  \eqref{eq:asympt_behav_log} is actually true remains open.
\begin{figure}[htb]
\centering
\includegraphics[width=0.5\textwidth,height=0.25\textheight]{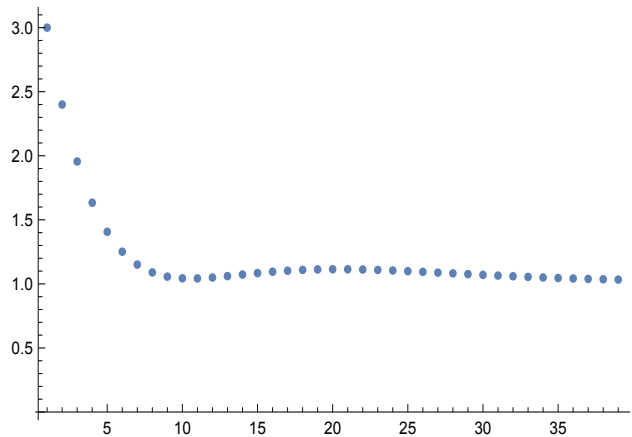} 
\caption{Plot of the sum $\sum_{n=2}^{N} b_n$ for $N$ up to $40$.} 
\label{fig:ste2}
\end{figure} 

\subsection{Proof of Theorem \ref{th:represent_Psi}}\label{subsec:proof}
This section is devoted to the proof of Theorem \ref{th:represent_Psi}, which is preceded by some preliminary results. Hereafter, for any $z\in\mathbb{C}$ and $\delta>0$, we denote by $B_{\delta}(z)$ the complex ball centered at $z$ with radius $\delta$.
\begin{lemma}\label{lemma:prelim}
The function $g$ defined by \eqref{def:g} is holomorphic on $\mathbb{C}$, and 
\begin{equation}\label{eq:der_g_nonzero}
g'(z)\neq 0 ,\qquad z \in {B_{\pi^2}(0)}.
\end{equation}
Furthermore, the restriction of $g$ to $\partial B_{\pi^2}(0)$ 
is a homeomorphism, and its image surrounds $B_1(1)$.
\end{lemma}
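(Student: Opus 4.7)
The plan is to establish the four claims in sequence, using the power-series and product representations of $g$ together with standard complex-analytic tools. For the \emph{entire holomorphy}, the three cases in \eqref{def:g} are restrictions of the single entire power series
\begin{equation}
g(z)\;=\;\sum_{n=0}^\infty\frac{z^n}{(2n+1)!},
\end{equation}
which has infinite radius of convergence by the ratio test; agreement with the piecewise expressions on the real axis follows from the standard Taylor series of $\sinh$ and $\sin$. This same entire function admits the Euler-type product
\begin{equation}
g(z)\;=\;\prod_{k=1}^\infty\left(1+\frac{z}{\pi^2 k^2}\right),
\end{equation}
inherited by analytic continuation from the classical Weierstrass factorization of $\sin$. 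In particular the zeros of $g$ are exactly $\{-\pi^2 k^2\}_{k\geq1}$, all lying outside the open disk $B_{\pi^2}(0)$.

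For the \emph{non-vanishing of $g'$ on $B_{\pi^2}(0)$}, I would differentiate the product logarithmically to obtain
\begin{equation}
\frac{g'(z)}{g(z)}\;=\;\sum_{k=1}^\infty\frac{1}{\pi^2 k^2+z},\qquad z\in B_{\pi^2}(0),
\end{equation}
where the series converges absolutely since $|\pi^2 k^2+z|\geq \pi^2-|z|$ for $k=1$ and grows like $\pi^2 k^2$ for large $k$. Taking imaginary parts gives $\mathrm{Im}(g'/g)=-\mathrm{Im}(z)\sum_k |\pi^2 k^2+z|^{-2}$, which is nonzero whenever $\mathrm{Im}(z)\neq 0$; for real $z\in(-\pi^2,\pi^2)$ every summand is a strictly positive real, so the sum is positive. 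Since also $g(z)\neq 0$ throughout $B_{\pi^2}(0)$, this forces $g'(z)\neq 0$.

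For the \emph{homeomorphism on the boundary} and the \emph{surrounding} claim, I would first upgrade the non-vanishing of $g'$ to univalence of $g$ on a neighborhood of $\overline{B_{\pi^2}(0)}$. On the real segment $[-\pi^2,\pi^2]$ the product representation shows $g$ is strictly increasing from $0$ to $\sinh\pi/\pi$, and the symmetry $g(\bar z)=\overline{g(z)}$ reduces global injectivity to injectivity on the closed upper semicircle, which I would verify by a Noshiro--Warschawski-type argument based on the sign of $\mathrm{Re}(e^{-i\alpha}g'(z))$ along straight-line chords. Once univalence is in hand, $g(B_{\pi^2}(0))$ is a Jordan domain bounded by $g(\partial B_{\pi^2}(0))$, so the image restricted to the boundary circle is automatically a homeomorphism onto a Jordan curve. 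The surrounding claim reduces to $B_1(1)\subset g(B_{\pi^2}(0))$, which by Rouch\'e applied to $g(z)-c=(g(z)-1)-(c-1)$ follows from the boundary estimate $|g(z)-1|\geq 1$ on $|z|=\pi^2$ (with equality only at $z=-\pi^2$, where $g=0$); this estimate is plausible because $-\pi^2$ is the zero of $g$ closest to the origin, and should be extractable from the product representation.

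The main obstacle is the univalence step: non-vanishing of $g'$ only yields local injectivity, and extending to the whole closed disk requires either a direct positivity argument for $\mathrm{Re}(g'(z)/g'(0))$ on the convex disk, or a detour via the argument principle. In the latter approach one computes, for each $c$ in a neighborhood of $1$, the winding number of $g(\pi^2 e^{i\theta})$ about $c$ by counting zeros of $g(z)-c$ in $B_{\pi^2}(0)$; showing this count equals $1$ in a suitable neighborhood of $c=1$ would simultaneously force $g|_{\partial B_{\pi^2}(0)}$ to be a Jordan parametrization and yield the surrounding of $B_1(1)$. Either way, the analytic content sits in controlling $g$ (or its derivative) on the circle $|z|=\pi^2$, and the main technical labor will be in carrying out this boundary analysis rigorously.
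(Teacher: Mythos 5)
Your first two claims are handled correctly, and your argument for \eqref{eq:der_g_nonzero} is genuinely different from the paper's: you exploit the Euler product $g(z)=\prod_{k\geq1}\bigl(1+z/(\pi^2k^2)\bigr)$ and the logarithmic derivative $g'/g=\sum_{k\geq1}(\pi^2k^2+z)^{-1}$, whose imaginary part has the fixed sign of $-\mathrm{Im}\,z$ and whose value is positive on the real segment, so $g'\neq0$ follows at once from $g\neq0$ on $B_{\pi^2}(0)$. This is cleaner than the paper's route, which writes $g'(z)=f(\sqrt z)/(2z^{3/2})$ with $f(z)=z\cosh z-\sinh z$ and shows $f\neq0$ on a quarter-disk by comparing $|f(x+iy)|$ with $|f(x)|$; your version avoids that computation entirely and is a nice simplification of this part.

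However, for the second half of the lemma there is a genuine gap: the two statements that actually carry the analytic content --- injectivity of $g$ on the circle $|z|=\pi^2$, and the lower bound $\bigl|g(\pi^2e^{i\theta})-1\bigr|\geq1$ --- are exactly the ones you leave as ``I would verify'', ``plausible'', ``should be extractable''. Your proposed route is also harder than what the lemma requires. Non-vanishing of $g'$ gives only local injectivity (think of $e^z$), and a Noshiro--Warschawski argument with a chord-dependent rotation $\alpha$ is not a quotable theorem: you would have to establish $\mathrm{Re}\bigl(e^{-i\alpha}g'\bigr)>0$ along each chord, which is precisely the kind of boundary estimate you have not produced. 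Moreover, global univalence of $g$ on (a neighborhood of) the closed disk is not needed at this stage: the paper deliberately proves only the boundary statements here, and the upgrade to a biholomorphism of the open ball onto the inside of the image curve is done afterwards via the topological Lemma \ref{lem:lem_topol} (a Hadamard--Caccioppoli-type result), so your proposal takes on the hardest step without completing it. By contrast, the paper reduces boundary injectivity, via the symmetry $g(\bar z)=\overline{g(z)}$, to two concrete elementary facts (monotonicity of $\theta\mapsto|\sinh(\pi e^{i\theta})|$ on $[0,\pi/2]$ and positivity of $\mathrm{Im}\bigl(e^{-i\theta}\sinh(\pi e^{i\theta})\bigr)$), and proves the surrounding claim by the explicit series estimate
\begin{equation}
\Bigl|\tfrac{\sinh(\pi e^{i\theta})}{\pi e^{i\theta}}-1\Bigr|\geq\tfrac{1}{\pi}\,|I_1(\theta)-I_2|,\qquad I_1(\theta)>\varsigma+\pi>I_2+\pi,
\end{equation}
which is the bound your Rouch\'e argument would also need but does not supply. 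Until you prove the circle injectivity and the estimate $|g-1|\geq1$ on $|z|=\pi^2$, the homeomorphism and surrounding claims remain unproved; your argument-principle framing is fine as a packaging, but the decisive boundary analysis is missing.
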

\begin{proof}
Owing to the Taylor series expansion of $\sinh z$ around $z=0$, we directly obtain
\begin{equation}\label{eq:series_g}
g(z) = \sum_{n=0}^{\infty} \frac{z^n}{(2n+1)!},\qquad r\in\mathbb{C},
\end{equation}
which shows that $g$ is holomorphic on $\mathbb{C}$.

We now prove that $g'\neq 0$ on $B_{\pi^2}(0)$. Differentiating \eqref{def:g} we obtain 
\begin{equation}
g'(z) = \frac{
f(\sqrt{z})}{2 z^{{3}/{2}}}, \qquad z\in \mathbb{C},
\end{equation}
where 
\begin{equation}\label{eq:f_series}
f(z): = z \cosh z - \sinh z = \sum_{n=1}^{\infty}  \frac{2n }{(2n+1)!}z^{2n+1} .
\end{equation}
Therefore, owing to the fact that $g'(0)=1/6$ (by \eqref{eq:series_g}), and to $f(\bar{z}) = \overline{f(z)}$, we have that \eqref{eq:der_g_nonzero} is equivalent to
\begin{equation}\label{eq:fnonnull}
f(z)\neq 0, \qquad z\in \tilde{B}_{\pi}(0)\setminus \{0\},
\end{equation}
where 
\begin{equation}
\tilde{B}_{\pi}(0) := \{ x+i y \in {B}_{\pi}(0) : x,y \geq 0 \}. 
\end{equation}
We  first observe, that
\begin{equation}\label{eq:fnotzero}
f(x)\neq 0, \qquad x\in\R,\ x\neq 0,
\end{equation}
which follows directly from the series representation in \eqref{eq:f_series}.
Finally, a direct computation shows that 
\begin{align}
|f(z)|^2 &= \frac{1}{2} \Big(\left(x^2+y^2-1\right) \cos (2 y)+\left(x^2+y^2+1\right) \cosh (2 x)-2 x \sinh (2 x)-2 y \sin (2 y)\Big) \\
& > 
|f(x)|^2 
\end{align}
for any $z=x+iy \in \tilde{B}_{\pi}(0)$ with $y>0$. This, together with \eqref{eq:fnotzero}, proves \eqref{eq:fnonnull}.

We now prove that $g|_{\partial B_{\pi^2}(0)}$ is a homeomorphism: it is enough to show that it is injective. Owing to 
\begin{equation}\label{eq_conjugate_prop}
\overline{g(z)}=g(\overline{z}), \qquad z\in\mathbb{C},
\end{equation} 
the latter property can be proved by checking that the function $\theta \mapsto |\sinh (\pi e^{i \theta}) |$ 
is strictly decreasing on $[0,\pi/2]$, and that the imaginary part of $e^{-i \theta} \sinh (\pi e^{i \theta})$ is strictly positive for $\theta\in]0,\pi/2[$. We omit the details for brevity. 

Finally, we prove that the image of $g|_{\partial B_{\pi^2}(0)}$ surrounds $B_1(1)$. Owing again to \eqref{eq_conjugate_prop}, 
it is enough to prove that
\begin{equation}\label{eq:circonf}
\Big|  \frac{\sinh( \pi e^{i \theta} )}{\pi e^{i \theta}}  -1  \Big|
\geq 1, \qquad \theta \in [0,\pi/2].
\end{equation}
We have
\begin{align}
\Big|  \frac{\sinh( \pi e^{i \theta} )}{\pi e^{i \theta}}  -1  \Big|= 
\frac{1}{\pi} \Big| \sum_{n=1}^{\infty} \frac{ ( \pi e^{i \theta} )^{2n+1}}{(2n+1)!}  \Big|  \geq \frac{1}{\pi} | I_1(\theta) - I_2 |,
\end{align}
with 
\begin{equation}
I_1(\theta) = \Big| \sum_{n=1}^{3} \frac{ ( \pi e^{i \theta} )^{2n+1}}{(2n+1)!}  \Big|  , \qquad I_2 = \sum_{n=4}^{\infty} \frac{ \pi^{2n+1}}{(2n+1)!} .
\end{equation}
Now, it is not difficult to prove that there exists $\varsigma >0 $ such that
\begin{equation}
I_1(\theta)  >   \varsigma   +  \pi  >  I_2 + \pi,   \qquad \theta \in [0,\pi/2],
\end{equation}
which proves \eqref{eq:circonf}.
\end{proof}
We are now in the position to prove Theorem \ref{th:represent_Psi}.
\begin{proof}[Proof of Theorem \ref{th:represent_Psi}]
By the change of variable $\xi : = \frac{\eta-1}{\eta}$, 
it suffices to prove
\begin{equation}\label{eq:change_var}
F(\xi):= (1-\xi)G\Big(  \frac{1}{1-\xi}  \Big)   =   \sum_{n= 2}^{\infty} a_n \xi^n , \qquad \xi\in ] -1 , 1  [  .
\end{equation}
We prove \eqref{eq:change_var} by showing that $F$ 
is holomorphic on $B_1(0)$. Explicitly, we have 
\begin{equation}
F(\xi):= 
(1-\xi) \bigg( \frac{2}{1-\xi} -2 (1-\xi - 2/\pi ) \sqrt{\frac{(1-\xi)^{-2}+g^{-1}\left(1-\xi\right)}{(1-\xi - 2/\pi )^2}}+g^{-1}\left(1-\xi\right) \bigg).
\end{equation}
Lemma \ref{lemma:prelim} combined with Lemma \ref{lem:lem_topol} imply that $g|_{B_{\pi^2}(0)}$ is a biholomorphism on its image. In particular, the inverse $g^{-1}$ is well defined and holomorphic on $B_1(1)$. 
Setting now
\begin{equation}
f(z):={z ^{-2}+g^{-1}(z)}, \qquad z\in B_1(1),
\end{equation}
it is straightforward to check that 
\begin{equation}
f\Big(\frac{2}{\pi} \Big)  = 0 = f' \Big(\frac{2}{\pi} \Big).
\end{equation}
The first equality above follows directly from \eqref{eq:sgn_expl}, whereas the second stems from \eqref{eq:sgn_expl} together with
\begin{equation}
f'(2/\pi) = -2 z^{-3}  +  \frac{2 g^{-1}(z)}{\sqrt{ 1+ z g^{-1}(z) } - z}\bigg|_{z=2/\pi}, \qquad z\in\R.
\end{equation}
It follows that
\begin{equation}
z\mapsto \frac{f(z)}{(z - 2/\pi )^2}
\end{equation}
is holomorphic on $B_1(1)$. it is also possible to observe that $f\big(B_1(1)\big)$ does not contain non-negative real numbers, which finally yields holomorphicity of $F$ on $B_1(0)$.
\vspace{5pt}

\end{proof}


\section{Yosida's parametrix construction}\label{sec:yosida}

For the sake of clarity, before dwelling on the details of our extension of Yosida's parametrix method, we outline the general parametrix construction of the fundamental solution $p$, for a general choice of the {parametrix} function. 

\subsection{General parametrix construction}\label{sec:general_parametrix}

Consider a given function $H(z;w)$, hereafter referred to as \emph{parametrix function}, or simply \emph{parametrix}, which enjoys suitable regularity and boundedness properties, together with the Dirac delta property ii) in Definition \ref{def:fund_sol}, and such that
\begin{equation}\label{eq:new}
H(t,x_1,x_2;T,y_1,y_2)\to 0 \qquad \text{as } y_2\to x_2^+.
\end{equation}
Following the standard approach, we look for $p=p(z;w)$ of the form
\begin{equation}\label{eq:ansaz_p}
p(z, w) = H(z, w)+ 
\int_{z \prec \zeta\prec w}
 H(z;\zeta)  \Phi(\zeta;w)  \dd \zeta ,\qquad z\prec w.
\end{equation}
By formally applying Point i) in Definition \ref{def:fund_sol}, we obtain
\begin{align}
0=\Lc p(z,w) = \Lc H(z, w) + \Lc \int_{z \prec \zeta\prec w}
 H(z;\zeta)  \Phi(\zeta;w)  \dd \zeta 
 \intertext{(owing to the Dirac delta property $H(t,x;t,\xi) = \delta_x$ and to \eqref{eq:new})}
 = \Lc H(z, w) + \int_{z \prec \zeta\prec w}   \Lc  H(z;\zeta)  \,   \Phi(\zeta;w)  \dd \zeta   - \Phi(z,w),
\end{align}
which yields 
\begin{equation}\label{eq:picard_PHI}
\Phi(z,w) =  \Lc H(z, w) + \int_{z \prec \zeta\prec w}   \Lc  H(z;\zeta)  \,   \Phi(\zeta;w)  \dd \zeta, \qquad z\prec w.
\end{equation}
Now by Picard iteration we look for a solution to \eqref{eq:picard_PHI} of the form
\begin{equation}\label{eq:parametrix_series}
 \Phi(z;\zeta) 
   =   \sum_{n=0}^{\infty}  
 K_n(z;\zeta), \qquad z\prec w,
\end{equation}
where the functions $K_n$ are defined through the recursion 
\begin{align}\label{eq:param_series}
 K_0 (z;w)  =    \Lc H(z;w)         ,  
&&  K_n (z;w)    =    
\int_{z \prec \zeta\prec w}
\Lc H(z;\zeta)  K_{n-1}(\zeta,w)  \dd \zeta        ,
\qquad n\geq1.
\end{align}

In order for the construction above to be formalized, one needs to find a parametrix $H$ satisfying suitable estimates so that:
\begin{itemize}
\item[a)] the series in \eqref{eq:parametrix_series} is convergent;
\item[b)] the properties i) and ii) of Definition \ref{def:fund_sol} can be verified.
\end{itemize}
Furthermore, as a by-product of \eqref{eq:ansaz_p} and of the estimates for $H$ and $\Phi$, one can prove bounds for $p$ and its derivatives. In our case, thanks to a specific choice of $H$, we will be able to prove the estimates in Theorem \ref{th:main} and Theorem \ref{th:lower}.
\subsection{Generalized Yosida's parametrix}
Following Yoshida \cite{yosida1953fundamental}, we introduce our parametrix $H$ in two steps. We first define a \emph{pre-parametrix} $H_1$ as a function having the expected asymptotic behavior both away from the pole, and close to the pole. As we will see, the function $H_1$ is not suitable to define the integrals appearing in \eqref{eq:param_series}. Hence, in the second step, we introduce a correction term $\bf{u}$ that makes the functions $K_n$'s well defined if we use $H := {\bf u} H_1$ as a parametrix.


Hereafter throughout the whole section we will set $\sigma=1$ without loss of generality (see Remark \ref{rem:invariance}).
For any $z=(t,x), w=(T,y)\in \R\times D$ with $z\prec w$, 
define the \emph{pre-parametrix} $H_1$ as in \eqref{eq:H1_intro}. Explicitly, 
\begin{equation}\label{eq:parametrix_pre_explicit}
H_1(z;w) =  \frac{\sqrt{12}}{2\pi (T-t)^2 y_1^2} e^{-\frac{1}{2}\Psi(z;w)}.
\end{equation}

The idea behind this definition is the following. On one hand, the cost function $\Psi$ at the exponential is supposed to provide the exact asymptotic behavior of the fundamental solution away from the pole. On the other hand, the square root of the determinant of ${\bf C}$ in the denominator yields the same singular behavior near the pole as the fundamental solution of the \emph{frozen} operator 
\begin{equation}
\tilde\Lc_y ={\partial_t + x_1\partial_{x_2}}
 + \frac{ y_1^2}{2} \partial_{x_1 x_1}, \qquad (t,x_1,x_2)\in \R^3.
\end{equation}
In practice, this specific choice for the denominator ensures that $H_1(z,w)$ enjoys the Dirac Delta property ii) in Definition \ref{def:fund_sol}. The exact statement for this is in Proposition \ref{prop:delta_Dirac} below.

Now, if we were to choose $H_1$ as a parametrix, the first step towards proving the convergence of the series \eqref{eq:parametrix_series} would be to bound $\Lc H_1$ from above. To do so, one first computes 
\begin{align}
\partial_t H_1 &=  \bigg( \frac{2}{T-t} -\frac{1}{2} \partial_t\Psi \bigg)    H_1,\\
 \partial_{x_1} H_1 &=  -\frac{1}{2} \big( \partial_{x_1}\Psi   \big)  H_1,\\
 \partial_{x_2} H_1 &=  -\frac{1}{2} \big( \partial_{x_2}\Psi   \big)  H_1 ,\\
  \partial_{x_1 x_1} H_1 &=  \bigg( \frac{1}{4}\big(\partial_{x_1}\Psi\big)^2 -\frac{1}{2} \partial_{x_1 x_1}\Psi \bigg)     H_1,
\end{align}
and obtains
\begin{equation}\label{eq:LH_bis}
\Lc H_1(z;w) =\bigg( \Big( \frac{x_1}{2} \partial_{x_1}  \Psi(z;w)  \Big)^2 - Y\Psi(z;w)  + f(z,w)     \bigg) H_1(z;w),
\end{equation}
with
\begin{equation}\label{eq:fg_bis}
f(t,x;T,y) := \frac{2}{T-t} - \frac{x_1^2  \partial_{x_1 x_1}\Psi(t,x;T,y)}{4} .
\end{equation}
At first glance, a twofold problem appears: 
\begin{itemize}
\item[-] the function $ f(z,w)$ is singular of order $\frac{1}{T-t}$;
\item[-] the functions
\begin{equation}
\Big( \frac{x_1}{2} \partial_{x_1}  \Psi(z;w)  \Big)^2\quad \text{and}\quad Y\Psi(z;w)
\end{equation}
are singular of order $\frac{1}{(T-t)^2}$.
\end{itemize}
In light of the space-time convolutions that define the terms $K_n$ in \eqref{eq:param_series}, the presence of these singular terms seems to undermine the construction using $H_1$. However, the next lemma shows that the singular terms of order $\frac{1}{(T-t)^2}$ cancel each other out.
\begin{lemma}[HJB equation]\label{lem:HJB_eq}
For any $w=(T,y)\in \R\times D$, the function $\Psi(\cdot,\cdot;T,y)
$ 
satisfies 
\begin{equation}\label{eq:fund_identity_dim_n}
Y\Psi(z;w) = \Big( \frac{x_1}{2} \partial_{x_1}  \Psi(z;w)  \Big)^2
, \qquad z=(t,x)\in \R\times D, \ z\prec w . 
\end{equation}
Furthermore, the optimal control $\omega
$ for the problem \eqref{eq:control1}-\eqref{eq:optimal_curves} satisfies
\begin{equation}\label{eq:control}
 \omega
 (s) = - \frac{\gamma_{1}(s)}{2} \partial_{\gamma_1}  \Psi\big(s,\gamma(s);w\big), \qquad s\in[t,T].
\end{equation}
\end{lemma}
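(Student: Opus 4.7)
The plan is to derive both assertions simultaneously from the Hamilton--Jacobi--Bellman (HJB) equation associated to the control problem \eqref{eq:control1}-\eqref{eq:optimal_curves}. Since $\sigma=1$ throughout Section \ref{sec:yosida}, the controlled dynamics reads $\dot\gamma_1 = \omega\gamma_1$, $\dot\gamma_2 = \gamma_1$, with running cost $L(\omega)=|\omega|^2$. By the dynamic programming principle, the value function $\Psi(\cdot\,;w)$ satisfies
\begin{equation}
\partial_t\Psi(z;w) + x_1\partial_{x_2}\Psi(z;w) + \min_{\omega\in\R}\Big\{\omega^2 + \omega\, x_1\,\partial_{x_1}\Psi(z;w)\Big\}=0,
\end{equation}
for $z=(t,x)\in\R\times D$ with $z\prec w$. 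The quadratic in $\omega$ is minimized at the unique feedback
\begin{equation}
\omega^{*}(z;w) = -\frac{x_1}{2}\,\partial_{x_1}\Psi(z;w),
\end{equation}
where it takes the value $-\bigl(\tfrac{x_1}{2}\partial_{x_1}\Psi\bigr)^{2}$. Substituting and recalling $Y=\partial_t+x_1\partial_{x_2}$ from \eqref{eq:field_Y} yields \eqref{eq:fund_identity_dim_n}. Evaluating the feedback along the optimal trajectory $\gamma$, so that $(z;w)\mapsto(s,\gamma(s);w)$, gives \eqref{eq:control}.

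To make this argument rigorous I would proceed in two steps. First, I would verify that $\Psi$ is of class $C^{1,2}$ on the relevant open set: this follows from the explicit representation \eqref{eq:Psi_explicit}, combined with the fact that $g^{-1}$ is real-analytic on $(0,+\infty)$, which in turn is a consequence of Lemma \ref{lemma:prelim} (precisely, $g'\ne 0$ on a neighborhood of the positive real axis). Second, I would invoke a verification theorem: given the smooth candidate $\Psi$ satisfying the HJB equation and the admissible control $\omega^{*}$ defined by the feedback, one shows by Itô/chain rule along an arbitrary admissible trajectory that $\int_t^T|\omega|^2\,\dd s\geq \Psi(t,x;T,y)$, with equality when $\omega=\omega^{*}$; this simultaneously proves both \eqref{eq:fund_identity_dim_n} and identifies $\omega^{*}$ as the optimal control, giving \eqref{eq:control}.

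An alternative, entirely analytic route is a direct verification of \eqref{eq:fund_identity_dim_n} by differentiating \eqref{eq:represent_psi}. Using the identity $g\bigl(g^{-1}(\eta^{-1})\bigr)=\eta^{-1}$ and the left-invariance \eqref{eq:Psi_invariance}, one can reduce the check to the base point $w=\id$, where $\Psi$ depends only on $(t,x_1,x_2)$ through ${\bf h}$ and the ratio $x_1$; after computing $\partial_{x_1}\Psi$, $\partial_{x_2}\Psi$ and $\partial_t\Psi$, both sides collapse to the same combination of $g^{-1}$ and its derivatives.

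The main obstacle will be the smoothness issue: the sign function $\sign_{z,w}$ appearing in \eqref{eq:Psi_explicit} is locally constant away from the set $\{{\bf h}=\pi/2\}$, but the whole expression \eqref{eq:represent_psi} must be shown to extend smoothly across this threshold (as numerically suggested by the regularity of $G$ displayed in Figure \ref{fig:ste2}); this would already be implicit in the holomorphic extension argument of the proof of Theorem \ref{th:represent_Psi}. Once smoothness is established, the HJB route is clean and yields both claims at once.
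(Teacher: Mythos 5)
Your heuristic derivation is exactly the paper's: reduce to the HJB form, minimize the quadratic in $\omega$ to get the feedback $\omega^{*}=-\tfrac{x_1}{2}\partial_{x_1}\Psi$, and read off \eqref{eq:fund_identity_dim_n} and \eqref{eq:control}. The problem is in the step you propose to make this rigorous. A verification theorem runs in the wrong logical direction for the first claim: it takes as \emph{hypothesis} a smooth function satisfying the HJB equation and concludes that this function dominates (and, with the equality analysis, equals) the optimal cost and that the feedback control is optimal. It cannot ``simultaneously prove'' \eqref{eq:fund_identity_dim_n}, because \eqref{eq:fund_identity_dim_n} is precisely the statement that $\Psi$ satisfies the HJB equation, which you would be assuming. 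Since $\Psi$ is \emph{defined} as the value function, what is needed is the necessary-condition direction: either the dynamic programming principle differentiated at the initial time (legitimate once smoothness of $\Psi$ and enough regularity of the optimal control are in hand), or a direct computation showing that the explicit formula \eqref{eq:Psi_explicit}/\eqref{eq:represent_psi} satisfies \eqref{eq:fund_identity_dim_n}. You mention both (the DPP in your first paragraph, the direct differentiation as an ``alternative route''), but neither is the step you actually designate as the rigorous one, and the direct computation is only sketched; as written, the plan begs the question.

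For comparison, the paper's proof uses \eqref{eq:Psi_invariance} to reduce to $w=\id$, rewrites \eqref{eq:fund_identity_dim_n} as the inf-form HJB \eqref{eq:HJB_inf}, recasts the problem in Mayer form by adjoining the auxiliary state $\gamma_0$ with $\dot\gamma_0=\omega^2$ as in \eqref{eq:control1_ext}--\eqref{eq:optimal_curves_ext} (so that $\bar\Psi=\Psi+x_0$), and then invokes Theorem IV-4.1 of \cite{fleming2012deterministic}: a necessary-condition theorem stating that a smooth value function satisfies the HJB equation and that the optimal control attains the pointwise minimum, which yields \eqref{eq:control} as well. Your emphasis on smoothness is well placed (the paper uses it as the hypothesis that makes the cited theorem applicable, and the only delicate point is indeed the crossing ${\bf h}=\pi/2$, where the vanishing of $\eta^{-2}+g^{-1}(\eta)$ to second order, as in the proof of Theorem \ref{th:represent_Psi} and Lemma \ref{lemma:prelim}, removes the apparent sign discontinuity). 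To repair your write-up, either replace the verification step by an appeal to such a necessary-conditions/DPP theorem, or carry out the direct differentiation of \eqref{eq:represent_psi} and then use your verification inequality only to deduce \eqref{eq:control} via the equality case.
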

The proof is deferred until Appendix \ref{sec:proof_lemma_HJB}.

By applying \eqref{eq:fund_identity_dim_n} to \eqref{eq:LH_bis} we obtain
\begin{equation}\label{eq:LH_quat}
\Lc H_1(z;w) = f(z,w)   H_1(z;w).
\end{equation}
In order to get rid of the singularity of order $\frac{1}{T-t}$, we define the \emph{parametrix} $H$ as 


\begin{equation}\label{eq:true_parametrix}
H(z;w): =  H_1(z;w) {\bf u}(z;w),
\end{equation}
where 
${\bf u}$ is a regular function, suitably chosen so as to obtain a uniform bound for $\Lc H(z,w)$. 
By \eqref{eq:LH_quat} we obtain
\begin{align}
\Lc H(z;w)& = \Big({f(z,w) {\bf u}(z;w) - g(z,w) \partial_{x_1} {\bf u}(z;w)  }  + \Lc {\bf u}(z;w)\Big) H_1(z;w)\\
& = \Big(  \big(Y - g(z;w) \partial_{x_1} + f(z,w) \big)  {\bf u}(z;w)  +\frac{ x_1^2}{2}  \partial_{x_1 x_1} {\bf u}(z;w) \Big) H_1(z;w) ,\label{eq:LH}
\end{align}
with $f$ as given in \eqref{eq:fg_bis} and
\begin{equation}\label{eq:fg}
g(z;w): =\frac{ x_1^2}{2}  \partial_{x_1}\Psi(z;w).
\end{equation}
The idea is then to find ${\bf u}$ that verifies
\begin{equation}\label{eq:transport}
\big(Y - g(z;w) \partial_{x_1} \big)  {\bf u}(z;w)  +  f(z;w) {\bf u}(z;w) = 0 ,\qquad  z\prec w,
\end{equation}
and such that 
\begin{equation}
\frac{ x_1^2}{2}  \partial_{x_1 x_1} {\bf u}(z;w)
\end{equation}
is bounded with respect to $T-t$. We also impose that 
that the initial condition
\begin{equation}\label{eq:identity_u}
{\bf u}(w;w) = 1
\end{equation}
is verified in some sense. Intuitively, the latter is required in order for the Dirac Delta property, as $(T-t)\to 0^+$, to be transferred from $H_1$ to $H$. We seek a solution to \eqref{eq:transport}-\eqref{eq:identity_u} by employing the method of the characteristic curves. We have the following crucial   
\begin{lemma}\label{lem:radial_field}
For any $w=(T,y)\in\R\times D$, the integral curves of the vector field 
\begin{equation}
z\mapsto Y -  g(z,w) \partial_{x_1}  , \qquad z=( t, x)\in\R\times D,\ z\prec w,
\end{equation}
are the optimal curves $\big(s,\gamma(s)\big)$ for the optimal control problem \eqref{eq:control1}-\eqref{eq:optimal_curves} with $\gamma(T)=y$.
\end{lemma}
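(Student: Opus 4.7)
The plan is to verify that the ODE system defining the integral curves of the vector field $z\mapsto Y - g(z,w)\partial_{x_1}$ coincides with the ODE system satisfied by the optimal trajectories of the control problem \eqref{eq:control1}-\eqref{eq:optimal_curves}, and then conclude by uniqueness of ODE solutions.

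First I would spell out the integral curves in coordinates. Since $Y = \partial_t + x_1\partial_{x_2}$ and $g(z;w) = \frac{x_1^2}{2}\partial_{x_1}\Psi(z;w)$ by \eqref{eq:fg}, a curve $s\mapsto(s,\gamma_1(s),\gamma_2(s))$ is an integral curve of $Y - g(\cdot,w)\partial_{x_1}$ if and only if
\begin{align}
\dot\gamma_1(s) &= -\frac{\gamma_1(s)^2}{2}\,\partial_{x_1}\Psi\bigl(s,\gamma(s);w\bigr),\\
\dot\gamma_2(s) &= \gamma_1(s).
\end{align}

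Second, I would invoke Lemma \ref{lem:HJB_eq}, which provides the closed form \eqref{eq:control} for the optimal control, namely $\omega(s) = -\frac{\gamma_1(s)}{2}\partial_{\gamma_1}\Psi\bigl(s,\gamma(s);w\bigr)$. Substituting this into the state equations \eqref{eq:optimal_curves} (with $\sigma=1$, as allowed by Remark \ref{rem:invariance}) yields the identity $\dot\gamma_1(s) = \omega(s)\gamma_1(s) = -\frac{\gamma_1(s)^2}{2}\partial_{\gamma_1}\Psi\bigl(s,\gamma(s);w\bigr)$, together with $\dot\gamma_2(s) = \gamma_1(s)$. This is exactly the same first-order system as above.

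Finally, I would close the argument by standard ODE uniqueness. On the open set $\{z\in\R\times D:z\prec w\}$ the function $\Psi(\cdot;w)$ is smooth, by the explicit representation \eqref{eq:Psi_explicit}, so the drift of the ODE is locally Lipschitz. Hence any integral curve passing through a given interior point and the optimal curve having that same point as initial condition and ending at $(T,y)$ are two solutions of the same Cauchy problem and must coincide. This gives the two-sided identification of the two families.

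The real content of the lemma is supplied by the feedback characterization \eqref{eq:control} of the optimal control already established in Lemma \ref{lem:HJB_eq}; once that identity is available, the present statement reduces to matching two ODE systems and invoking uniqueness, so the main difficulty has been pushed back to the proof of Lemma \ref{lem:HJB_eq} in Appendix \ref{sec:proof_lemma_HJB}.
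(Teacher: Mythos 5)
Your proposal is correct and follows essentially the same route as the paper: the paper's proof likewise substitutes the feedback form \eqref{eq:control} of the optimal control from Lemma \ref{lem:HJB_eq} into the state equations \eqref{eq:optimal_curves}, obtaining exactly the system $\dot\gamma_1 = -g(s,\gamma(s);w)$, $\dot\gamma_2=\gamma_1$. Your closing ODE-uniqueness step (using smoothness of $\Psi(\cdot;w)$) is a small completion of the identification that the paper leaves implicit, but it does not change the substance of the argument.
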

\begin{proof}
By \eqref{eq:control}, we have 
\begin{equation}\label{eq:optimal_curves_bis}
\begin{cases}
\dot{\gamma}_{
1}(s) = -  g\big(s,\gamma
(s);w\big) \\
\dot{\gamma}_{
2}(s) = \gamma_{
1}(s)
\end{cases},\qquad
t<s<T,
\end{equation}
which completes the proof.
\end{proof}
Applying now Lemma \ref{lem:radial_field}, together with the method of the characteristic curves, we find that the solution to \eqref{eq:transport}-\eqref{eq:identity_u} has to be
\begin{align}
{\bf u}(z;w) &:= \exp\bigg( \int_t^T  f\big(s,\gamma(s);T,y\big)  \dd s  \bigg) \\
& =  \exp\bigg( \int_t^T \Big[  \frac{2}{T-s} - \frac{\gamma_{
1}^2(s)  \partial_{\gamma_1 \gamma_1}\Psi\big(s,\gamma
(s);w\big)}{4} \Big]  \dd s  \bigg) ,
\qquad z\prec w,
\label{eq:u}
\end{align}
where $\gamma
$ denotes the optimal trajectory of the control problem \eqref{eq:control1}-\eqref{eq:optimal_curves}. In the next section we will prove that such ${\bf u}$ is well defined, and that the iterative construction described in Section \ref{sec:general_parametrix} converges to the  fundamental solution $p$ of $\Lc$, with the parametrix function $H$ given by \eqref{eq:true_parametrix}-\eqref{eq:parametrix_pre_explicit}-\eqref{eq:u}.

\subsection{Convergence of the Picard series and proofs of Theorems \ref{th:main} and \ref{th:lower}}

We prove Theorems \ref{th:main} and \ref{th:lower} by estimating the series \eqref{eq:parametrix_series} and the integral in \eqref{eq:ansaz_p} with $H$ as defined in the previous section, and by proving that $p$ as defined by \eqref{eq:ansaz_p} is indeed the fundamental solution of $\Lc$. 

We start with the following two propositions, whose proofs are deferred until Sections \ref{th:u} and \ref{prop:delta_Dirac}.
\begin{proposition}\label{th:u}
The function ${\bf u}$ given by \eqref{eq:u} is well defined and we have
\begin{equation}\label{eq:rapp_u1}
{\bf u}(z;w) = v\Big(  {4}\, g^{-1}\Big(\frac{1}{{\bf h}(z;w)} \Big)  \Big), 
\end{equation}
with
\begin{equation}\label{eq:rapp_u2}
v(\eta) =\begin{cases}
   \frac{| \eta |}{2 \sqrt{3 \sqrt{\eta} \sinh(\sqrt{\eta})-6 \cosh(\sqrt{\eta})+6}} ,\quad & \eta\in\, ]-4\pi^2,+\infty[\, \setminus \{ 0 \} \\
   1 ,\quad & \eta =0
   \end{cases}.
\end{equation}
Furthermore, 
the function ${\bf u}(\cdot;\zeta)$ is smooth and solves \eqref{eq:transport}. 

Finally there exists a universal constant $\kappa>0$ such that  
\begin{align}
& \kappa^{-1}  {\bf 1}_{]0,1]}\big(  {\bf h}(z;w) \big) {\bf h}(z;w)   \leq |{\bf u}(z;w)|  \leq \kappa \big( \sqrt{{\bf h}} + {\bf h}  \big) (z;w), \label{eq:est_u}\\
\label{eq:def_h}
&\big| { x_1^2}  \partial_{x_1 x_1} {\bf u}(z;w)  \big| \leq \kappa\, \sqrt{{\bf h}(z;w)},
\end{align}
for any $z=(t,x),w=(T,y)\in\R\times D$ with 
$z\prec w$.
\end{proposition}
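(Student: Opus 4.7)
The plan is to exploit the method of characteristics already invoked to derive \eqref{eq:u}: the transport equation \eqref{eq:transport} with the terminal normalisation ${\bf u}(w;w)=1$ has a unique solution along the optimal trajectories $\gamma$ of the control problem, given by the exponential integral \eqref{eq:u}. The first task is to establish that the integrand is actually integrable up to $s=T$. The term $\frac{2}{T-s}$ is not integrable on its own, so I must show that $\frac{\gamma_1^2(s)\partial_{\gamma_1\gamma_1}\Psi(s,\gamma(s);w)}{4}$ contains a matching singularity at $s=T$ that cancels the blow-up, leaving a bounded integrand. This cancellation is not accidental: it is the very reason for factoring $H=H_1\cdot{\bf u}$ and for the specific choice of $H_1$.

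The explicit evaluation of the integral hinges on the conservation of the quantity $E_{s,\gamma(s);T,y}$ along the optimal trajectory ending at $w=(T,y)$. This conservation law is standard from the Hamiltonian structure of the optimal control problem, and can be verified by differentiating \eqref{def:E} along $\gamma$ using \eqref{eq:optimal_curves_bis} and Lemma \ref{lem:HJB_eq}. Granted this, along the curve we have ${\bf h}(s,\gamma(s);w)=1/g(E(T-s)^2/4)$, so the integrand depends on $s$ only through $\eta(s):=E(T-s)^2$. Substituting the explicit formula \eqref{eq:Psi_explicit} for $\Psi$ into $\gamma_1^2\partial_{\gamma_1\gamma_1}\Psi$ and changing variable to $\eta$, the integral collapses to an elementary primitive whose evaluation at $\eta=\eta(t)$ and $\eta=\eta(T)=0$ yields exactly $\log v(\eta(t))$ with $v$ as in \eqref{eq:rapp_u2}. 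Continuity at $\eta=0$ follows from the Taylor expansion $3\sqrt{\eta}\sinh\sqrt{\eta}-6\cosh\sqrt{\eta}+6=\eta^2/4+O(\eta^3)$, which yields $v(0)=1$. Since $\eta(t)=4g^{-1}(1/{\bf h}(z;w))$, we obtain \eqref{eq:rapp_u1}. Smoothness of ${\bf u}(\cdot;w)$ then follows from smoothness of $v$, of $g^{-1}$ on its domain (Lemma \ref{lemma:prelim}), and of $z\mapsto{\bf h}(z;w)$; the fact that ${\bf u}$ solves \eqref{eq:transport} is built into the construction by characteristics.

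Finally, the bounds \eqref{eq:est_u} and \eqref{eq:def_h} reduce to asymptotic analysis of $v$ and its derivatives on $\,]-4\pi^2,+\infty[\,$. Using the explicit formula \eqref{eq:rapp_u2}, as $\eta\to+\infty$ (so ${\bf h}\to 0^+$) one has ${\bf h}\sim\sqrt{\eta}\,e^{-\sqrt{\eta}/2}$ and $v(\eta)\sim\eta^{3/4}e^{-\sqrt{\eta}/2}/\sqrt{6}$, whence $v/\sqrt{\bf h}\to 0$ and $v/{\bf h}\to+\infty$, giving simultaneously the upper bound $|{\bf u}|\leq C\sqrt{{\bf h}}$ and the lower bound $|{\bf u}|\geq C^{-1}{\bf h}$; near $\eta=0$ both $v$ and ${\bf h}$ are close to $1$; and as $\eta\to -4\pi^2$ (equivalently ${\bf h}\to+\infty$) $v$ grows linearly in ${\bf h}$, giving $|{\bf u}|\leq C{\bf h}$. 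Gluing these three regimes produces \eqref{eq:est_u}. The estimate \eqref{eq:def_h} is obtained by the chain rule applied to ${\bf u}(z;w)=v\bigl(4g^{-1}(1/{\bf h}(z;w))\bigr)$: explicit computation of $x_1^k\partial_{x_1}^k{\bf h}$ for $k=1,2$ from \eqref{eq:h}, together with uniform control of $v'$ and $v''$, delivers the claim. The main technical obstacle is the regime $\eta\to -4\pi^2$ (equivalently ${\bf h}\to+\infty$), in which several factors simultaneously diverge and careful tracking of cancellations is required in order to establish uniform bounds across the whole parameter range rather than merely asymptotically.
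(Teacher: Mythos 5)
Your proposal is correct and follows essentially the same route as the paper's proof: conservation of $E$ along the optimal trajectory reduces the integrand of \eqref{eq:u} to a function of the single variable $E(T-s)^2$, the $\tfrac{2}{T-s}$ singularity cancels, an elementary primitive yields \eqref{eq:rapp_u1}--\eqref{eq:rapp_u2}, the transport equation is verified along the characteristics of Lemma \ref{lem:radial_field}, and the bounds \eqref{eq:est_u}--\eqref{eq:def_h} follow from the asymptotics of $v$ in the regimes ${\bf h}\to 0^+$ and ${\bf h}\to+\infty$ glued by continuity, plus the chain rule through $g^{-1}$ and ${\bf h}$. One small imprecision: as $\eta\to-4\pi^2$ (i.e.\ ${\bf h}\to+\infty$) one actually has $v\sim \tfrac{\pi}{\sqrt{3}}\sqrt{{\bf h}}$ rather than linear growth in ${\bf h}$, but since you only use this regime for the upper bound, your claimed estimate $|{\bf u}|\leq C\,{\bf h}$ remains valid (merely non-sharp) and the argument goes through.
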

The next statement is crucial in order to check that parametrix $H$ enjoys the Dirac delta property ii) in Definition \ref{def:fund_sol}.
\begin{proposition}\label{prop:delta_Dirac}
For  
any function $\tilde v\in C(\R^+)$ bounded 
by a power function, such that $\tilde v(1)=1$, we have
\begin{equation}\label{eq:delta_H1}
\lim_{
T\to 0^+}\ \int\limits_{(1,0)\prec \xi }\tilde v\big({\bf h}(
\id;T, \xi)\big) H_1(
\id;T, \xi) \varphi(\xi) \dd \xi  =  \varphi(1,0), \qquad \varphi\in C_b(D).
\end{equation}
\end{proposition}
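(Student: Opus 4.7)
The plan is to prove Proposition \ref{prop:delta_Dirac} by a localization-plus-rescaling argument in the anisotropic parabolic scaling naturally associated to the Kolmogorov–Langevin operator. The key observation is that, with $z = \id = (0,1,0)$ and end-time $T$, the natural trajectory $e^{\delta Y}\id = (\delta,1,\delta)$ reaches the unique zero $\xi^{*} = (1,T)$ of the map $\xi \mapsto \Psi(\id;T,\xi)$, and $\xi^{*} \to (1,0)$ as $T\to 0^{+}$. The mass of $H_{1}(\id;T,\cdot)$ concentrates on $\xi^{*}$, and the $\tilde v$-factor has value $1$ there because ${\bf h}(\id;T,\xi^{*}) = 1$.

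First I would perform the anisotropic change of variables $\xi_{1} = 1 + \sqrt{T}\,u_{1}$, $\xi_{2} = T + T^{3/2}\,u_{2}$, whose Jacobian equals $T^{2}$. The prefactor of $H_{1}$ becomes $\frac{\sqrt{12}}{2\pi(1+\sqrt{T}u_{1})^{2}}$, the admissible region $\{\xi_{1}>0,\ \xi_{2}>0\}$ dilates to all of $\R^{2}$ as $T\to 0^{+}$, and
\[
{\bf h}(\id;T,\xi(T,u)) \;=\; \frac{\sqrt{1+\sqrt{T}u_{1}}}{\,1+\sqrt{T}u_{2}\,} \;\xrightarrow[T\to 0^{+}]{}\; 1 .
\]
Using the Taylor expansion \eqref{eq:taylor_Psi} (with $\sigma = 1$, applied via the group invariance \eqref{eq:Psi_invariance}) together with the explicit quadratic form \eqref{eq:quadratic_form} for $\Psi_{\xi_{1}}$ evaluated at $\xi_{1}$ near $1$, a short calculation in the scaled variables gives
\[
\Psi\bigl(\id;T,\xi(T,u)\bigr) \;\xrightarrow[T\to 0^{+}]{}\; u_{1}^{2} + 3(2u_{2}-u_{1})^{2}.
\]
Combined with the continuity of $\tilde v$ at $1$ and of $\varphi$ at $(1,0)$, the integrand converges pointwise to
\[
\frac{\sqrt{12}}{2\pi}\,\varphi(1,0)\,\exp\!\Bigl(-\tfrac{1}{2}\bigl[u_{1}^{2}+3(2u_{2}-u_{1})^{2}\bigr]\Bigr),
\]
whose integral over $\R^{2}$ is exactly $1$ — it is the Gaussian density of the frozen Langevin operator at $y_{1}=1$, cf.\ \eqref{eq:density_langevin}–\eqref{eq:quadratic_form}.

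The main obstacle is producing a dominating function integrable uniformly in $T\in (0,T_{0}]$, so as to apply Lebesgue's theorem and pass the limit under the integral. I would split the rescaled integration region in two. On a fixed ball $\{|u|\leq R\}$, the uniform quadratic control of $\Psi$ around its minimum at $u=0$ yields $\Psi \geq c\,(u_{1}^{2}+(2u_{2}-u_{1})^{2})$ for $T$ small enough, while $\tilde v({\bf h})$ and the prefactor are both bounded. Outside the ball, I would invoke the explicit representation \eqref{eq:represent_psi} together with the strict convexity of $G$ and its asymptotics \eqref{eq:asymptotic_G_right}–\eqref{eq:asymptotic_G_left} to obtain a coercive lower bound of the form $\Psi(\id;T,\xi(T,u)) \geq c(u_{1}^{2}+u_{2}^{2})$ uniformly in $T$; the hypothesis that $\tilde v$ is dominated by a power of ${\bf h}$ — together with the bound ${\bf h}\leq C(1+|u|)^{m}$ that follows from the change of variables — is then absorbed into a Gaussian majorant.

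The hardest step is the global coercive bound on $\Psi$ in the scaled variables in the far region. The Taylor expansion \eqref{eq:taylor_Psi} is only local, so the far estimate must be extracted directly from \eqref{eq:represent_psi}: one monitors separately the regime ${\bf h}\to\infty$ (where $G({\bf h})\sim 4{\bf h}$ forces $\Psi\to\infty$ even when $\xi_{1}$ stays bounded) and the regime where $\sqrt{\xi_{1}}+1/\sqrt{\xi_{1}}-2$ is large (which controls the $u_{1}$-direction), patching them via the explicit form of $G$. Once this coercivity is in place, dominated convergence together with the pointwise limit above concludes the proof.
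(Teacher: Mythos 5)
Your rescaling strategy is a genuinely different route from the paper's: the paper changes variables to $(\chi,\eta)=\big(\sqrt{\xi_1},{\bf h}(\id;T,\xi)\big)$, after which the exponent reads $\frac{2}{T}\big[\eta(\chi+\chi^{-1}-2)+G(\eta)\big]$ with a $T$-independent phase having a strict minimum at $(1,1)$, and the conclusion follows from a Laplace-type splitting (Gaussian main term on a fixed box, a vanishing correction coming from the Taylor remainder, and an off-box contribution killed by the bound ``phase $\geq c>0$''). Your pointwise computation under the Gaussian scaling $\xi_1=1+\sqrt{T}u_1$, $\xi_2=T+T^{3/2}u_2$ is correct, including the limit $\Psi\to u_1^2+3(2u_2-u_1)^2$ and the normalization $\frac{\sqrt{12}}{2\pi}\int_{\R^2}e^{-\frac{1}{2}[u_1^2+3(2u_2-u_1)^2]}\,\dd u=1$.

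However, the step you yourself single out as the hardest contains a genuine gap: the uniform coercive bound $\Psi\big(\id;T,\xi(T,u)\big)\geq c\,(u_1^2+u_2^2)$ for $|u|\geq R$, uniformly in $T$, is false. Take $u_1=0$ and $u_2=K/\sqrt{T}$ with $K$ large; then $\xi_1=1$, so the first bracket in \eqref{eq:rep_psi} vanishes, ${\bf h}=\frac{1}{1+K}$, and $\Psi=\frac{4}{T}\,G\big(\frac{1}{1+K}\big)\sim\frac{4\log^2(1+K)}{T}$ by \eqref{eq:asymptotic_G_left}, whereas $c\,u_2^2=\frac{cK^2}{T}$: for every fixed $c>0$ the inequality fails once $K$ is large, at admissible points with arbitrarily small $T$. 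The obstruction is structural: in the regime ${\bf h}\to0^+$ (i.e.\ $\xi_2\gg T\sqrt{\xi_1}$) the cost grows only like $\frac{1}{T}\log^2\big(1+\sqrt{T}u_2\big)$, which is sub-quadratic in $u_2$ at fixed $T$, so no Gaussian majorant uniform in $T$ exists in that direction; a correct domination must use a weaker envelope of the type $e^{-c_{T_0}\log^2(1+|u|)}$ (still integrable), or else the region $\sqrt{T}|u|\gtrsim 1$ must be estimated directly and shown to contribute $O(e^{-c/T})$ up to harmless factors. A related boundary issue (the prefactor $(1+\sqrt{T}u_1)^{-2}$ blowing up as $\xi_1\to0^+$, to be absorbed by $G({\bf h})\gtrsim\log^2\xi_1$ there) also needs an explicit argument. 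These defects look reparable, but as written the dominated-convergence step does not go through; the paper's $(\chi,\eta)$ change of variables is precisely what removes the need for any uniform-in-$T$ coercivity in rescaled coordinates.
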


%

In order to carry on with the convergence analysis it is essential to provide bounds for the integrals in \eqref{eq:param_series} and \eqref{eq:ansaz_p}. In light of estimates \eqref{eq:def_h}-\eqref{eq:est_u}, such bounds are consequences of the following

\begin{keyestimate}\label{prop:estimate_CK}
Let ${\bf h}$ be as defined in \eqref{eq:def_h} and $\tilde g$ be the function
\begin{equation}
\tilde g(h) = \sqrt{h} + h.
\end{equation}
For any $\tau>0$, there exists a constant $C_{\tau}>0$ such that 
\begin{align}\label{eq:key_est_1}
\int_{x\prec\xi\prec (1,0)} \sqrt{{\bf h}(z;s,\xi) } H_1(z;s,\xi) \sqrt{ {\bf h}(s,\xi;\id)}  H_1(s,\x;\id)  \dd \xi & \leq C_{\tau}\, \sqrt{{\bf h}(z; \id)} H_1(z,\id), \qquad s\in]t,0[,\qquad\qquad \\ \label{eq:key_est_2}
\int_{x\prec\xi\prec (1,0)} \tilde{g}\big({{\bf h}(z;s,\xi) }\big) H_1(z;s,\xi)\tilde g\big({ {\bf h}(s,\xi;\id)}\big)  H_1(s,\x;\id)  \dd \xi & \leq C_{\tau}\,\tilde g\big({{\bf h}(z;\id)}\big) H_1(z,\id), \qquad s\in]t,0[,\qquad
\end{align}
for any $z=(t,x)
\in \R\times D$ such that 
$z\prec \id$ and $t>-\tau$.
\end{keyestimate}

As already pointed out in the introduction, at the current stage we were not able to provide a proof for the bounds in Key Inequalities \ref{prop:estimate_CK}. In Section \ref{prop:estimate_CK} we collect a considerable amount of numerical evidence in favor of the claim that these estimates hold true. This makes us comfortable in conjecturing their validity. However, given that a rigorous proof is currently missing, the Key Inequalities \ref{prop:estimate_CK} are part of the hypotheses of Theorems \ref{th:main} and \ref{th:lower}.

%
\begin{theorem}\label{th:convergence}
Assume that Key Inequalities \ref{prop:estimate_CK} hold true. Then the series in \eqref{eq:parametrix_series} converges, and for any $\tau>0$ there exists a positive constant $C>0$, only dependent on $\tau$, such that 
\begin{equation}\label{eq:bound_conv}
\Big| \int_{z \prec \zeta\prec w}
 H(z;\zeta)  \Phi(\zeta;w)  \dd \zeta \Big|  \leq   C (T-t) \big( \sqrt{{\bf h}(z;w)} +{\bf h}(z;w) \big) H_1(z,w),
\end{equation}
for any $z=(t,x),\zeta=(T,w)\in \R\times D$ with $z\prec w$ and $T-t<\tau$.

Furthermore, the function $p$ given by \eqref{eq:ansaz_p} is the fundamental solution of $\Lc$ with $\sigma=1$.

\end{theorem}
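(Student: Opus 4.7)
The plan is a Yosida-Levi Picard iteration using the parametrix $H = {\bf u}\, H_1$. By the left-invariance of $\Lc$ and Remark \ref{rem:invariance}, it suffices to treat $w = \id$; the result for general $w$ then follows from the group relation \eqref{eq:fund_sol_inv}. The two pointwise inputs I need are $|\Lc H(z;w)| \leq \kappa \sqrt{{\bf h}(z;w)}\, H_1(z;w)$, which follows from \eqref{eq:LH_quat}--\eqref{eq:fg_bis} together with the transport identity \eqref{eq:transport} satisfied by ${\bf u}$ (Proposition \ref{th:u}) and the estimate \eqref{eq:def_h}, and $|H(z;w)| \leq \kappa\, \tilde g({\bf h}(z;w))\, H_1(z;w)$ with $\tilde g(h) := \sqrt{h}+h$, from \eqref{eq:est_u}.

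For the series \eqref{eq:parametrix_series} I will prove by induction on $n$ that
\begin{equation*}
|K_n(z;\id)| \leq \kappa^{n+1} C_\tau^n\, \frac{(-t)^n}{n!}\, \sqrt{{\bf h}(z;\id)}\, H_1(z;\id),\qquad -\tau < t < 0.
\end{equation*}
The base case $n=0$ is the bound on $\Lc H$. For the inductive step, I insert the hypothesis into \eqref{eq:param_series} and apply Key Inequality \eqref{eq:key_est_1} to the $\xi$-integral at each fixed $s$, collapsing the product of $\sqrt{{\bf h}}\, H_1$ factors into $C_\tau \sqrt{{\bf h}(z;\id)}\, H_1(z;\id)$; the residual integral $\int_t^0 (-s)^{n}/n!\,\dd s$ equals $(-t)^{n+1}/(n+1)!$. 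Summing gives $|\Phi(z;\id)| \leq \kappa e^{\kappa C_\tau \tau} \sqrt{{\bf h}(z;\id)}\, H_1(z;\id)$, and Picard iteration shows $\Phi$ solves \eqref{eq:picard_PHI}. The bound \eqref{eq:bound_conv} then follows by majorising $|H(z;\zeta)| \leq \kappa \tilde g({\bf h}(z;\zeta)) H_1(z;\zeta)$ and $|\Phi(\zeta;\id)| \leq C\sqrt{{\bf h}(\zeta;\id)}H_1(\zeta;\id) \leq C\tilde g({\bf h}(\zeta;\id)) H_1(\zeta;\id)$ inside $\int H(z;\zeta)\Phi(\zeta;\id)\,\dd\zeta$, invoking Key Inequality \eqref{eq:key_est_2} at each fixed $s$, and integrating in $s$ over $(t,0)$ to produce the $(T-t)$ prefactor.

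To show that $p$ in \eqref{eq:ansaz_p} is the fundamental solution, I verify items i) and ii) of Definition \ref{def:fund_sol}. For item ii), $H = {\bf u} H_1$ enjoys the Dirac-delta property because, by \eqref{eq:rapp_u1} and \eqref{eq:rapp_u2}, ${\bf u}(\id; T,\xi) = v(4 g^{-1}(1/{\bf h}(\id;T,\xi)))$ with $v(0)=1$, so Proposition \ref{prop:delta_Dirac} applies to $\tilde v := v \circ \bigl(4 g^{-1}(1/\cdot)\bigr)$ (continuous, polynomially bounded by \eqref{eq:est_u}, with $\tilde v(1)=1$); the contribution of $\int H \Phi\,\dd\zeta$ vanishes as $t\to T^-$ because, by \eqref{eq:bound_conv}, its pairing with any $\varphi \in C_b(D)$ is at most $C(T-t)\int \tilde g({\bf h}(z;w)) H_1(z;w) |\varphi(y)|\,\dd y$, uniformly bounded as $t\to T^-$ (again via Proposition \ref{prop:delta_Dirac} applied to $\tilde v := \tilde g/\tilde g(1)$). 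For item i), I differentiate \eqref{eq:ansaz_p} under the integral sign: the spatial derivatives pass through, while $\partial_t$ of the time-integral produces a boundary term $-\lim_{s\to t^+}\int H(z;s,\xi)\Phi(s,\xi;w)\,\dd\xi = -\Phi(z;w)$ via the delta property of $H$. Combining with the fixed-point identity \eqref{eq:picard_PHI} for $\Phi$ yields
\begin{equation*}
\Lc p(z;w) = \Lc H(z;w) - \Phi(z;w) + \int_{z\prec\zeta\prec w} \Lc H(z;\zeta)\, \Phi(\zeta;w)\, \dd\zeta = 0.
\end{equation*}

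The main obstacle is technical rather than conceptual: rigorously justifying differentiation under the integral and the boundary computation in item i) requires locally uniform integrability of $\Lc_z H(z;\zeta)\Phi(\zeta;w)$ near the diagonal $\zeta = z$, where the $\sqrt{{\bf h}(z;\zeta)}$ factor in $\Lc H$ has to absorb the singularity of $H_1$, together with a quantitative version of the delta property that makes the limit $s\to t^+$ uniform in a neighbourhood. Both ingredients are produced by the uniform estimates derived from the Picard scheme, combined with the asymptotic Gaussian behaviour of $H_1$ near the pole encoded in \eqref{eq:taylor_Psi}.
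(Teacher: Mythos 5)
Your treatment of the first part of the statement --- the inductive bound on $K_n$ via Key Inequality \eqref{eq:key_est_1}, the exponential summation yielding the bound on $\Phi$, and the derivation of \eqref{eq:bound_conv} from \eqref{eq:est_u} and \eqref{eq:key_est_2} --- follows the paper's proof essentially verbatim, and the same is true of your verification of the Dirac-delta property ii), which the paper also obtains from Proposition \ref{prop:delta_Dirac} (applied through the invariance \eqref{eq:simmetry1}--\eqref{eq:Psi_invariance}) together with the $(T-t)$ factor in \eqref{eq:bound_conv}.

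The gap is in item i). You propose to verify $\Lc p=0$ classically, by differentiating \eqref{eq:ansaz_p} under the integral sign and extracting the boundary term $-\Phi(z;w)$, and you dismiss the justification as merely technical. It is not: the Picard scheme only yields sup-type bounds of the form $|\Phi(\zeta;w)|\leq C\sqrt{{\bf h}(\zeta;w)}\,H_1(\zeta;w)$, and these are insufficient to pass $\partial_{x_1x_1}$ inside $\int_{z\prec\zeta\prec w}H(z;\zeta)\Phi(\zeta;w)\,\dd\zeta$: near the diagonal $\partial_{x_1x_1}H(z;s,\xi)$ carries an extra singular factor of order $(s-t)^{-1}$ relative to $H_1(z;s,\xi)$, so after the $\xi$-integration the $s$-integrand is of order $(s-t)^{-1}$, which is not integrable. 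In the classical Levi--Friedman framework this is repaired only by proving spatial H\"older continuity of $\Phi$ and exploiting the cancellation $\Phi(s,\xi;w)-\Phi(s,x;w)$, a substantial piece of work that your uniform estimates do not provide and that the paper never carries out, because it does not need to. The paper's route is different: since $\Lc$ is hypoelliptic, it suffices to prove \eqref{eq:pde_distrib_bis}, i.e. that $p$ solves the equation in the sense of distributions. This is achieved through the integration-by-parts identity \eqref{eq:key_equality}, which uses only the boundary vanishing \eqref{eq:new_bis} of $H$ as $x_2\to\xi_2^-$ and the delta property \eqref{eq:delta_H_bis}, followed by Fubini's theorem and the fixed-point equation \eqref{eq:picard_PHI}; no derivative ever hits the correction term. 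To close your argument you must either adopt this distributional/hypoellipticity strategy, or supply the missing H\"older regularity of $\Phi$ and a genuine proof that second-order derivatives can be taken under the integral.
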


Before proving Theorem \ref{th:convergence}, we prove Theorems \ref{th:main} and \ref{th:lower}, which are straightforward consequences of Theorem \ref{th:convergence}.

\begin{proof}[Proof of Theorem \ref{th:main}]
In light of Remark \ref{rem:invariance}, it is not restrictive to assume $\sigma=1$. Thus the bound \eqref{eq:main_estimate} stems from Theorem \ref{th:convergence}, in particular by applying the estimates \eqref{eq:est_u}-\eqref{eq:bound_conv} to \eqref{eq:ansaz_p}.
\end{proof}

\begin{proof}[Proof of Theorem \ref{th:lower}]

In light of Remark \ref{rem:invariance}, it is not restrictive to assume $\sigma=1$ and $w=\id$. To ease notation we remove the explicit dependence on $\id$ in the functions below. 

All the following inequalities are meant for any $z=(t,x)\in\, ]-1,0[\,\times D$ such that 
\begin{equation}\label{eq:compact}
\frac{1}{\kappa}\leq \frac{\sqrt{x_1}}{-x_2}\leq \kappa. 
\end{equation}
By Theorem \ref{th:convergence} we obtain

\begin{equation}\label{eq:diseqp}
 p(t,x)   \geq H(t,x) - \bigg| \int_{(t,x) \prec \zeta\prec w}  H(t,x;\zeta)  \Phi(\zeta)  \dd \zeta \bigg| 
 \geq  H(t,x) -  C (-t) \big(\sqrt{{\bf h}}+{\bf h}\big)(t,x)   H_1(t,x), 
\end{equation}
where $C$ is a universal constant independent of any variable. Furthermore, by \eqref{eq:compact}, we have 
\begin{equation}
{\bf h}(t,x) \leq 1,\qquad t\in [-{\kappa}^{-1},0[,
\end{equation}
and thus \eqref{eq:diseqp} together with the first equality in \eqref{eq:est_u} yield
\begin{align}
p(t,x)   & \geq \Big( {\bf h}(t,x)   -  C (-t) \big(\sqrt{{\bf h}}+{\bf h}\big)(t,x)   \Big) H_1(t,x)\\
& = \bigg( 1-C(-t) -  C \sqrt{-t}\Big({\frac{-x_2}{\sqrt{x_1}}}\Big)^{1/2}   \bigg) {\bf h}(t,x) H_1(t,x)
\intertext{(by the first inequality in \eqref{eq:compact})}
& \geq \big( 1-C(-t) -  C \sqrt{-t}\sqrt{\kappa} \,  \big) {\bf h}(t,x) H_1(t,x)
\end{align}
for any $t\in [-{\kappa}^{-1},0[$. This completes the proof. 
\end{proof}

We conclude the section with the proof of Theorem \ref{th:convergence}.
\begin{proof}[Proof of Theorem \ref{th:convergence}]
We first prove convergence of the series in \eqref{eq:parametrix_series} and estimate \eqref{eq:bound_conv}. In light of \eqref{eq:simmetry1}-\eqref{eq:Psi_invariance}, it is not restrictive to assume $w=\id$. To ease notation we remove the explicit dependence on $\id$ in the functions below. Furthermore, we will denote by $C_1, C_2, \dots$ any positive constant that depends at most on $\tau$. 

By Proposition \ref{th:u}, in particular by applying \eqref{eq:transport} to \eqref{eq:LH}, we obtain 
\begin{equation}\label{eq:LH_ter}
\Lc H(z,\zeta) = \frac{ x_1^2}{2}  \partial_{x_1 x_1} {\bf u}(z,\zeta) H_1(z,\zeta), \qquad z\prec \zeta.
\end{equation}
Therefore, by \eqref{eq:param_series} and employing \eqref{eq:def_h}, we obtain 
\begin{equation}
|K_0(z)| \leq  C_1 \sqrt{{\bf h}(z)} H_1(z).
\end{equation}
In general, by induction we can also prove
\begin{equation}\label{eq:estim_Kn}
|K_n(z)| \leq  C_2^{n} C_1^{n+1} \frac{(-t)^n}{n!} \sqrt{{\bf h}(z)} H_1(z)
\end{equation}
for any $n\in \mathbb{N}_0$. Indeed, assuming \eqref{eq:estim_Kn} true, 
we have
\begin{align}
|K_{n+1}(z)| & \leq \frac{C_2^{n} C_1^{n+1}}{n!}   \int_{z \prec \zeta\prec \id} (-s)^n \big| \Lc H(z,\zeta) \big|   \sqrt{{\bf h}(\zeta)} H_1(\zeta)   \dd \zeta
\intertext{(by  \eqref{eq:LH_ter}-\eqref{eq:def_h})}
 & \leq \frac{C_2^{n} C_1^{n+2}}{n!} \int_{z \prec \zeta\prec \id} (-s)^n \sqrt{{\bf h}(z;\zeta)}  H_1(z;\zeta)   \sqrt{{\bf h}(\zeta)} H_1(\zeta)   \dd \zeta
 \intertext{(by \eqref{eq:key_est_1})}
 & \leq \frac{C_2^{n+1} C_1^{n+2}}{n!} \int_t^0  (-s)^n \dd s \, \sqrt{{\bf h}(z)} H_1(z)\\
 &\leq C_2^{n+1} C_1^{n+2} \frac{(-t)^{n+1}}{(n+1)!} \sqrt{{\bf h}(z)} H_1(z).
\end{align}
Summing over $n$, we obtain that the series in \eqref{eq:parametrix_series} converges and that
\begin{equation}
|\Phi(z) | \leq  C_1\, e^{- C_1 C_2 t} 
\sqrt{{\bf h}(z)} H_1(z).
\end{equation}
This, together with \eqref{eq:est_u}-\eqref{eq:key_est_2}, yields \eqref{eq:bound_conv}.

\vspace{5pt}

We now go on to prove the second part of the statement, namely that $p$ as defined by \eqref{eq:ansaz_p} is the fundamental solution of $\Lc$ with $\sigma=1$.  

For any $x'\in D$ and $t<s$, and for any $\varphi\in C_b(D)$, we have
\begin{align}
\int_{x'\prec y} H(t,x';T,y) \varphi(y) \dd y& = \int_{x'\prec y} {\bf u}(t,x';T,y )  H_1(t,x';T,y) \varphi(y) \dd y  
\intertext{(by \eqref{eq:simmetry1}-\eqref{eq:Psi_invariance})}
& = \int_{x'\prec y} {\bf u}\big(\id; (t,x')^{-1}\circ(T,y) \big) (x'_1)^{-2}  H_1\big(\id; (t,x')^{-1}\circ(T,y) \big) \varphi(y) \dd y \\
& = \int_{(1,0)\prec \xi} {\bf u}(\id; T-t, \xi)  H_1(\id;  T-t, \xi) \varphi_{x'}( \xi) \dd \xi ,
\end{align}
where we used the notation
\begin{equation}
\varphi_{x}(\xi) := \big( \xi_1 x_1^{-1}, x_2 +  \xi_2 x_1^{-1}  \big).
\end{equation}
Therefore, employing Proposition \ref{prop:delta_Dirac}, it is straightforward to show that 
\begin{equation}\label{eq:delta_H}
\lim_{\substack{(t,x')\to z\\ t<T} } \int_{x'\prec y} H(t,x';T,y) \varphi(y) \dd y = \varphi(x)
\end{equation}
for any $z=(T,x)\in\R\times D$ and $\varphi\in C_b(D)$. In analogous way, we can employ estimate \eqref{eq:bound_conv} together with Theorem \ref{prop:delta_Dirac}, to prove
\begin{equation}
\lim_{\substack{(t,x')\to z\\ t<T} } \int_{x'\prec y} \bigg(    \int_{(t,x') \prec \zeta\prec (T,y)}
 H(t,x';\zeta)  \Phi(\zeta;T,y)  \dd \zeta  \bigg) \varphi(y) \dd y = 0
\end{equation}
for any $z=(T,x)\in\R\times D$ and $\varphi\in C_b(D)$. This together with \eqref{eq:delta_H} imply that $p$ satisfies property ii) in Definition \ref{def:fund_sol}.

We now prove property i) in Definition \ref{def:fund_sol}, namely that $p(\cdot,w)$ as defined by \eqref{eq:ansaz_p} satisfies \eqref{eq:Lsolved}. By \eqref{eq:simmetry1}-\eqref{eq:Psi_invariance} it is easy to show that
\begin{equation}
y_1^2 p(z;w) = 
 p(w^{-1}\circ z;\id). 
\end{equation}
In light of this, and of the left invariance of $\Lc$ (see \eqref{eq:left_invariance}), it is not restrictive to set $w=\id$. Once more, to ease notation we remove the explicit dependence on $\id$ in the functions below. We need to show that
\begin{equation}\label{eq:pde_distrib}
\Lc p(z)=0,\qquad z=(t,x)\in \Omega,
\end{equation}
with 
$\Omega: = \R^- \times \R^+ \times \R^-$. Since the operator $\Lc$ is hypoelliptic, it is enough to show that $p$ solves \eqref{eq:pde_distrib} in the sense of distributions, namely
\begin{equation}\label{eq:pde_distrib_bis}
\int_{\Omega} p(z) \tilde\Lc \phi(z) \dd z = 0, \qquad \phi\in C_{0}^{\infty}(\Omega),
\end{equation}
where the operator $\tilde\Lc$ denotes the formal adjoint of $\Lc$, i.e.
\begin{equation}
\tilde\Lc \phi(z) = -\partial_t \phi(z)-  x_1 \partial_{x_2} \phi(z)  + \frac{1}{2} \partial_{x_1 x_1} \big(  x_1^2  \phi(z)  \big).
\end{equation}

We first prove that, for any $\zeta=(s,\xi) \in \Omega$, we have
\begin{equation}\label{eq:key_equality}
\int_{z\prec \zeta }   H(z;\zeta) \tilde\Lc \phi (z)  \dd z = \int_{z\prec \zeta }  \phi (z)  \Lc H(z;\zeta)  \dd z  - \phi(\zeta)  ,\qquad \phi\in C_0^{\infty}(\Omega).
\end{equation}
Note that, proceeding like we did above to prove \eqref{eq:delta_H} (we skip the details for brevity), we obtain 
\begin{equation}\label{eq:delta_H_bis}
\lim_{t\to s^- } \int_{x\prec \eta} H(t,x;\zeta) \varphi(x) \dd x = \varphi(\eta),\qquad \varphi\in C_b(D).
\end{equation}
Furthermore, since ${\bf h}(t,x_1,x_2;\zeta)\to +\infty$ as $x_2\to \xi_2^-$, \eqref{eq:rep_psi}-\eqref{eq:asymptotic_G_right} together with \eqref{eq:est_u} simply yield 
\begin{equation}\label{eq:new_bis}
H(t,x_1,x_2;\zeta)\to 0 \quad \text{as } x_2\to \xi_2^-, \qquad t<s,\, x_1\in \R^+.
\end{equation}
For any $\delta>0$ we obtain
\begin{align}
 \int_{-\infty}^{s-\delta}  \int_{x\prec \xi}  
 H(t,x;s,\xi) \tilde\Lc \phi (t,x)& \dd x  \dd t        
 \intertext{(integrating by parts w.r.t. $x$ and using the boundary condition \eqref{eq:new_bis})}
& =   \int_{-\infty}^{s-\delta}  \int_{x\prec \xi}  
 \Lc H(t,x;s,\xi) \phi (t,x) \dd x  \dd t +   \int_{x\prec \xi}   H(s-\delta,x;s,\xi) \phi (s-\delta,x) \dd x .
\end{align}
Passing to the limit as $\delta\to 0^+$, together with \eqref{eq:delta_H_bis}, yields \eqref{eq:key_equality}.

We now go on to prove \eqref{eq:pde_distrib_bis}. Integrating by parts, we clearly obtain
\begin{equation}\label{eq:testH}
\int_{\Omega}  H(z;\id) \tilde\Lc \phi (z)     \dd z = \int_{\Omega}   \phi (z)\Lc H(z;\id)     \dd z, \qquad \phi\in C_{0}^{\infty}(\Omega).
\end{equation}
Furthermore,
\begin{align}
&\int_{\Omega}  \bigg(  \int_{z\prec \zeta \prec \id}  H(z;\zeta) \Phi(\zeta;\id) \dd \zeta \bigg) \tilde\Lc \phi (z) \dd  z   
\intertext{(by Fubini's Theorem)}
& =  \int_{\Omega}   \bigg(  \int_{z\prec \zeta }   H(z;\zeta) \tilde\Lc \phi (z)  \dd z \bigg) \Phi(\zeta;\id) \dd  \zeta   
\intertext{(
by \eqref{eq:key_equality})}
& =  \int_{\Omega}   \bigg(  \int_{z\prec \zeta }  \phi (z)  \Lc H(z;\zeta)  \dd z - \phi(\zeta) \bigg) \Phi(\zeta;\id) \dd  \zeta  
\intertext{(again by Fubini's Theorem)}
& =\int_{\Omega}   \bigg(  \int_{z\prec \zeta \prec \id }    \Lc H(z;\zeta)\, \Phi(\zeta;\id)  \dd \zeta  \bigg)  \phi (z) \dd  z - \int_{\Omega}  \phi(\zeta)  \Phi(\zeta;\id) \dd  \zeta
\intertext{(by construction $\Phi$ solves \eqref{eq:picard_PHI})}
& = - \int_{\Omega}    \phi (z) \Lc H(z;\id)   \dd z   
\end{align}
for any $\phi\in C_{0}^{\infty}(\Omega)$. 

This, together with \eqref{eq:testH} and \eqref{eq:ansaz_p}, proves \eqref{eq:pde_distrib_bis} and completes the proof.


\end{proof}




\subsection{Proof of Proposition \ref{th:u}}

We start with the following
\begin{lemma}\label{lemm:lim_integrand_u}
For any $z=(t,x),w=(T,y)\in\R\times D$ with $z\prec \zeta$, denoting by $\gamma=\gamma(s)=\big(\gamma_1(s),\gamma_2(s)\big)$ the optimal trajectory of the control problem \eqref{eq:control1}-\eqref{eq:optimal_curves}, we have
\begin{equation}
\gamma_{1}^2(s)  \partial_{\gamma_1 \gamma_1}\Psi\big(s,\gamma(s);T,y\big) =\frac{ h\Big(  \frac{(T-s)^2}{(T-t)^2}\, g^{-1}\big(\frac{1}{{\bf h}(z;w)} \big)  \Big)}{T-s}, \qquad s\in [t,T[,
\end{equation}
with
\begin{equation}\label{eq:w}
h(\eta) =
\begin{cases}
     2   \sqrt{\eta} \coth ( \sqrt{\eta}) - \frac{2\eta}{1-\sqrt{\eta}  \coth( \sqrt{\eta})}   , &\quad \eta\in\, ]-\pi^2,+\infty[\, \setminus \{ 0 \}   \\ 
   8, &\quad z=0 
\end{cases}.
\end{equation}

\end{lemma}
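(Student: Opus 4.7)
The plan is to split the proof into two independent stages: (I) a purely algebraic identity expressing $x_1^2\partial_{x_1 x_1}\Psi(t,x;T,y)$ in terms of $r := g^{-1}(1/{\bf h}(t,x;T,y))$, valid at an arbitrary base point $(t,x)\prec(T,y)$; and (II) a conservation law along the optimal trajectory showing that $r$ is rescaled by the factor $(T-s)^2/(T-t)^2$ when the base point is moved from $(t,x)$ to $(s,\gamma(s))$. Combining (II) with (I) at the point $(s,\gamma(s))$ then yields the claimed formula.

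For Stage (I), I would introduce the abbreviations $S := \sinh\sqrt{r}/\sqrt{r} = 1/{\bf h}$, $C := \cosh\sqrt{r}$, $A := \sqrt{x_1/y_1}$, and rewrite \eqref{eq:represent_psi} in the compact form
\[
\Psi(t,x;T,y) \;=\; \frac{4}{T-t}\Big[\,r + \tfrac{A + 1/A - 2C}{S}\,\Big],
\]
using $r+{\bf h}^2 = (C/S)^2$ together with the sign analysis \eqref{eq:sgn_expl} to identify $\sign\sqrt{r+{\bf h}^2}$ with $C/S$ on the whole range $r\in(-\pi^2,+\infty)$. Denoting by $D := x_1\partial_{x_1}$ the Euler operator, one observes that $D{\bf h} = {\bf h}/2$, $DA = A/2$, and, by implicit differentiation through $g(r) = 1/{\bf h}$ combined with $g'(r) = (C-S)/(2r)$,
\[
Dr = -\tfrac{rS}{C-S}, \qquad DS = -S/2, \qquad DC = -\tfrac{rS^2}{2(C-S)}.
\]
Substituting into $D\Psi$, the $Dr$ contribution cancels against the $-2\,DC/S$ term and leaves $D\Psi = 4(A-C)/[(T-t)S]$. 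A second application of $D$, simplified by means of the Pythagorean identity $rS^2 = C^2 - 1$, yields
\[
D^2\Psi - D\Psi \;=\; \frac{2}{T-t}\Big[\tfrac{C}{S} + \tfrac{rS}{C-S}\Big],
\]
which equals $h(r)/(T-t)$ in view of $\sqrt{r}\coth\sqrt{r} = C/S$ and definition \eqref{eq:w}. Since $x_1^2\partial_{x_1 x_1}\Psi = D^2\Psi - D\Psi$, Stage (I) is complete.

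For Stage (II), I would invoke the dynamic programming principle: the running cost and the dynamics in \eqref{eq:control1}--\eqref{eq:optimal_curves} are time-autonomous, hence the restriction to $[s,T]$ of the optimal trajectory starting at $(t,x)$ must itself be optimal for the control problem with initial datum $(s,\gamma(s))$. In particular the defining parameter $E$ in \eqref{def:E} is conserved along $\gamma$, i.e.\ $E_{s,\gamma(s);T,y} = E_{z,w}$, which, by the very definition of $E$, is equivalent to
\[
g^{-1}\Big(\tfrac{1}{{\bf h}(s,\gamma(s);T,y)}\Big) \;=\; \tfrac{(T-s)^2}{(T-t)^2}\, g^{-1}\Big(\tfrac{1}{{\bf h}(z;w)}\Big).
\]
Applying Stage (I) at the point $(s,\gamma(s))$ and inserting this identity gives the lemma.

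The main obstacle will be the algebraic simplification in Stage (I): one must carefully juggle implicit differentiation through $g$ with the sign bookkeeping in \eqref{eq:sgn_expl} around the degenerate value $r = -\pi^2/4$ in order to land on the compact form of $\Psi$ and to telescope the derivatives. Stage (II) is conceptually immediate from the autonomy of the control problem, but for a fully rigorous argument one should verify the conservation $E_{s,\gamma(s);T,y} = E_{z,w}$ directly from the explicit parametrisation of the optimal trajectory obtained in \cite{cibelli2019sharp}, rather than relying on a general Hamiltonian argument.
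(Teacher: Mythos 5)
Your proposal is correct and follows essentially the same route as the paper: the rescaling identity for $g^{-1}\big(1/{\bf h}\big)$ along the optimal curve is exactly the paper's computation \eqref{eq:constant_energy} based on the explicit parametrisation \eqref{eq:optimal_curve_expl} from \cite{cibelli2019sharp} (your fallback to that explicit verification is indeed the right call, since conservation of $E_{z,w}$ does not follow from the optimality of the restricted trajectory alone), and your Stage (I) is the ``direct computation of $\partial_{x_1x_1}\Psi$'' that the paper leaves implicit, carried out correctly — the identities $\sign\sqrt{r+{\bf h}^2}=C/S$, $D\Psi=4(A-C)/[(T-t)S]$ and $D^2\Psi-D\Psi=\frac{2}{T-t}\big[\tfrac{C}{S}+\tfrac{rS}{C-S}\big]=h(r)/(T-t)$ all check out, including the removable singularity at $r=0$ giving $h(0)=8$.
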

\begin{proof} 

In \cite{cibelli2019sharp} it was shown that the optimal curve $\gamma$ is given by
\begin{equation}\label{eq:optimal_curve_expl}
\gamma
(s) =\begin{cases}
   \Big(   \frac{ 4 y_1 }{(   k(T-s) +2 )^2 }   ,  y_2 - \frac{  2(T-s)  y_1  }{  k(T-s) +2  }   \Big),     &  \text{if }   E=0       \vspace{5pt}\\   
   \bigg(   \frac{ E y_1 }{\big(    \sqrt{E} \cosh \big( \frac{ T-s }{ 2 } \sqrt{E}  \big) + k \sinh \big(  \frac{ T-s }{ 2 } \sqrt{E} \big)   \big)^2 }   ,  
   y_2 - \frac{  2  \sinh\big(   \frac{ T-s }{ 2 } \sqrt{E}  \big)  y_1  }{  \sqrt{E} \cosh \big( \frac{ T-s }{ 2 } \sqrt{E}  \big) + k \sinh \big(  \frac{ T-s }{ 2 } \sqrt{E}\big)  }   \bigg),     &  \text{if }   E>0    \vspace{5pt}  \\ 
  \bigg(   \frac{ - E y_1 }{\big(    \sqrt{-E} \cos \big( \frac{ T-s }{ 2 } \sqrt{-E}  \big) + k \sin \big(  \frac{ T-s }{ 2 }\sqrt{-E} \big)   \big)^2 }   ,  
   y_2 - \frac{  2  \sin\big(   \frac{ T-s }{ 2 } \sqrt{-E}  \big)  y_1  }{  \sqrt{-E} \cos \big( \frac{ T-s }{ 2 } \sqrt{-E}  \big) + k \sin \big(  \frac{ T-s }{ 2 } \sqrt{-E} \big)  }   \bigg),     &  \text{if }   E<0      
\end{cases}
\end{equation}
where $E=E_{z,w}$ is as defined in \eqref{def:E}-\eqref{def:g}, and where
\begin{equation}
k = k_{z,w} := \begin{cases}
        \frac{ 2 y_1 }{  y_2 - x_2  } - \sqrt{E + \frac{ 4 x_1 y_1 }{ (y_2 - x_2)^2  } } ,   &  \text{if }  E\geq  -\frac{ \pi^2 }{( T-t)^2 }               \\  
        -  \frac{ 2 y_1 }{  y_2 - x_2  } - \sqrt{E + \frac{ 4 x_1 y_1 }{ (y_2 - x_2)^2  } } ,   &  \text{if }  -\frac{ 4\pi^2 }{ ( T-t)^2 }<  E <  -\frac{ \pi^2 }{ ( T-t)^2 }
\end{cases}.
\end{equation}
We also have
\begin{equation}
\frac{ y_2 - \gamma_{2}(s)}{(T-s)\sqrt{\gamma_{1}(s) y_1}} = \frac{ 2 \sinh\Big(  \frac{(T-s) \sqrt{E}}{2}  \Big) }{ (T-s) \sqrt{E}} ,
\end{equation}
which implies
\begin{equation}\label{eq:constant_energy}
g^{-1}\bigg( \frac{ y_2 - \gamma_{2}(s)}{(T-s)\sqrt{\gamma_{1}(s) y_1}} \bigg) = 
\frac{(T-s)^2 E}{4}.
\end{equation}

The statement now follows from a direct computation of $\partial_{x_1 x_1}\Psi(t,x;T,y)$, together with \eqref{eq:optimal_curve_expl} and \eqref{eq:constant_energy}.
\end{proof}
We are now in the position to prove Theorem \ref{th:u}.

\begin{proof}[Proof of Theorem \ref{th:u}]

Fix $z=(t,x), w=(T,y) \in \R\times D$ such that $z\prec w$, and denote as usual by $\gamma=\gamma(s)$ the optimal curve connecting $z$ to $w$.
By Lemma \ref{lemm:lim_integrand_u} together with \eqref{eq:fg_bis} we obtain 
\begin{equation}
f\big(s,\gamma(s);T,y\big) := \frac{2 - \frac{1}{4} h\Big(  \frac{(T-s)^2}{(T-t)^2}\, g^{-1}\big(\frac{1}{{\bf h}(z;w)} \big)  \Big)}{T-s}, \qquad s\in [t,T[.  
\end{equation}
A simple change of variables now yields
\begin{equation}\label{eq:int_change_tau}
\int_t^T f\big(t,\gamma(s);T,y\big) \dd s = \int_{0}^1 \tilde f (\tau)   \dd \tau ,
\end{equation}
with
\begin{equation}
 \tilde f (\tau) = \frac{2 - \frac{1}{4} h\Big(  \tau^2 g^{-1}\big(\frac{1}{{\bf h}(z;w)} \big)  \Big)}{\tau} 
\end{equation}
and $h$ as in \eqref{eq:w}.
By Taylor-expanding the function $h(z)$ around $z=0$ we obtain
\begin{equation}
2 - \frac{1}{4}h (z) = -\frac{16}{15} z+O\big(z^{3/2}\big)\qquad \text{as } z\to 0.
\end{equation}
Furthermore, it can be checked that 
\begin{equation}
1-\sqrt{\eta}  \coth( \sqrt{\eta}) \neq 0, \qquad \eta\in]-\pi^2,+\infty[\setminus \{ 0 \},
\end{equation}
and thus $\tilde f$ is continuous on $[0,1]$. 
Therefore, the integral in \eqref{eq:int_change_tau} is well defined, and so is the function ${\bf u}$ as given by \eqref{eq:u}. 

We now go on to prove that ${\bf u}$ can be represented as in \eqref{eq:rapp_u1}-\eqref{eq:rapp_u2}, and that it is smooth. Set
\begin{equation}
a:=4 g^{-1}\bigg(\frac{1}{{\bf h}(z;w)} \bigg).
\end{equation}
If ${\bf h}(z;w) =1$, then $a =0$ and \eqref{eq:rapp_u1}-\eqref{eq:rapp_u2} is trivially satisfied. If ${\bf h}(z;w) \neq 1$, then $a \neq 0$ and it is easy to check that the function 
\begin{equation}
\tilde F(\tau):= 2 \log \tau-\frac{1}{2} \log \Big(\sqrt{ a} \tau \sinh ( \sqrt{ a} \tau)-2\cosh( \sqrt{ a} \tau)+2\Big)
\end{equation}
is a primitive of $\tilde f$ on $]0,\infty[$. Thus we obtain
\begin{align}
\int_{0}^1 \tilde f (\tau)   \dd \tau &= \tilde F(1) - \lim_{\tau\to 0^+} \tilde F(\tau) = \\
& = -\frac{\log \big(\sqrt{a} \sinh(\sqrt{a})-2 \cosh(\sqrt{a})+2\big)}{2} + \frac{1}{2}\log \left(\frac{a^2}{12}\right).
\end{align}
This yields
\begin{equation}\label{eq:rep_u_proof}
{\bf u} (z;\zeta) = \frac{|a|}{2 \sqrt{3 \sqrt{a} \sinh\big(\sqrt{a}\big)-6 \cosh\big(\sqrt{a}\big)+6}}, \qquad a:=4 g^{-1}\bigg(\frac{1}{{\bf h}(z;w)} \bigg),
\end{equation}
which is \eqref{eq:rapp_u1}-\eqref{eq:rapp_u2}. Furthermore, by Taylor expanding the functions $\sinh$ and $\cosh$ around zero we obtain 
\begin{equation}
\frac{1}{v(\eta)} =\sqrt{ 1 + \sum_{n=1}^{\infty}  \frac{12}{(2n+3)!} \frac{n+1}{n+2} \eta^n  }, \qquad \eta\in ]-4\pi^2, + \infty [.
\end{equation}
This shows that $v$ is smooth on $]-4\pi^2,+\infty[$, and since $g^{-1}$ and ${\bf h}$ are smooth, then ${\bf u}$ is smooth on its existence domain. 

We now prove that ${\bf u}(\cdot;\zeta)$ solves \eqref{eq:transport}. We employ the method of the characteristics. Fix again $z=(t,x), w=(T,y) \in \R\times D$ such that $z\prec w$, and recall that $\gamma=\gamma(s)$ denotes the optimal curve connecting $z$ to $w$.
By \eqref{eq:u} we have
\begin{equation}
{\bf u}\big(   r, \gamma(r)  ; w  \big)  =  e^{\int_r^T  f\left( s, \gamma(s)  ; w \right)  \dd s},  \qquad r\in [t,T[.
\end{equation}
This yields
\begin{equation}\label{eq:uuu}
\frac{\dd }{\dd r} {\bf u}\big(   r, \gamma(r)  ; w  \big) = - f\big( r, \gamma(r)  ; w \big) {\bf u}\big(   r, \gamma(r)  ; w  \big), \qquad r\in [t,T[. 
\end{equation}
On the other hand the smoothness of ${\bf u}$, together with Lemma \ref{lem:radial_field}, implies
\begin{equation}\label{eq:uuuu}
\frac{\dd }{\dd r} {\bf u}\big(   r, \gamma(r)  ; w  \big) = \big\langle \big(1, \dot\gamma(r)\big)  , \nabla  {\bf u}\big(   r, \gamma(r)  ; w  \big) \big\rangle  =
\big(\partial_t + \gamma_1(r) \partial_{x_2} - g(\gamma(r);w) \partial_{x_1} \big)  {\bf u}\big(\gamma(r);w\big),  \qquad r\in [t,T[. 
\end{equation}
Setting $r=t$ in \eqref{eq:uuu} and \eqref{eq:uuuu} we obtain \eqref{eq:transport}.

We now prove the bounds in \eqref{eq:est_u}. 
Set 
\begin{equation}
r(\rho):= \frac{g^{-1}(\rho)}{\log^2 \rho}, \qquad \rho>>1, 
\end{equation}
and obtain 
\begin{equation}\label{eq:expr_u_new}
v\big( 4 g^{-1} ( \rho) \big) =  v\big( 4  r(\rho) \log^2 \rho \big) 
= \rho^{-\sqrt{r(\rho)}}( \log \rho)^{\frac{3}{2}} R\big(\rho, r(\rho) \big),
\end{equation}
with 
\begin{equation}
R(\rho,r)  := {2r}{\bigg({
3 \sqrt{r} \big(1-\rho^{-4\sqrt{r}}\big) - \frac{3}{\log \rho} \big(1+ \rho^{-4\sqrt{r}}\big) + 6 \frac{\rho^{-2 \sqrt{r}}}{\log \rho}
}\bigg)^{-\frac{1}{2}}} .
\end{equation}
By \eqref{eq:asymp_ginv}, which is
\begin{equation}
r(\rho) \longrightarrow 1\qquad \text{as } \rho\to +\infty,
\end{equation}
we obtain
\begin{equation} \label{eq:lim_R}
R\big(\rho, r(\rho)\big) \longrightarrow \frac{2}{\sqrt{3}}\qquad \text{as } \rho\to +\infty,
\end{equation}
and
\begin{equation}\label{eq:lim_zero}
\rho^{-\sqrt{r(\rho)}+\frac{1}{2}}( \log \rho)^{\frac{3}{2}} \longrightarrow 0\qquad \text{as } \rho\to +\infty.
\end{equation}
Furthermore the definition of $g$ yields 
\begin{equation}
-\sqrt{g^{-1}(\rho)}+\log \rho = - \log \bigg(     \sqrt{g^{-1}(\rho)}  +  \frac{e^{-\sqrt{g^{-1}(\rho)}}}{\rho}     \bigg),
\end{equation}
which in turn implies
\begin{equation}
\rho^{-\sqrt{r(\rho)}+1} = e^{-\sqrt{g^{-1}(\rho)}+\log \rho} = \frac{1}{\sqrt{g^{-1}(\rho)}  +  \frac{e^{-\sqrt{g^{-1}(\rho)}}}{\rho}}.
\end{equation}
Applying again \eqref{eq:asymp_ginv}, the latter yields 
\begin{equation}
\rho^{-\sqrt{r(\rho)}+1} \sim \frac{1}{\log \rho}\qquad \text{as } \rho\to +\infty
\end{equation}
and thus 
\begin{equation}\label{eq:lim_infty}
\rho^{-\sqrt{r(\rho)}+1}( \log \rho)^{\frac{3}{2}} \longrightarrow +\infty \qquad \text{as } \rho\to +\infty.
\end{equation}
Now, plugging \eqref{eq:lim_R}-\eqref{eq:lim_zero}-\eqref{eq:lim_infty} into \eqref{eq:expr_u_new} yields 
\begin{equation}\label{eq:asymp_v_big}
\rho^{-1} < v\big( 4 g^{-1} ( \rho) \big) < \rho^{-1/2},\qquad \rho>>1.
\end{equation}
We now prove 
\begin{equation}\label{eq:asymp_v}
 v\big( 4 g^{-1} ( \rho) \big) < \rho^{-1},\qquad \rho<<1,
\end{equation}
which is equivalent to 
\begin{equation}\label{eq:asymp_v_b}
g(\eta) v(    4 \eta  )  <1,\qquad 0>\eta + \pi^2 <<1.
\end{equation}
Noting that 
\begin{equation}
g(\eta), \frac{1}{ v(    4 \eta  )}\longrightarrow 0 \qquad \text{as } \eta \to   -\pi^2_+,
\end{equation}
we study the limit of the derivatives. We find  
\begin{align}
\frac{d}{d \eta}  g(\eta) & = \frac{ \sqrt{\eta} \cosh \sqrt{\eta} - \sinh \sqrt{\eta}}{2 \eta^{{3}/{2}} } \longrightarrow \frac{1}{2 \pi ^2}  
,  \\
\frac{d}{d \eta} \frac{1}{ v(    4 \eta  )} &= \frac{\sqrt{\frac{3}{2}}\, \Big(\!-5 \sqrt{\eta } \sinh \left(2 \sqrt{\eta }\right)+2 (\eta +2) \cosh \left(2 \sqrt{\eta }\right)-4 \Big)}{4 \eta  \sqrt{\eta ^2 \left(\sqrt{\eta } \sinh \left(2 \sqrt{\eta }\right)-\cosh \left(2 \sqrt{\eta }\right)+1\right)}} \longrightarrow +\infty  
,
\end{align}
as $\eta \to   -\pi^2_+$. Therefore, applying L'H\^opital's rule yields 
\begin{equation}
g(\eta)v(    4 \eta  ) \longrightarrow 0 \qquad \text{as } \eta \to   -\pi^2_+,
\end{equation}
which in turn implies \eqref{eq:asymp_v_b} and thus \eqref{eq:asymp_v}. Finally, \eqref{eq:asymp_v_big}-\eqref{eq:asymp_v}, together with the continuity of $v$ and Weierstrass extreme value  theorem, proves \eqref{eq:est_u}. 

Eventually, analogous arguments allow to prove \eqref{eq:def_h}, using the representation
\begin{equation}
{ x_1^2}  \partial_{x_1 x_1} {\bf u}(z;w) = \Big(  \rho \big(g^{-1}\big)'(\rho) + \rho^2 \big(g^{-1}\big)''(\rho)  \Big) v'\big(   4 g^{-1}(\rho)   \big)  + \Big( 2 \rho \big(g^{-1}\big)'(\rho) \Big)^2 v''\big(   4 g^{-1}(\rho)   \big), \qquad \rho=\frac{1}{{\bf h}(z;w)}.
\end{equation}
The details are left to the reader for the sake of brevity.
\end{proof}

%
%
%
%

\subsection{Proof of Proposition \ref{prop:delta_Dirac}}


\begin{proof}[Proof of Proposition \ref{prop:delta_Dirac}]
For 
$T>0$ 
we set 
\begin{equation}\label{int_H1}
I(T
):=\int_{(1,0)\prec \xi} g\big( {\bf h}(\id;T,\xi) \big) H_1(\id;T,\xi)  \varphi(\xi) \dd \xi . 
\end{equation}
By \eqref{eq:rep_psi}-\eqref{eq:G}, and by the change of variables
\begin{equation}
\begin{cases}
 \chi = \sqrt{\xi_1}                 \\  
 \eta = {\bf h}(\id;T,\xi)
\end{cases},\qquad (1,0)\prec \xi,
\end{equation}
the integrals in \eqref{int_H1} can be written as
\begin{equation}
I(T
)= \frac{\sqrt{12}}{\pi  T}
\int\limits_{\R^+\times \R^+} \eta^{-2} \chi^{-2} g(\eta)
\exp\bigg( -2\,\frac{  \eta\big(\chi + \chi^{-1} -2\big) +    G( \eta ) }  { T}    \bigg) \bar\varphi(\chi,\eta) \dd \eta \dd \chi,
\end{equation}
with
\begin{equation}
\bar\varphi(\chi,\eta) = \varphi \bigg(  \chi^2  ,  \frac{T \chi}{\eta}   \bigg).
\end{equation}
Employing now $G(1)=G'(1)=0$ together with $G''(1)=6$ (see \eqref{eq:def_Gn}-\eqref{eq:def_a_b} with $a_2 = 3$), we can write the $2$nd order Taylor expansion 
\begin{equation}
\eta\big(\chi + \chi^{-1} -2\big) +    G( \eta ) =  (\chi-1)^2 + 3 (\eta-1)^2 + R(\chi,\eta),  
\end{equation}
where $R$ is a continuous function such that 
\begin{equation}
R(\chi,\eta) = o(\chi^2) + o(\eta^2),\qquad \text{as } (\chi,\eta)\to (1,1).
\end{equation}
Thus we have
\begin{equation}
I(T) = I_1(T)+I_2(T)+I_3(T),
\end{equation}
with
\begin{align}
I_1(T) &= \frac{\sqrt{12}}{\pi  T}
\int_{([\frac{1}{2},\frac{3}{2}])^2} \eta^{-2} \chi^{-2} g(\eta)
\exp\bigg(\! -\frac{  4(\chi-1)^2 + 12 (\eta-1)^2 }  {2  T}    \bigg) \bar\varphi(\chi,\eta) \dd \eta \dd \chi,\\
I_2(T) &= \frac{\sqrt{12}}{\pi  T}
\int_{([\frac{1}{2},\frac{3}{2}])^2} \eta^{-2} \chi^{-2} g(\eta)
\exp\bigg(\! -\frac{  4(\chi-1)^2 + 12 (\eta-1)^2 }  {2  T}    \bigg) \Big(e^{-\frac{2 R(\chi,\eta) }  { T}} -1  \Big) \bar\varphi(\chi,\eta) \dd \eta \dd \chi,\\
I_3(T) &= \frac{\sqrt{12}}{\pi  T}
\int_{(\R^+)^2\setminus([\frac{1}{2},\frac{3}{2}])^2} \eta^{-2} \chi^{-2} g(\eta)
\exp\bigg(\! -2\,\frac{  \eta\big(\chi + \chi^{-1} -2\big) +    G( \eta ) }  { T}    \bigg) \bar\varphi(\chi,\eta) \dd \eta \dd \chi.
\end{align}
We now study each term separately. Regarding $I_1$, it is enough to observe that the function
\begin{equation}
(\chi,\eta)\mapsto\frac{\sqrt{12}}{\pi  T} \exp\bigg(\! -\frac{  4(\chi-1)^2 + 12 (\eta-1)^2 }  {2  T}    \bigg)
\end{equation}
is a Gaussian probability density. Owing to the continuity of $g$ and $\varphi$, and to the hypothesis $g(1)=1$, it is standard to show that 
\begin{equation}
\lim_{T\to 0^+} I_1(T) = \bar\varphi(1,1) = \varphi(1,0).
\end{equation}
We now address $I_2$. By using
\begin{equation}
e^{-\frac{2 R(\chi,\eta) }  { T}} -1 = o\big((\chi-1)^2\big) + o\big((\eta-1)^2\big),\qquad \text{as } (\chi,\eta)\to (1,1),
\end{equation}
 it is again standard to prove that
 \begin{equation}
\lim_{T\to 0^+} I_2(T) = 0.
\end{equation}
We now employ
\begin{equation}
\frac{1}{T} \exp\bigg(\! -2\,\frac{  \eta\big(\chi + \chi^{-1} -2\big) +    G( \eta ) }  { T}    \bigg) \longrightarrow 0 \quad \text{as } T\to 0^{+},\qquad (\chi,\eta)\neq (1,1),
\end{equation}
together with 
\begin{equation}
\eta\big(\chi + \chi^{-1} -2\big) +    G( \eta ) \geq c>0,\qquad (\chi,\eta)\in (\R^+)^2\setminus([{1}/{2},{3}/{2}])^2
\end{equation}
and the fact that $\varphi\in C_b(D)$, to apply Lebesgue dominated convergence theorem so as to obtain 
 \begin{equation}
\lim_{T\to 0^+} I_3(T) = 0.
\end{equation}
This proves \eqref{eq:delta_H1} and concludes the proof. 
\end{proof}

%
%
%

\section{Numerical evidences for Key Inequalities \ref{prop:estimate_CK}}\label{sec:numerical_evidence}

In this section we collect numerical evidence in favor of the validity of the Key Inequalities \ref{prop:estimate_CK}. Unfortunately, so far, we were unable to provide a full mathematical proof. 
However, given the stability and the extensiveness of the numerical tests that we performed, we feel comfortable to conjecture that the Key Inequalities \ref{prop:estimate_CK} are true statements. The completion of a rigorous proof seems challenging and remains an open problem, which we defer to further research.

From the numerical point of view, the two issues that one has to overcome in order to verify \eqref{eq:key_est_1}-\eqref{eq:key_est_2} are the following:
\begin{enumerate}
\item[a)] The cost function $\Psi$ involves the inverse of a trigonometric function. Thus the complexity of the numerical inversion adds up to the complexity of the numerical integration. The result is a loss of precision and an increase of the computational time.
\item[b)] The integrals are to be computed on unbounded domains. This makes the numerical integration harder to manage. 
\end{enumerate}

To tackle Point a) above, we utilize the approximation of the cost function given by
\begin{equation}
\tilde{\Psi}_N (z,w)  =\frac{4}{T-t}\bigg[ {\bf h}(z,w)\bigg(\sqrt{\frac{y_1}{x_1}} + \sqrt{\frac{x_1}{y_1}} -2\bigg) +  \sum_{n=2}^{N}  \tilde{G}_n\big( {\bf h}(z,w) \big) \bigg],  \qquad N\geq 2,
\end{equation}
where
\begin{equation}\label{eq:def_Gn_tilde}
\tilde{G}_n(\eta): = a_n {\bf 1}_{]1,\infty[} (\eta)  \frac{ (\eta -1)^n}{\eta^{n-1} } +\tilde{b}_n {\bf 1}_{]0,1[}(\eta)  \frac{ (-\log \eta )^n}{\big(1-\log \eta\big)^{n-1}} ,
\end{equation}
and where the coefficients $a_n,\tilde{b}_n$ are recursively determined by solving the equations
\begin{equation}\label{eq:def_a_b_tilde}
\frac{\dd^n}{\dd \eta^n}{G}(\eta)\Big|_{\eta=1} = \frac{\dd^n}{\dd\eta^n} \sum_{k=2}^{n}  \tilde{G}_k (\eta ) \Big|_{\eta=1}, \qquad n\geq 2 .
\end{equation}
The $N$-th order approximation $\tilde{\Psi}_N$ is clearly very close to the one in Definition \ref{def:N_approx}. In fact, the functions $\tilde{G}_n$ coincide with the functions $G_n$ on $]1,\infty[$, while on $]0,1[$ the two are only slightly different. The choice $\tilde{G}_n$ clearly does not preserve the asymptotic properties enjoyed by $G_n$ described in Section \ref{sec:novel_repres}. However, the advantage of using here $\tilde{\Psi}_N$ in place of $\Psi_N$ is that the former approximates $\Psi$ from below, namely
\begin{equation}\label{eq:Psitil_ineq}
\tilde{\Psi}_N (z,w) \leq \Psi(z,w),\qquad z,w\in \R\times D, \ z\prec w.
\end{equation}
Indeed, in Section \ref{sec:novel_repres} we have already showed that 
\begin{equation}\label{eq:GtillessG}
\sum_{n=2}^{N}  \tilde{G}_n(\eta) < G(\eta)
\end{equation}
for any $\eta>1$. This comes from the fact that the coefficients $a_n$ are positive and $G_n$ converges point-wise to $G$. On the other hand, the plots in Figure \ref{fig:ste3} provide numerical evidence of the fact that \eqref{eq:GtillessG} is satisfied for $\eta\in]0,1[$ too. In particular, we test it for $N=50$, which is the order used in the numerical tests below. 
\begin{figure}[htb]
\centering
\includegraphics[width=1\textwidth,height=0.28\textheight]{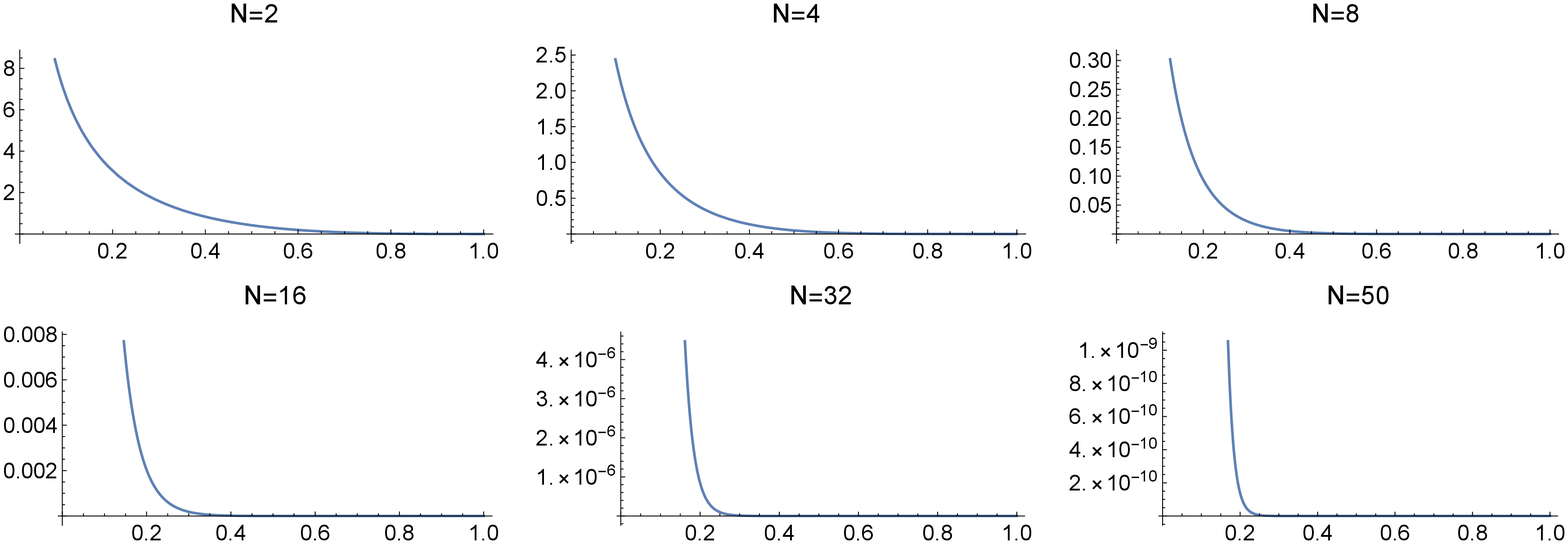} 
\caption{Plot of the difference $G(\eta)-\sum_{n=2}^N b_n \tilde{G}_n(\eta)$, with $\tilde{G}_n$ and $\tilde{b}_n$ as defined by \eqref{eq:def_Gn_tilde}-\eqref{eq:def_a_b_tilde}.} 
\label{fig:ste3}
\end{figure} 

In order to address Point b) above, we make use of the Dirac Delta property of the function $H_1$ proved in Proposition \ref{prop:delta_Dirac}, in order to reduce the integration domains to bounded sets. We have the following 
\begin{proposition}\label{prop:reduction}
The Key Inequalities \ref{prop:estimate_CK} are satisfied if and only if there exists a constant $\kappa_{\tau}>0$ such that 
\begin{align}
\int_{x\prec \xi \prec (1,0)} {\bf 1}_{D(t,s,x)}(\xi) \sqrt{{\bf h}(z;s,\xi) } H_1(z;s,\xi) \sqrt{ {\bf h}(s,\xi;\id)}  H_1(s,\x;\id)  \dd \xi & \leq \kappa_{\tau}\, \sqrt{{\bf h}(z; \id)} H_1(z,\id),\qquad
\label{eq:key_est_1_bis} \\
\int_{x\prec \xi \prec (1,0)} {\bf 1}_{D'(t,s,x)}(\xi) \tilde{g}\big({{\bf h}(z;s,\xi) }\big) H_1(z;s,\xi) \tilde{g}\big({ {\bf h}(s,\xi;\id)}\big)  H_1(s,\x;\id)  \dd \xi & \leq \kappa_{\tau}\, \tilde{g}\big({{\bf h}(z;\id)}\big) H_1(z,\id),\qquad
\label{eq:key_est_2_bis}
\end{align}
for any $s\in]t,0[$, and for any $z=(t,x)
\in \R\times D$ such that 
$z\prec \id$ and $t>-\tau$,
where $D(x),D'(x)\subset D$ are the bounded domains
\begin{align}
  D(t,s,x)      &:= \Big\{ \xi\in D :\Big( \sqrt{{\bf h}(z;s,\xi) }  H_1(z;s,\xi) \Big) \wedge \Big( \sqrt{ {\bf h}(s,\xi;\id)}  H_1(s,\x;\id) \Big) >  \sqrt{{\bf h}(z; \id)} H_1(z,\id) \Big\} , \qquad \label{eq:Dtsx} \\
   D'(t,s,x)    & := \Big\{ \xi\in D :\Big( \tilde{g}\big({{\bf h}(z;s,\xi) }\big)  H_1(z;s,\xi) \Big) \wedge \Big( \tilde{g}\big({ {\bf h}(s,\xi;\id)}\big)  H_1(s,\x;\id) \Big) >  \tilde{g}\big({{\bf h}(z; \id)}\big) H_1(z,\id) \Big\}   .
\end{align}
\end{proposition}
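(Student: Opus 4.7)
The proof is of ``if and only if'' type, so I would handle the two implications separately. The ``only if'' direction is immediate: since both $D(t,s,x)$ and $D'(t,s,x)$ are contained in $\{\xi: x\prec\xi\prec(1,0)\}$ and the integrands are pointwise non-negative, the bounds \eqref{eq:key_est_1}--\eqref{eq:key_est_2} trivially imply \eqref{eq:key_est_1_bis}--\eqref{eq:key_est_2_bis} with $\kappa_\tau=C_\tau$.

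For the substantive ``if'' direction, the plan is to split the integral in \eqref{eq:key_est_1} into the piece over $D(t,s,x)$ and the piece over its complement inside the feasible set. The former is bounded directly by the hypothesis \eqref{eq:key_est_1_bis}. The key observation for the latter is that, by the very definition of $D(t,s,x)$, on its complement at least one of the two factors
\begin{equation*}
F_1(\xi):=\sqrt{{\bf h}(z;s,\xi)}\,H_1(z;s,\xi), \qquad F_2(\xi):=\sqrt{{\bf h}(s,\xi;\id)}\,H_1(s,\xi;\id)
\end{equation*}
is pointwise dominated by $M:=\sqrt{{\bf h}(z;\id)}\,H_1(z;\id)$. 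Writing the complement as $A_1\cup A_2$ with $A_i=\{F_i\le M\}$, one obtains
\begin{equation*}
\int_{A_1} F_1F_2\,d\xi\le M\int F_2\,d\xi, \qquad \int_{A_2} F_1F_2\,d\xi\le M\int F_1\,d\xi,
\end{equation*}
where the right-hand integrals are extended to the full feasible set. It therefore suffices to establish, for some $M_\tau$ depending only on $\tau$, the uniform bounds $\int F_1\,d\xi\le M_\tau$ and $\int F_2\,d\xi\le M_\tau$ for all admissible parameters $t>-\tau$, $s\in(t,0)$, $x\in D$. The estimate \eqref{eq:key_est_2} then follows by the exact same argument with $\sqrt{h}$ replaced by $\tilde g(h)=\sqrt{h}+h$.

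The main technical obstacle is proving these uniform bounds. Each of $\int F_1\,d\xi$ and $\int F_2\,d\xi$ is, via the left-invariance identities of Remark \ref{rem:invariance} and the associated change of variables, reducible to an integral of the form appearing in Proposition \ref{prop:delta_Dirac} with $\tilde v(h)=\sqrt{h}$ and $\varphi\equiv 1$. That proposition guarantees convergence to $\varphi(1,0)=1$ as the relevant time-gap tends to $0^+$, yielding boundedness for small time-gaps. To promote this to a uniform bound over the full range of time-gaps in $(0,\tau]$, one combines the smoothness and exponential decay of the integrand (which yields continuity in the time parameter for values bounded away from zero) with a more careful inspection of the Laplace-type decomposition $I(T)=I_1(T)+I_2(T)+I_3(T)$ used in the proof of Proposition \ref{prop:delta_Dirac}: that decomposition in fact produces uniform-in-$T$ control on compact subintervals, not merely a pointwise limit at $T=0$. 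For \eqref{eq:key_est_2_bis} the same scheme applies with $\tilde v(h)=\sqrt{h}+h$, which is still bounded by a power function and satisfies $\tilde v(1)=2$, so Proposition \ref{prop:delta_Dirac} remains applicable up to an overall multiplicative constant.
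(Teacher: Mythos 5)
Your proposal is correct and follows essentially the same route as the paper: the complement of $D(t,s,x)$ is handled by bounding the product of the two kernels by $\sqrt{{\bf h}(z;\id)}\,H_1(z;\id)$ times their sum, and the resulting integrals of each single kernel are reduced, via the invariance of Remark \ref{rem:invariance}, to quantities depending only on the time gap, which are then controlled through Proposition \ref{prop:delta_Dirac} with $\tilde v(h)=\sqrt h$ (resp.\ $\tilde v(h)=\sqrt h + h$) and $\varphi\equiv 1$. Your remark that the small-time limit must be supplemented by uniform control on the whole interval $]0,\tau]$ is a point the paper passes over more quickly, but it is the same argument in substance.
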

\begin{proof}
By \eqref{eq:Dtsx} we have 
\begin{align}
& \int_{x\prec \xi \prec (1,0)} {\bf 1}_{D\setminus D(t,s,x)}(\xi)  \sqrt{{\bf h}(z;s,\xi) }  H_1(z;s,\xi) \sqrt{ {\bf h}(s,\xi;\id)}  H_1(s,\x;\id) \,  \dd \xi \\
& \leq  \sqrt{{\bf h}(z; \id)} H_1(z,\id) \bigg(    \int_{x\prec \xi }  \sqrt{{\bf h}(z;s,\xi) }  H_1(z;s,\xi) \dd \xi    
  +   \int_{ \xi \prec (1,0)}  
  \sqrt{ {\bf h}(s,\xi;\id)}\, {H_1(s,\x;\id)}   \dd \xi \bigg) .  \label{eq:integrali}
\end{align}
Now, by Proposition \ref{prop:delta_Dirac} we obtain
\begin{equation}
\int_{x\prec \xi }  \sqrt{{\bf h}(z;s,\xi) }  H_1(z;s,\xi)\, \dd \xi 
= \int_{(1,0)\prec \xi } \sqrt{{\bf h}(\id;s-t, \xi)} H_1(\id;s-t, \xi) \, \dd \xi \longrightarrow 1,\qquad \text{as }s-t\to 0^+, 
\end{equation}
and analogously 
\begin{equation}
\int_{ \xi \prec (1,0) }   \sqrt{ {\bf h}(s,\xi;\id)}  \,{H_1(s,\x;\id)}\, \dd \xi 
 = \int_{(1,0)\prec \xi } \sqrt{{\bf h}(\id;-s, \xi)} H_1(\id;-s, \xi) \, \dd \xi \longrightarrow 1,\qquad \text{as }s\to 0^-.
\end{equation}
In particular, we have proved that the two integrals in \eqref{eq:integrali} are bounded, for any $-\tau<t<s<0$ and $x\in D$, by a constant that only depends on $\tau$. Therefore, \eqref{eq:key_est_1_bis} is equivalent to \eqref{eq:key_est_1}. The proof that \eqref{eq:key_est_2_bis} is equivalent to \eqref{eq:key_est_2} is identical. 
 \end{proof}

\subsection{Numerical tests and results}

In force of \eqref{eq:Psitil_ineq} and of Proposition \ref{prop:reduction} above, in order to prove the Key Inequalities \ref{prop:estimate_CK} it is enough to check that  
\begin{align}
I_1(t,s,x):=\frac{\int_{]x_2,0[ \times ]0,\bar\xi[}  \sqrt{{\bf h}(z;s,\xi) } \tilde{H}_1(z;s,\xi) \sqrt{ {\bf h}(s,\xi;\id)}  \tilde{H}_1(s,\x;\id)  \dd \xi}
{\sqrt{{\bf h}(z; \id)} H_1(z,\id)} & \leq \kappa_{\tau},
\label{eq:key_est_1_ter} \\
I_2(t,s,x):= \frac{\int_{]x_2,0[ \times ]0,\bar{\bar\xi}[}  
\big(\sqrt{\bf h} + {\bf h} \big) (z;s,\xi)  \tilde{H}_1(z;s,\xi)\, \big(\sqrt{\bf h} + {\bf h} \big)(s,\xi;\id)  \tilde{H}_1(s,\x;\id)  \dd \xi}{\big(\sqrt{\bf h} + {\bf h} \big)(z;\id) H_1(z,\id)} & \leq \kappa_{\tau},
\label{eq:key_est_2_ter}
\end{align}
for any $z=(t,x)
\in \R\times D$ such that 
$z\prec \id$ and $t>-\tau$, where $\tilde{H}_1$ is defined as $H_1$ with $\Psi$ replaced by $\tilde\Psi_N$ for a given $N>2$, and where $\bar\xi , \bar{\bar\xi}>0$ are such that
\begin{equation}
]x_2,0[ \times ]0,\bar\xi[\, \supset D(t,s,x)\,\cap  \big(\, ]x_2,0[ \times \R^+ \big) ,\qquad ]x_2,0[ \times ]0,\bar{\bar\xi}[\, \supset  D'(t,s,x)\,\cap  \big(\, ]x_2,0[ \times \R^+ \big).
\end{equation}
It is a direct computation to show that a possible choice for such $\bar\xi , \bar{\bar\xi}$ is given by 

\begin{align}
\bar\xi &= \max\bigg\{  1,     -  \frac{x_2}{4} \Big(\Psi (t,x_1,x_2; \id)-3 \log \frac{t-s}{t}+\log 2\Big)                       ,       \frac{x_2}{8}    v^*  \Big(  \frac{8x_1}{x_2} 
e^{-2\big(  \Psi (t,x_1,x_2; \id)   - 3\log \frac{s}{t}   +\log 2  \big)  + \frac{8x_1}{x_2}}            \Big)             \bigg\}, \\ \label{eq:xibar} \\
\bar{\bar\xi} &= \max\bigg\{  1,     -  \frac{x_2}{4} \Big(\Psi (t,x_1,x_2; \id)-3 \log \frac{t-s}{t}\Big)                       ,       \frac{x_2}{8}    v^*  \Big(  \frac{8x_1}{x_2} 
e^{-2\big(  \Psi (t,x_1,x_2; \id)   - 3\log \frac{s}{t}     \big)  + \frac{8x_1}{x_2}}            \Big)  ,     \\
 \label{eq:xibarbar}
&                     \qquad   \qquad     \frac{x_2}{4}    v^*  \Big(  \frac{4x_1}{x_2} 
e^{-\big(  \Psi (t,x_1,x_2; \id)   - 3\log \frac{s}{t}     \big)  + \frac{4x_1}{x_2}}            \Big)    \bigg\},
\end{align}
with $v^*(\nu)$ being the lowest real solution\footnote{The equation $\nu = v e^v$ has two negative real solutions if $\nu = c \,a e^a $ with $a<0$ and $c\in]0,1[$. Thus the values of $v^*$ in \eqref{eq:xibar}-\eqref{eq:xibarbar} are well-defined.} to $\nu = v e^v$. The scope for turning the domains of integration into rectangles is facilitating the numerical integration.


We provide numerical evidence of the fact that \eqref{eq:key_est_1_ter}-\eqref{eq:key_est_2_ter} are satisfied for $\tau=1$, with $\kappa_{\tau} \approx 2.5$ and $N=50$. We test the inequalities for multiple choices of the parameters $s, t,  x $, generated at random  
according to the following distributions:
\begin{equation}\label{eq:distrib_unif}
t \sim \text{Unif}_{[-1, 0]},\qquad s \sim \text{Unif}_{[t, 0]},
\end{equation}
and 
\begin{equation}\label{eq:distrib_lognormal}
x_1 = \chi^2 ,\qquad x_2 = \frac{t \chi }{\eta } , \qquad \text{with } (\chi,\eta) \sim \text{LogNorm}_{0, 2} \otimes \text{LogNorm}_{0,2}.
\end{equation}
The distribution above is justified by the representation \eqref{eq:rep_psi}, which can be written here as 
\begin{equation}
\Psi(t,x;\id) =\frac{4}{- t}\bigg( \eta\Big(\chi + \frac{1}{\chi} -2\Big) +    G( \eta ) \bigg),\qquad \eta = {\bf h}(t,x;\id) = \frac{t \sqrt{x_1}}{x_2}, \ \chi = \sqrt{x_1},
\end{equation}
and which shows that ${\Psi}$ reaches its minimum at $(\chi,\eta)=(1,1)$. 
We generated $2\times 10^7$ vectors $(s, t,  x) $ and computed the integrals in the LHS of \eqref{eq:key_est_1_ter}-\eqref{eq:key_est_2_ter} using Monte Carlo integration. In Table \ref{tab:max} we report the maximum value of $I_1(t,s,x),I_2(t,s,x)$ and the corresponding $\arg\max$ obtained for the batch of variables. In Figure \ref{fig:ste4} we plot the $2\times 10^7$ samples of the pair $(\log\eta,\log\chi)$ generated at random with distribution $\mathcal{N}_{0,2} \otimes \mathcal{N}_{0,2}$, emphasizing the two argmax at which the maxima of $I_1$ and $I_2$ in Table \ref{tab:max} are attained. 

The results do not present evidence of the fact that the functions $I_1$ and $I_2$ are unbounded. Note that, due to the chosen distributions for $(\eta,\chi)$ and $(t,s)$, the values of $I_1$ and $I_2$ were computed in a region that includes the extreme tails of the exponential kernel $e^{-\Psi(t,x;\id)/2}$, and hence in a region where the denominators in \eqref{eq:key_est_1_ter}-\eqref{eq:key_est_2_ter} can be very small. For instance, for $t=-1/2,s=-1/4$ and $\chi=\xi = e^8$ one has $e^{-\Psi(t,x;\id)/2}\approx3.24\times10^{-126}$.


 The numerical integration was performed using Wolfram Mathematica built-in numerical integration routine \verb+NIntegrate+ with the method \verb+AdaptiveMonteCarlo+. The Mathematica notebook used to generate the results reported above can be found in the supplementary material. 
 \begin{table}
\centering
\begin{tabular}{c|ccccccc}
$n$ & $t^*$  &  $s^*$  &  $x^*_1$ & $x^*_2$  & $\log\chi^*$  & $\log\eta^*$  &  $I_n(t^*,s^*,x^*)$     \\
\hline
1&      -0.81948 & -0.54040 & 43.64581 & -0.12497 & 1.88805  & 3.76863  & 1.74841
\\
2 &      -0.78426 & -0.67014 & 0.00002 & -0.00605 & -5.52146 & -0.54472 & 2.48050  
\end{tabular}
    \caption{The maxima $I_n(t^*,s^*,x^*) = \max_{t,s,x}I_n(t,s,x)$ and 
    relative $\text{argmax}$, with $t,s,x$ ranging over a batch counting $2\times 10^7$ samples generated at random in accordance with \eqref{eq:distrib_unif}-\eqref{eq:distrib_lognormal}.
    }
\label{tab:max}
\end{table}


\begin{figure}[htb]
\centering
\includegraphics[width=0.7\textwidth,height=0.3\textheight]{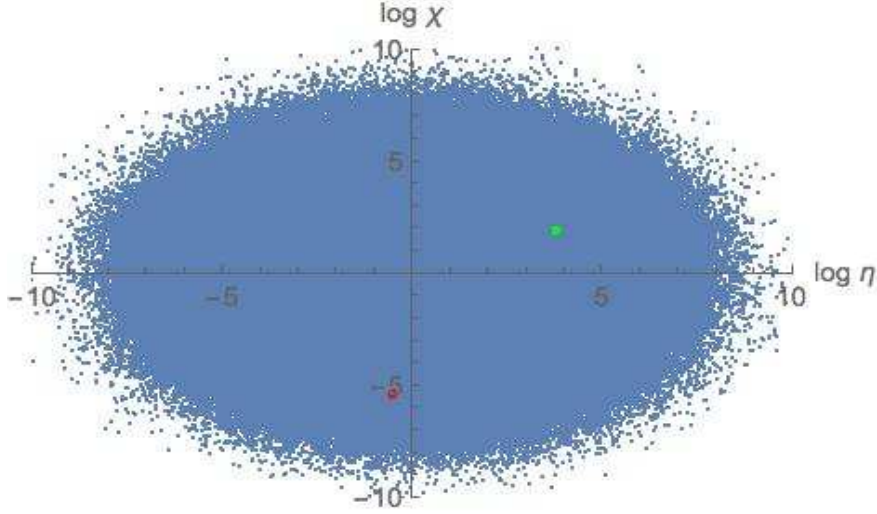} 
\caption{Blue dots: $2\times 10^7$ samples of the pairs $(\log\eta,\log\chi)$ generated at random with distribution $\mathcal{N}_{0,2}\otimes \mathcal{N}_{0,2}$. Green dot: pair $(\log\eta^*,\log\chi^*)$ at which the maximum $I_1$ in Table \ref{tab:max} is attained. Red dot: pair $(\log\eta^*,\log\chi^*)$ at which the maximum $I_2$ in Table \ref{tab:max} is attained.} 
\label{fig:ste4}
\end{figure}

\appendix

\section{A topological lemma}\label{sec:app_top_lemma}

Let $B = \big\{ (x,y) \in \R^2 \mid x^2 + y^2 < 1 \big\}$ denote the unit open ball of $\R^2$, and let $f:\overline{B} \to \R^{2}$ be a continuous map such that $f|_{\partial B}$ is a homeomorphism. In accordance with the Jordan-Schoenflies separation theorem, in the sequel $\internal$ will denote the ``inside" of $f(\partial B)$, meaning the bounded set of $\R^{2}$, whose border is $f(\partial B)$, homeomorphic to an open ball. We will also denote by $\external$ the ``outside" of $f(\partial B)$, namely the complementary of $f(\partial B)\cup\internal$, which is homeomorphic to the complementary of a closed ball.

\begin{lemma}\label{lem:lem_topol}
Let $B$ be an open ball of $\R^2$, and let $f:\overline{B} \to \R^{2}$ be such that: 
\begin{itemize}
\item[(i)] $f$ is a local homeomorphism;
\item[(ii)] $f|_{\partial B}$ is a homeomorphism.
\end{itemize}
Then $f|_B$ is a homeomorphism between $B$ and $\internal$.
\end{lemma}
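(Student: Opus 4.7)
The plan is to prove the statement by combining a cardinality/counting argument with the topological degree. Set $\gamma := f(\partial B)$, which is a Jordan curve by hypothesis (ii), so by Jordan-Schoenflies one has $\R^2 \setminus \gamma = f(\partial B)^{*} \sqcup f(\partial B)_{*}$, using the paper's notation for the inside and outside. The goal is to show that $f|_B : B \to f(\partial B)^{*}$ is a continuous, open, and bijective map, and then conclude by the standard fact that such a map is automatically a homeomorphism. Openness is immediate from (i), and continuity is given.

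First I would study the counting function $N(q) := \#\, f^{-1}(q)$. Because $f$ is a local homeomorphism on the compact set $\overline{B}$, each fiber is discrete, hence finite; moreover for $q \notin \gamma$ the preimage lies entirely in $B$. A routine argument using pairwise disjoint local-homeomorphism neighborhoods around the finitely many preimages, together with the compactness of their complement in $\overline{B}$, shows that $N$ is locally constant on $\R^2 \setminus \gamma$. Denote its values by $n_{\mathrm{int}}$ on $f(\partial B)^{*}$ and $n_{\mathrm{ext}}$ on $f(\partial B)_{*}$. Since $f(\overline{B})$ is compact and the outside component is unbounded, $N$ vanishes far away, giving $n_{\mathrm{ext}} = 0$. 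On the other hand, for any $y_0 \in \partial B$ a local-homeomorphism chart maps a relative half-disk onto one connected side of $\gamma$ near $f(y_0)$; that side cannot be the outside (else $n_{\mathrm{ext}} \geq 1$), hence $f$ must push boundary half-neighborhoods into the inside, and consequently $n_{\mathrm{int}} \geq 1$.

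Next I would establish $f(B) = f(\partial B)^{*}$. No interior point $x \in B$ can satisfy $f(x) \in \gamma$, for the local homeomorphism at $x$ would produce an open neighborhood of $f(x)$ meeting both sides of $\gamma$, and in particular the outside, again contradicting $n_{\mathrm{ext}} = 0$. Together with $n_{\mathrm{ext}} = 0$ this forces $f(B) \subseteq f(\partial B)^{*}$. Equality then follows by connectedness of $f(\partial B)^{*}$: $f(B)$ is a non-empty open subset, and its complement inside $f(\partial B)^{*}$ coincides with $f(\partial B)^{*} \setminus f(\overline{B})$, which is open because $f(\overline{B})$ is compact, hence closed; the only way such a decomposition is compatible with connectedness is $f(B) = f(\partial B)^{*}$.

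Finally, to upgrade surjectivity onto $f(\partial B)^{*}$ to a bijection I would invoke the Brouwer degree. For any $p_0 \in f(\partial B)^{*}$, since $p_0 \notin f(\partial B)$, the degree $\deg(f,B,p_0)$ is well defined and equals the winding number of $f|_{\partial B}$ around $p_0$; as $f|_{\partial B}$ parametrizes the Jordan curve $\gamma$ exactly once and $p_0$ lies strictly inside it, this winding number equals $\pm 1$. At the same time $\deg(f,B,p_0) = \sum_{x \in f^{-1}(p_0)} \deg_x f$, with $\deg_x f = \pm 1$ at each preimage since $f$ is a local homeomorphism between open subsets of the oriented plane. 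The local sign is locally constant in $x$, hence constant on the connected set $B$, so all summands carry the same sign and $n_{\mathrm{int}} = |f^{-1}(p_0)| = |\deg(f,B,p_0)| = 1$. The main obstacle, in my view, is precisely this injectivity step: the counting argument alone forces $n_{\mathrm{int}}$ to be a positive integer, and one genuinely needs a global topological invariant combined with the orientation-consistency on the connected domain $B$ to pin it down to $1$. Once injectivity is established, $f|_B$ is a continuous open bijection onto $f(\partial B)^{*}$, and hence a homeomorphism.
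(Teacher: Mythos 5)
Your argument is correct in substance, but it follows a genuinely different route from the paper's. The paper first shows $f(\overline B)=\overline{\internal}$ and $f(B)=\internal$ by a compactness/connectedness argument, then observes that $f|_B:B\to\internal$ is \emph{proper}, and concludes at one stroke with the Hadamard--Caccioppoli global inversion theorem (a proper local homeomorphism between simply connected open subsets of $\R^n$ is a global homeomorphism), so injectivity is never touched directly. You instead run a sheet-counting argument: $N(q)=\#f^{-1}(q)$ is locally constant off $\gamma=f(\partial B)$ (this is exactly where the compactness of $\overline B$, i.e.\ properness, enters), equals $0$ on $\external$, and is then pinned to $1$ on $\internal$ by Brouwer degree, using that $\deg(f,B,p_0)$ equals the winding number of the Jordan curve $f|_{\partial B}$ about an interior point (hence $\pm1$), that the degree is the sum of local indices, and that the local index of a local homeomorphism is $\pm1$ with locally constant sign on the connected set $B$. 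What each approach buys: the paper's proof delegates all the global topology to one covering-type theorem and needs no orientation or degree machinery; yours is self-contained at the level of planar degree theory and gives slightly more (the explicit sheet numbers $N\equiv 1$ inside, $0$ outside), but it leans on several classical facts --- winding number $\pm1$ for a Jordan curve about an interior point, orientation consistency of local indices --- whose proofs are of comparable depth to Hadamard--Caccioppoli, so it is not more elementary. Two small remarks: the half-disk step at $y_0\in\partial B$ is both imprecise (the image of a boundary half-neighborhood need not be ``one side of $\gamma$'', and hypothesis (i) at boundary points must be read with care) and unnecessary, since $n_{\mathrm{int}}\geq 1$ already follows from the openness of $f(B)$ together with $n_{\mathrm{ext}}=0$, or directly from $|\deg(f,B,p_0)|=1$; and your connectedness argument for $f(B)=\internal$ duplicates what the constancy of $N$ on $\internal$ already gives. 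Neither affects the validity of the proof.
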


\begin{proof}
We first prove that 
\begin{equation}\label{eq:surjective}
f(B) = \internal.
\end{equation}
Assume that $\external\cap f(\overline B) \neq \emptyset$. Since $f(\overline B)$ is a compact set, we also have $\external\cap \big(f(\overline B) \big)^c\neq \emptyset$. Therefore, being $\external$ an open connected set, we have $\external\cap \partial \left( f(\overline B) \right) \neq \emptyset$. Let $y\in \external\cap \partial \left( f(\overline B) \right)$, again by compactness of $f(\overline B)$ there exists $x \in \overline B$ such that $f(x)=y$. Note that $x\in B$, as $y \in \external$. However, by (i), there exists a neighborhood $U$ of $x$ such that $f(U)$ is a neighborhood of $y$, which contradicts the fact that $y\in \partial \left( f(\overline B) \right)$. We thus proved that $f(\overline B) \subset \overline{\internal}$. 

An analogous argument shows that $f(\overline B) \supset \overline{\internal}$. Assume by contradiction that $f(\overline B) \cap  \external \neq \emptyset$. Then $\internal \cap \partial \left( f(\overline B) \right) \neq \emptyset$, and the conclusion follows. Thus,
\begin{equation}
f(\overline B) = \overline{\internal}.
\end{equation}
Moreover, if there were $x\in B$ such that $f(x)\in f(\partial B)$, then (i) would be violated because $f(\partial B) = \partial f(B)$. This proves \eqref{eq:surjective}. 

We now note that the compact subsets in the subspace topologies on $B$ and $\internal$ are all the closed subsets of $\R^2$ contained in $B$ and $\internal$, respectively. Therefore, $f|_B:B\to \internal$ is a proper function.

Now, in order to conclude the proof it is enough to apply Hadamard-Caccioppoli Theorem, a particular instance of which states that a local homeomorphism between two open and simply connected sets of $\R^n$ is a global homeomorphism if and only if it is a proper function. 
\end{proof}


%

\section{Proof of Lemma \ref{lem:HJB_eq}}\label{sec:proof_lemma_HJB}
\begin{proof}[Proof of Lemma \ref{lem:HJB_eq}]
First note that, by \eqref{eq:Psi_invariance}, it is enough to prove \eqref{eq:fund_identity_dim_n} for $w=\id$
. Now, we observe that it suffices to prove that
\begin{equation}\label{eq:HJB_inf}
0 = \inf_{\omega\in\R} \big\{  \omega^2 + \big( \omega x_1 \partial_{x_1} + Y \big) \Psi(z,\id)   \big\}, \qquad z=(t,x)\in \R\times D, \ z\prec \id .
\end{equation}
Indeed, the function
\begin{equation}
 \omega^2 + \big( \omega x_1 \partial_{x_1} + Y \big) \Psi(z,\id)  
\end{equation}
has a global minimum at $\omega = - \frac{x_1}{2}\partial_{x_1}\Psi(z,\id)$, which is 
\begin{equation}
-\Big( \frac{x_1}{2}\partial_{x_1}\Psi(z,\id)\Big)^2 + Y \Psi(z,\id).
\end{equation}

Consider now the following extended control problem. For any $z=(t,x)\in \R\times D$ with $z\prec \id$, and for any $x_0\in\R$,  find
\begin{equation}\label{eq:control1_ext}
 \bar{\Psi}(t,x_0,x_1,x_2) = \min_{\omega
}  \gamma_0(T),
\end{equation}
where the minimum is taken over all the controls $\omega\in L^2([t,T]) $ for which \eqref{eq:optimal_curves} with $w=\id$ are satisfied, and
\begin{align}\label{eq:optimal_curves_ext}
\dot{\gamma}_0(s) = \omega^2(s) ,\quad
t<s<T,&& 
\gamma_0(t) = x_0.
\end{align}
It is trivial to see that $\omega$ is optimal for this problem if and only if it is optimal for the problem \eqref{eq:control1}-\eqref{eq:optimal_curves}. Also, we have 
\begin{equation}
\bar{\Psi}(t,x_0,x_1,x_2) ={\Psi}(t,x_1,x_2;\id) + x_0. 
\end{equation}
Therefore, \eqref{eq:HJB_inf} is equivalent to
\begin{equation}
0 = \inf_{\omega\in\R} \big\{ \big( \omega^2 \partial_{x_0}\bar\Psi(t,x_0,x)+ \omega x_1 \partial_{x_1} + Y \big) \bar\Psi(t,x_0,x)   \big\}, \qquad z=(t,x)\in \R\times D, \ z\prec \id,\quad x_0\in\R .
\end{equation}
Eventually, the latter holds true by Theorem IV-4.1 in \cite{fleming2012deterministic}, which applies to the our problem as $\Psi(\cdot,\id)$ is smooth on its domain and the optimal control $\omega$ is piecewise continuous. Theorem IV-4.1 in \cite{fleming2012deterministic} also states \eqref{eq:control}, and this concludes the proof. 
\end{proof}

\bibliographystyle{siam}
\bibliography{Bibtex-Final}

\end{document}